\theoremstyle{plain}
\newtheorem{thm}{Theorem}[section]
\newaliascnt{cor}{thm}
\newaliascnt{prop}{thm}
\newaliascnt{lem}{thm}
\newtheorem{cor}[cor]{Corollary}
\newtheorem{coron}[cor]{Corollary}
\newtheorem{prop}[prop]{Proposition}
\newtheorem{lem}[lem]{Lemma}
\theoremstyle{definition}
\newaliascnt{defn}{thm}
\newaliascnt{asu}{thm}
\newaliascnt{con}{thm}
\newtheorem{defn}[defn]{Definition}
\newtheorem{asu}[asu]{Assumption}
\newcounter{stp}
\newcounter{stpi}
\newcounter{stpci}
\newcounter{stpiii}
\theoremstyle{remark}
\newaliascnt{rem}{thm}
\newaliascnt{exa}{thm}
\newaliascnt{masu}{thm}
\newaliascnt{nota}{thm}
\newaliascnt{sett}{thm}
\newtheorem{rem}[rem]{Remark}
\numberwithin{equation}{section}
\setlist[enumerate]{font = \normalfont}
\newcommand{\eps}{\varepsilon}
\newcommand{\atm}{\mathrm{atm}}
\newcommand{\ocn}{\mathrm{ocn}}
\newcommand{\ice}{\mathrm{ice}}
\renewcommand{\cor}{\mathrm{cor}}
\newcommand{\Cmr}{C_\mathrm{MR}}
\newcommand{\bR}{\mathbb{R}}
\newcommand{\bN}{\mathbb{N}}
\newcommand{\bE}{\mathbb{E}}
\newcommand{\bS}{\mathbb{S}}
\newcommand{\bA}{\mathbb{A}}
\newcommand{\obB}{\overline{\mathbb{B}}}
\newcommand{\rd}{\mathrm{d}}
\newcommand{\rb}{\mathrm{b}}
\newcommand{\mre}{\mathrm{e}}
\newcommand{\mri}{\mathrm{i}}
\newcommand{\srd}{\, \mathrm{d}}
\newcommand{\tu}{\Tilde{u}}
\newcommand{\tv}{\Tilde{v}}
\newcommand{\teps}{\Tilde{\eps}}
\newcommand{\tB}{\Tilde{B}}
\newcommand{\tBh}{\Tilde{B}_{\mathrm{h}}}
\newcommand{\tBa}{\Tilde{B}_{\mathrm{a}}}
\newcommand{\rC}{\mathrm{C}}
\newcommand{\rL}{\mathrm{L}}
\newcommand{\rW}{\mathrm{W}}
\newcommand{\rH}{\mathrm{H}}
\newcommand{\rB}{\mathrm{B}}
\newcommand{\rD}{\mathrm{D}}
\newcommand{\rX}{\mathrm{X}}
\newcommand{\rY}{\mathrm{Y}}
\newcommand{\rZ}{\mathrm{Z}}
\newcommand{\rBUC}{\mathrm{BUC}}
\newcommand{\rI}{\mathrm{I}}
\newcommand{\rII}{\mathrm{II}}
\newcommand{\rIII}{\mathrm{III}}
\newcommand{\rIV}{\mathrm{IV}}
\newcommand{\rLp}{\rL^p}
\newcommand{\cA}{\mathcal{A}}
\newcommand{\cL}{\mathcal{L}}
\newcommand{\cK}{\mathcal{K}}
\newcommand{\vice}{v_{\ice}}
\newcommand{\vii}{v_{\ice,0}}
\newcommand{\tatm}{\tau_{\atm}}
\newcommand{\tocn}{\tau_{\ocn}}
\newcommand{\tice}{\tau_{\ice}}
\DeclareMathOperator{\Id}{Id}
\DeclareMathOperator{\diag}{diag}
\DeclareMathOperator{\tr}{tr}
\DeclareMathOperator{\dist}{dist}
\DeclareMathOperator{\Cof}{Cof}
\newcommand{\Hinfty}{\mathcal{H}^\infty}
\newcommand{\tri}{\triangle}
\newcommand{\trid}{\triangle_\delta}
\newcommand{\zetad}{\zeta_\delta}
\newcommand{\etad}{\eta_\delta}
\newcommand{\sigmad}{\sigma_\delta}
\newcommand{\tvice}{\Tilde{v}_\ice}
\renewcommand{\th}{\Tilde{h}}
\newcommand{\ta}{\Tilde{a}}
\newcommand{\hvice}{\widehat{v}_\ice}
\newcommand{\hh}{\widehat{h}}
\newcommand{\ha}{\widehat{a}}
\newcommand{\hu}{\widehat{u}}
\newcommand{\hv}{\widehat{v}}
\newcommand{\Sh}{S_{\mathrm{h}}}
\newcommand{\Sa}{S_{\mathrm{a}}}
\newcommand{\fgr}{f_{\mathrm{gr}}}
\newcommand{\dOmega}{\del \Omega}
\newcommand{\TOmega}{(0,T) \times \Omega}
\newcommand{\TdOmega}{(0,T) \times \dOmega}
\newcommand{\oOmega}{\overline{\Omega}}
\newcommand{\ccor}{c_{\cor}}
\newcommand{\Catm}{C_{\atm}}
\newcommand{\Cocn}{C_{\ocn}}
\newcommand{\Ratm}{R_{\atm}}
\newcommand{\Rocn}{R_{\ocn}}
\newcommand{\ratm}{\rho_{\atm}}
\newcommand{\rocn}{\rho_{\ocn}}
\newcommand{\rice}{\rho_{\ice}}
\newcommand{\mice}{m_{\ice}}
\newcommand{\xH}{x_{\rH}}
\newcommand{\yH}{y_{\rH}}
\newcommand{\divH}{\mathrm{div}_{\rH} \,}
\newcommand{\nablaH}{\nabla_{\rH}}
\newcommand{\AHD}{A_{\rD}^{\rH}}
\newcommand{\bAH}{\bA^{\rH}}
\newcommand{\tbAH}{\Tilde{\bA}^{\rH}}
\newcommand{\Vatm}{V_\atm}
\newcommand{\Vocn}{V_\ocn}
\newcommand{\del}{\partial}
\newcommand{\dk}[1]{\partial_{#1}}
\newcommand{\dt}{\dk{t}} 
\newcommand{\tin}{\enspace \text{in} \enspace}
\newcommand{\ton}{\enspace \text{on} \enspace}
\newcommand{\tfor}{\enspace \text{for} \enspace}
\newcommand{\tforall}{\enspace \text{for all} \enspace}
\newcommand{\tand}{\enspace \text{and} \enspace}
\newcommand{\twith}{\enspace \text{with} \enspace}
\newcommand{\tso}{\enspace \text{so} \enspace}
\newcommand{\taswellas}{\enspace \text{as well as} \enspace}
\begin{document}

\title[Strong well-posedness of the Hibler sea ice model]{Well-posedness of Hibler's parabolic-hyperbolic sea ice model}

\author{Felix Brandt}
\address{Department of Mathematics, University of California at Berkeley, Berkeley, 94720, CA, USA.}
\email{fbrandt@berkeley.edu}
\subjclass[2020]{35Q86, 35K59, 86A05, 86A10}
\keywords{Hibler's sea ice model, local strong well-posedness, viscous-plastic material, parabolic-hyperbolic problem, quasilinear evolution equation, Lagrangian coordinates, anisotropic ground space}

\begin{abstract}
This paper proves the local-in-time strong well-posedness of a parabolic-hyperbolic regularized version of Hibler's sea ice model.
Hibler's model is the most frequently used sea ice model in climate science.
Lagrangian coordinates are employed to handle the hyperbolic terms in the balance laws.
The resulting problem is regarded as a quasilinear non-autonomous evolution equation.
Maximal $\rL^p$-regularity of the underlying linearized problem is obtained on an anisotropic ground space in order to deal with the lack of regularization in the balance laws.
\end{abstract}

\maketitle

\section{Introduction}

Recent years have witnessed an increase in research on sea ice dynamics both, from the theoretical and the applied side.
One reason for this might be the role of sea ice in climate science, acting as an insulator in between the atmosphere and the ocean.
For more details on the importance of sea ice in climate studies, we refer e.\ g.\ to the survey article of Hunke, Lipscomb and Turner \cite{HLT:10}. 
Golden et al.\ \cite{Gol:20} provide an overview of sea ice modeling.
To date, almost all climate models capturing sea ice dynamics rely on Hibler's viscous-plastic model introduced in \cite{Hib:79} in 1979.
It is a 2D large-scale model that describes dynamic as well as thermodynamic aspects of sea ice.
In particular, the complex mechanical behavior of sea ice is taken into account in the viscous-plastic rheology.

First well-posedness results to Hibler's model were obtained by Brandt, Disser, Haller-Dintelmann and Hieber \cite{BDHH:22} as well as Liu, Thomas and Titi \cite{LTT:22}.
In \cite{BDHH:22}, a fully parabolic variant of Hibler's model is investigated, and local-in-time strong well-posedness and global-in-time well-posedness close to equilibrium solutions are shown.
The parabolic-hyperbolic nature of the model is stressed in \cite{LTT:22}, but the authors strongly modify the stress tensor and establish local strong well-posedness of the resulting problem.

It is the aim of the present paper to prove the local strong well-posedness of the {\em parabolic-hyperbolic} variant of Hibler's model, where the {\em most commonly used regularization for the stress tensor} is employed.
Unlike the models considered in \cite{BDHH:22} or \cite{LTT:22}, this is the model typically studied in numerical simulations.

In contrast to the rigorous mathematical analysis, which only started quite recently, there is vast literature on numerical analysis and simulations of Hibler's model, see for example the work of Zhang and Hibler~\cite{ZH:91}.
In order to ease computational costs, Hunke and Dukowicz \cite{HD:97} suggested an elastic-viscous-plastic rheology.
This model was later on modified in order to reduce the discrepancy with observations, see e.\ g.\ the article of Kimmrich, Danilov and Losch \cite{KDL:15}.
Lemieux and Tremblay \cite{LT:09} implemented an inexact Newton method for Hibler's model.
In \cite{MR:17}, Mehlmann and Richter developed a modified Newton solver. 
For further works in this direction, we also refer to the articles of Seinen and Khouider \cite{SK:18}, Shih, Mehlmann, Losch and Stadler \cite{SMLS:23} or Yaremchuk and Panteleev \cite{YP:22}.
Mehlmann and Korn \cite{MK:21} provided numerical studies of sea ice on triangular grids for an integration into the ocean general circulation model ICON-O.  

As indicated above, we invoke a regularized stress tensor here.
In fact, although the original stress tensor describes an idealized viscous-plastic material, it gives rise to a degenerate problem, see also \autoref{sec:main result}.
As a result, already Hibler \cite{Hib:79} suggested to cut off the viscosities.
In the present paper, we follow the most common regularization of the stress tensor as e.\ g.\ suggested by Kreyscher et al.\ \cite{KHLFG:00}.
Recently, different possibilities to modify the rheology were suggested by Chatta, Khouider and Kesri \cite{CKK:23} and Ringeisen, Losch and Tremblay \cite{RLT:23}.

For the analysis, we employ the Lagrangian change of coordinates in order to deal with the hyperbolic effects in the balance laws. 
The advantage is that the hyperbolic terms in the balance laws are annihilated by the introduction of Lagrangian coordinates.
More precisely, the time derivative with respect to Lagrangian coordinates corresponds precisely to the material derivative in the Eulerian formulation.
For the Lagrangian approach in the context of the compressible Navier-Stokes equations, we refer for instance to the article of Danchin \cite{Dan:14}.
Let us stress, however, that the present problem differs significantly from compressible Navier-Stokes equations.
First, the regularized stress tensor $\sigmad$ shown in \eqref{eq:reg stress tensor sea ice} depends on the deformation tensor $\eps = \nicefrac{1}{2}(\nablaH \vice + (\nablaH \vice)^\top)$ in a nonlinear way.
In addition, the pressure is replaced by a so-called ice strength $P = P(h,a)$ as displayed in \eqref{eq:ice strength} which is in turn a function of the ice thickness~$h$ and the ice concentration $a$ for which balance laws are prescribed.

For the principal variable $u = (\vice,h,a)$ capturing the horizontal ice velocity $\vice$, the ice thickness~$h$ and the ice concentration $a$, a suitable operator $A(t,u)$ and right-hand side $F(t,u)$, we rewrite the resulting {\em non-autonomous} and {\em quasilinear} system in Lagrangian coordinates as a Cauchy problem of the form 
\begin{equation*}
    \frac{\rd}{\rd t} u + A(t,u)u = F(t,u), \tfor t > 0, \enspace u(0) = u_0,
\end{equation*}
on a Banach space $\rX_0$.
For the linearized problem, we employ the so-called {\em maximal regularity} method.
Roughly speaking, this means that for $A(u_0) \coloneqq A(0,u_0) \colon \rX_1 \to \rX_0$ and the time trace $\tr$, the operator
\begin{equation*}
    \Bigl(\frac{\rd}{\rd t} + A(u_0),\tr\Bigr) \colon \bE_1 \to \bE_0 \times \rX_\gamma
\end{equation*}
is an isomorphism between suitable Banach spaces $\bE_1$ and $\bE_0 \times \rX_\gamma$.
In this paper, we choose the $\rL^p$-framework, so we consider solutions in $\bE_1 \coloneqq \rW^{1,p}(0,T;\rX_0) \cap \rL^p(0,T;\rX_1)$, meaning that every term in the differential equation lies in $\bE_0 \coloneqq \rL^p(0,T;\rX_0)$.
The space $\rX_\gamma$ then is the real interpolation space $(\rX_0,\rX_1)_{1-\nicefrac{1}{p},p}$.
For further details on maximal regularity in general and maximal $\rL^p$-regularity in particular, we refer for instance to the monographs of Amann \cite{Ama:95}, Lunardi \cite{Lun:95} or Pr\"uss and Simonett \cite{PS:16}, the memoir of Denk, Hieber and Pr\"uss \cite{DHP:03} or the book chapter of Kunstmann and Weis \cite{KW:04}. 
It is a key step in our analysis to introduce an anisotropic ground space of the form $\rX_0 = \rL^q \times \rW^{1,q} \times \rW^{1,q}$ for the ice velocity, ice thickness and ice concentration to obtain the maximal $\rL^p$-regularity of the linearized problem.
In fact, we even show that the associated operator matrix admits a bounded $\Hinfty$-calculus by means of theory for diagonally dominant block operator matrices and deduce the maximal $\rL^p$-regularity from there.
We then use the maximal $\rL^p$-regularity to set up a fixed point argument in the Lagrangian formulation.
For this, a precise tracking of the change of coordinates is required to derive suitable nonlinear estimates.

Recently, sea ice has also been studied in different contexts.
In \cite{BH:23}, sea ice is investigated in the time periodic setting, see also \cite[Section~7.2]{Bra:24}.
Moreover, local strong well-posedness of the interaction problem of sea ice with a rigid body is established in \cite{BBH:24a}.
The paper \cite{BBH:24b} introduces sea ice as a thin layer in between the atmosphere and the ocean and establishes local strong well-posedness as well as global strong well-posedness close to equilibria of the coupled model.
The motivation is the coupled atmosphere-ocean model introduced by Lions, Temam and Wang \cite{LTW:93, LTW:95}.
In the papers \cite{BH:23, BBH:24a, BBH:24b}, the fully parabolic regularized variant of Hibler's model as analyzed in \cite{BDHH:22} is taken into account.
Let us also mention the recent work of Piersanti and Temam \cite{PT:23} on the modeling and analysis of the dynamics of shallow ice sheets.

For convenience of the reader, we briefly elaborate on the function spaces used in this paper.
Consider a bounded domain $\Omega \subset \bR^2$ with boundary of class $\rC^2$, and suppose that $G$ either represents $\Omega$, or an interval of the form $(0,T)$, for $T > 0$.
With $p \in [1,\infty]$ and $k \in \bN$, we denote by $\rL^p(G)$ and $\rW^{k,p}(G)$ the Lebesgue and Sobolev spaces on $G$, respectively.
The space $\rW_0^{1,q}(\Omega)$, $q \in (1,\infty)$, contains the functions in~$\rW^{1,q}(\Omega)$ whose trace vanishes on $\del \Omega$.
Moreover, for $l \in \{0,1\}$ and $\alpha \in (0,1)$, the space~$\rC^{l,\alpha}(\oOmega)$ consists of all functions $f \in \rC^l(\oOmega)$ such that all partial derivatives $\del^\beta f$ of orders $|\beta| \le l$ are $\alpha$-H\"older continuous on $\oOmega$.
By $\rC_\rb^1([0,\infty))$, we denote the functions in $\rC^1([0,\infty))$ that are bounded on $[0,\infty)$, and whose derivative is additionally bounded on $[0,\infty)$.
The space $\rBUC([0,T])$ represents the bounded and uniformly continuous functions on $[0,T]$.
For $s > 0$ and $p$, $q \in (1,\infty)$, we use $\rB_{qp}^s(\Omega)$ for the Besov spaces on $\Omega$.
Note that they can be obtained from the Lebesgue and Sobolev spaces by real interpolation, see e.\ g.\ \cite[Sec.~2.4 and 4.3.1]{Tri:78}.
More precisely, for $s$, $p$, $q$ as above, we have $\rB_{qp}^s(\Omega) = (\rL^q(\Omega),\rW^{2,q}(\Omega))_{\frac{s}{2},p}$.
Finally, for a Banach space $\rX$, we denote by $\rL^p(0,T;\rX)$ and $\rW^{1,p}(0,T;\rX)$ the respective Bochner spaces.

This article is organized as follows.
In \autoref{sec:main result}, we introduce Hibler's model and state the main result, \autoref{thm:local strong wp sea ice para-hyper}, on the local-in-time strong well-posedness.
\autoref{sec:trafo Lagrangian coords} is then dedicated to the transformation to Lagrangian coordinates and to the reformulation of the main result, see \autoref{thm:local strong wp Lagrangian coords}.
In addition, we discuss the continuous dependence on the initial data as well as a blow-up criterion in the Lagrangian formulation.
The focal point of \autoref{sec:lin theory} is the proof of the bounded $\Hinfty$-calculus and the maximal $\rL^p$-regularity of the linearized operator matrix with the anisotropic ground space.
In \autoref{sec:proof main result}, we prove the main result.
For this, we start with the reformulation as a fixed point problem.
Thereafter, we derive the estimates of the nonlinear terms.
We then deduce \autoref{thm:local strong wp sea ice para-hyper} from \autoref{thm:local strong wp Lagrangian coords} by the invertibility of the change of coordinates for small time.
\autoref{sec:outlook and open problems} presents some concluding remarks.

\section{Hibler's sea ice model and main result}\label{sec:main result}

In this section, we first recall Hibler's viscous-plastic sea ice model from \cite{Hib:79} and then state the main result on the local strong well-posedness.
In the sequel, we consider a bounded domain $\Omega \subset \bR^2$ with boundary $\dOmega$ of class $\rC^2$ and a time interval $(0,T)$, where $0 < T < \infty$.
The model variables are the horizontal sea ice velocity $\vice \colon \TOmega \to \bR^2$, the mean ice thickness $h \colon \TOmega \to [\kappa,\infty)$, with $\kappa > 0$ denoting a small parameter, and the ice concentration $a \colon \TOmega \to (0,1)$.
The latter variable describes the ratio of area covered by thick ice, i.\ e., sea ice with a thickness larger than $h_\bullet$, where $h_\bullet > 0$ is a certain thickness level.
The lower bound of the mean ice thickness means that in each control area, there is at least some sea ice.
In the sequel, we denote by $u$ the principle variable, i.\ e., $u = (\vice,h,a)$.

The characteristics of sea ice as a viscous-plastic material are encoded in the stress tensor introduced below.
In the following, $\eps = \eps(\vice) = \nicefrac{1}{2}(\nablaH \vice + (\nablaH \vice)^\top)$ represents the deformation tensor.
We remark that $\nablaH$ is the gradient in two variables.
The same is valid for $\divH$, denoting the horizontal divergence.
Moreover, for given constants $p^* > 0$ and $c_\bullet > 0$, the ice strength $P$ takes the explicit shape
\begin{equation}\label{eq:ice strength}
    P = P(h,a) = p^* h \mre^{-c_\bullet (1-a)}.
\end{equation}
It can be found in \cite[p.~822, eq.~(17)]{Hib:79}.
For the ratio $e > 1$ of the major to minor axes of the elliptical yield curve of the principle components of the stress, we introduce
\begin{equation*}
    \tri^2(\eps) \coloneqq \left(\eps_{11}^2 + \eps_{22}^2\right)\Bigl(1+\frac{1}{e^2}\Bigr) + \frac{4}{e^2} \eps_{12}^2 + 2 \eps_{11} \eps_{22} \Bigl(1-\frac{1}{e^2}\Bigr).
\end{equation*}
The bulk and shear viscosities are as made precise in \cite[p.~819, eq.~(7) and~(8)]{Hib:79}, and they take the shape
\begin{equation*}
    \zeta(\eps,P) = \frac{P(h,a)}{2 \tri(\eps)} \tand \eta(\eps,P) = e^{-2} \zeta(\eps,P).
\end{equation*}
The stress tensor $\sigma$ accounting for the behavior of sea ice as a viscous-plastic material then reads as
\begin{equation*}
    \sigma = 2 \eta(\eps,P) \eps + [\zeta(\eps,P) - \eta(\eps,P)] \tr(\eps) \Id_2 - \frac{P}{2} \Id_2,
\end{equation*}
with $\Id_2$ denoting the identity matrix.
In spite of the fact that the above law describes an idealized viscous-plastic behavior, the bulk and shear viscosities $\zeta$ and $\eta$ become singular for $\eps \to 0$.
For this reason, we invoke the same regularization as in \cite{MK:21} and \cite{BDHH:22}, see also \cite{KHLFG:00}, by defining
\begin{equation*}
    \trid(\eps) \coloneqq \sqrt{\delta + \tri^2(\eps)}
\end{equation*}
for $\delta > 0$.
Accordingly, we introduce the regularized viscosities
\begin{equation*}
    \zetad(\eps,P) \coloneqq \frac{P(h,a)}{2 \trid(\eps)} \tand \etad(\eps,P) \coloneqq e^{-2} \zetad(\eps,P),
\end{equation*}
resulting in the regularized stress tensor $\sigmad$ studied in the remainder of this article and given by
\begin{equation}\label{eq:reg stress tensor sea ice}
    \sigmad \coloneqq 2 \etad(\eps,P) \eps + [\zetad(\eps,P) - \etad(\eps,P)] \tr(\eps) \Id_2 - \frac{P}{2} \Id_2.
\end{equation}

Hibler's model consists of a momentum equation for the sea ice velocity $\vice$ coupled to balance laws for~$h$ and $a$.
The momentum balance includes the internal ice stress expressed by $\divH \sigmad$ as well as external forcing terms and takes the shape
\begin{equation*}
    \mice \left(\dt \vice + (\vice \cdot \nablaH) \vice\right) = \divH \sigmad - \mice \ccor \vice^\perp - \mice g \nablaH H + \tatm + \tocn(\vice).
\end{equation*}
In the above $\mice = \rice h$ represents the ice mass for a constant ice density $\rice > 0$, $\ccor > 0$ is the Coriolis parameter, $g$ denotes the gravity, $H \colon \TOmega \to [0,\infty)$ is the sea surface dynamic height and~$\tatm$ as well as $\tocn(\vice)$ correspond to the atmospheric wind and oceanic forces.
The latter two read as
\begin{equation*}
    \tatm = \ratm \Catm |\Vatm| \Ratm \Vatm \tand \tocn(\vice) = \rocn \Cocn |\Vocn - \vice| \Rocn (\Vocn - \vice)
\end{equation*}
for constant densities $\ratm > 0$ and $\rocn > 0$, constant drag coefficients $\Catm > 0$ and $\Cocn > 0$, velocities~$\Vatm$ and $\Vocn$ of the atmosphere and the ocean and rotation matrices $\Ratm$ and $\Rocn$.

The balance laws of the mean ice thickness $h$ and the ice concentration $a$ are given by
\begin{equation*}
\left\{
    \begin{aligned}
        \dt h + \divH(\vice h)
        &= \Sh(h,a),\\
        \dt a + \divH(\vice a)
        &= \Sa(h,a),
    \end{aligned}
\right.
\end{equation*}
where $\Sh$ and $\Sa$ represent thermodynamic source terms.
For a function $\fgr \in \rC_{\rb}^1([0,\infty))$ modeling the ice growth rate, see for instance the one suggested by Hibler \cite{Hib:79}, these terms are
\begin{equation*}
    \begin{aligned}
        \Sh(h,a) 
        &= \fgr\left(\frac{h}{a}\right)a + (1-a)\fgr(0) \tand\\
        \Sa(h,a)
        &= \begin{cases*} \frac{\fgr(0)}{\kappa}(1-a),& if $\fgr(0) > 0$, \\ 0, & if $\fgr(0) < 0$,
        \end{cases*}
         \enspace + \enspace \begin{cases*} 0,& if $\Sh > 0$, \\ \frac{a}{2 h}\Sh, & if $\Sh < 0$.
    \end{cases*} 
    \end{aligned}
\end{equation*}

In the present setting, the model is completed by Dirichlet boundary conditions for the sea ice velocity, so $\vice = 0$ on $\TdOmega$, while no boundary conditions for $h$ and $a$ are assumed.
Moreover, the initial conditions read as $\vice(0) = \vii$, $h(0) = h_0$ and $a(0) = a_0$.

The previous considerations are summarized in the complete system of equations
\begin{equation}\label{eq:parabolic-hyperbolic regularized model}
    \left\{
    \begin{aligned}
        \dt \vice + (\vice \cdot \nablaH) \vice 
        &= \frac{1}{\mice}\divH \sigmad - \ccor \vice^\perp - g \nablaH H\\
        &\quad + \frac{1}{\mice}(\tatm + \tocn(\vice)), &&\tin \TOmega,\\
        \dt h + \divH(\vice h)
        &= \Sh(h,a), &&\tin \TOmega,\\
        \dt a + \divH(\vice a)
        &= \Sa(h,a), &&\tin \TOmega,\\
        \vice &
        = 0, &&\ton \TdOmega,\\
        \vice(0) = \vii, \enspace h(0) 
        &= h_0, \enspace a(0) = a_0, &&\tin \Omega.
    \end{aligned}
    \right.
\end{equation}

Before stating the main result, we require some further preparation with regard to notation which we address below.
For brevity, we define the spaces $\rX_0$ and $\rX_1$ by
\begin{equation}\label{eq:ground space and regularity space sea ice para-hyper}
    \rX_0 \coloneqq \rL^q(\Omega)^2 \times \rW^{1,q}(\Omega) \times \rW^{1,q}(\Omega) \tand \rX_1 \coloneqq \rW^{2,q}(\Omega)^2 \cap \rW_0^{1,q}(\Omega)^2 \times \rW^{1,q}(\Omega) \times \rW^{1,q}(\Omega),
\end{equation}
where $q \in (1,\infty)$.
In the sequel, we will assume that $p$, $q \in (1,\infty)$ satisfy
\begin{equation}\label{eq:cond p and q sea ice para-hyper}
    \frac{1}{p} + \frac{1}{q} < \frac{1}{2}.
\end{equation}
With regard to the initial data, for $p$, $q \in (1,\infty)$ fulfilling \eqref{eq:cond p and q sea ice para-hyper}, we then introduce the time trace space~$\rX_\gamma = (\rX_0,\rX_1)_{1-\nicefrac{1}{p},p}$ taking the shape
\begin{equation}\label{eq:trace space sea ice para-hyper}
    \rX_\gamma = \rB_{qp,\rD}^{2-\nicefrac{2}{p}}(\Omega)^2 \times \rW^{1,q}(\Omega) \times \rW^{1,q}(\Omega).
\end{equation}
In the above, the subscript $_\rD$ indicates Dirichlet boundary conditions.
For $p$, $q \in (1,\infty)$ such that \eqref{eq:cond p and q sea ice para-hyper} holds true, it follows from Sobolev embeddings, see for example \cite[Theorem~4.6.1]{Tri:78}, that
\begin{equation}\label{eq:emd trace space sea ice para-hyper}
    \rX_\gamma \hookrightarrow \rB_{qp}^{2 - \nicefrac{2}{p}}(\Omega)^2 \times \rW^{1,q}(\Omega) \times \rW^{1,q}(\Omega) \hookrightarrow \rC^{1,\alpha}(\oOmega)^2 \times \rC^{0,\alpha}(\oOmega) \times \rC^{0,\alpha}(\oOmega),
\end{equation}
for some $\alpha > 0$.
In order to guarantee that the initial values lie in the physically relevant ranges of the respective variables, we introduce an open set $V \subset \rX_\gamma$ with
\begin{equation}\label{eq:open set V sea ice para-hyper}
    V \coloneqq \{u = (\vice,h,a) \in \rX_\gamma : h > \kappa \tand a \in (0,1)\}.
\end{equation}
In view of \eqref{eq:emd trace space sea ice para-hyper}, it is legitimate to make pointwise assumptions on $h$ and $a$ for $u \in \rX_\gamma$ if \eqref{eq:cond p and q sea ice para-hyper} is valid.

For the statement of the main result, we explicitly state the assumptions on the external forcing terms.

\begin{asu}\label{ass:external forcing terms}
Let $q \in (1,\infty)$.
We make the following assumptions on the data.
\begin{enumerate}[(a)]
    \item The wind and ocean velocities $\Vatm$ and $\Vocn$ satisfy $\Vatm$, $\Vocn \in \rL^\infty(0,t;\rL^{2q}(\Omega)^2)$ for all $t > 0$.
    \item For the sea surface dynamic height $H$, it is valid that $\nablaH H \in \rL^\infty(0,t;\rL^q(\Omega)^2)$ for all $t > 0$.
    \item It holds that $\fgr \in \rC_{\rb}^1([0,\infty))$ for the ice growth rate $\fgr$.
\end{enumerate}
\end{asu}

The above assumptions on the integrability and regularity of the data are relatively mild, and it seems that they are physically reasonable.
For examples of ice growth rate functions $\fgr$, which generally depend on the season, we refer for instance to the ones considered by Hibler \cite[p.~821]{Hib:79}.

We are now in the position to formulate the main result of this article.

\begin{thm}\label{thm:local strong wp sea ice para-hyper}
Let $p, q \in (1,\infty)$ be such that \eqref{eq:cond p and q sea ice para-hyper} is valid, let $u_0 \in V$, where $V$ was introduced in \eqref{eq:open set V sea ice para-hyper}, and recall the spaces $\rX_0$ and $\rX_1$ from \eqref{eq:ground space and regularity space sea ice para-hyper}.
Moreover, suppose that the external terms $\Vatm$, $\Vocn$, $H$ and $\fgr$ fulfill \autoref{ass:external forcing terms}.
Then there is $T > 0$ such that \eqref{eq:parabolic-hyperbolic regularized model} has a unique solution $u = (\vice,h,a)$ with
\begin{equation*}
    u \in \rW^{1,p}(0,T;\rX_0) \cap \rL^p(0,T;\rX_1) \cap \rC([0,T];V).
\end{equation*}
\end{thm}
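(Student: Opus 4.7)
The overall strategy, already announced in the introduction, proceeds in three stages: transform \eqref{eq:parabolic-hyperbolic regularized model} into Lagrangian coordinates so that the hyperbolic transport terms disappear, recast the resulting system as a non-autonomous quasilinear Cauchy problem $\dt u + A(t,u)u = F(t,u)$ on $\rX_0$, and solve this problem via a fixed point argument in $\bE_1 = \rW^{1,p}(0,T;\rX_0) \cap \rL^p(0,T;\rX_1)$, exploiting the maximal $\rL^p$-regularity of the linearization $A(u_0)$ on the anisotropic ground space \eqref{eq:ground space and regularity space sea ice para-hyper}. The continuity $u \in \rC([0,T];V)$ then comes for free from the trace embedding $\bE_1 \hookrightarrow \rC([0,T];\rX_\gamma)$ together with the openness of $V \subset \rX_\gamma$ and the fact that $u_0 \in V$ at $t = 0$.

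The first step is the Lagrangian change of variables. Let $X(t,\cdot)$ be the flow generated by $\vice$ on $\oOmega$, i.e.\ the solution of $\dt X(t,y) = \vice(t,X(t,y))$ with $X(0,y) = y$; the Dirichlet condition $\vice|_{\dOmega} = 0$ ensures that $X(t,\cdot)$ leaves $\dOmega$ invariant. Setting $\tvice(t,y) = \vice(t,X(t,y))$, $\th(t,y) = h(t,X(t,y))$ and $\ta(t,y) = a(t,X(t,y))$, the material derivatives in \eqref{eq:parabolic-hyperbolic regularized model} become pure time derivatives, while the elliptic operator $\divH \sigmad$ turns into a second-order operator in $y$ whose coefficients depend on $\nabla X$ and $(\nabla X)^{-1}$. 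The balance laws for $h$ and $a$ reduce to equations of the form $\dt \th = -\th\,(\divH \vice) \circ X + \Sh(\th,\ta)$ (and analogously for $\ta$), containing no spatial derivatives of $\th,\ta$. This yields the desired Cauchy problem on $\rX_0$, and the main theorem is thereby reduced to its Lagrangian counterpart \autoref{thm:local strong wp Lagrangian coords}.

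Next, I would linearize at the initial value $u_0 \in V$ and analyse $A(u_0) \colon \rX_1 \to \rX_0$. Its $(1,1)$-block is a uniformly elliptic second-order operator acting on $\vice$, obtained by linearizing $\divH \sigmad$ at $\vice = \vii$. The $\trid$-regularization is crucial here: it keeps $\zetad, \etad$ bounded away from $0$ and $\infty$ and makes $\sigmad$ a $\rC^1$-function of its arguments. Together with Dirichlet boundary conditions and the $\rC^{1,\alpha}(\oOmega)$-regularity of the coefficients guaranteed by \eqref{eq:emd trace space sea ice para-hyper}, this block enjoys a bounded $\Hinfty$-calculus on $\rL^q(\Omega)^2$. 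The $(2,2)$- and $(3,3)$-blocks are first-order transport-type operators on $\rW^{1,q}(\Omega)$ with coefficients in $\rC^{1,\alpha}$, to which a characteristics/similarity argument yields a bounded $\Hinfty$-calculus after a suitable shift. With respect to the anisotropic scale $\rX_0, \rX_1$, all off-diagonal couplings are strictly lower order, so the perturbation theory for diagonally dominant block operator matrices produces a bounded $\Hinfty$-calculus of $A(u_0)$ on $\rX_0$ and, in particular, maximal $\rL^p$-regularity of the linearized operator $\bigl(\dt + A(u_0), \text{time trace}\bigr) \colon \bE_1 \to \bE_0 \times \rX_\gamma$.

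Finally, I would close the argument via a contraction mapping scheme in the spirit of \cite{PS:16}. Fix an $\bE_1$-extension $u_\ast$ of $u_0$ with $u_\ast(0) = u_0$ and, for $v$ in a small closed ball around $u_\ast$ inside $\bE_1$, define $\Phi(v) = u$ as the unique solution of the linear problem $\dt u + A(u_0) u = [A(u_0) - A(t,v)] v + F(t,v)$, $u(0) = u_0$, produced by the previous step. It suffices to verify that $\Phi$ maps this ball into itself and is a contraction for $T$ small; this in turn reduces to local Lipschitz estimates of $u \mapsto A(t,u)u$ and $u \mapsto F(t,u)$ in the norms of $\bE_0$. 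These estimates exploit the $\rC^1$-dependence of $\sigmad$ on its arguments, the $\rC_\rb^1$-assumption on $\fgr$ appearing in $\Sh, \Sa$, the quadratic structure of $\tatm, \tocn$, and the $\rC^1$-control of $X(t,\cdot) - \mathrm{id}$ and $\nabla X - \Id_2$ by $T^{\theta} \|\vice\|_{\bE_1}$ for some $\theta > 0$. The main obstacle lies precisely here: because $h$ and $a$ carry only $\rW^{1,q}$-regularity, every nonlinear term must be estimated on this lower scale, and the compositions $f \mapsto f \circ X$ produced by the change of variables have to be tracked sharply both in $\rW^{1,q}$ and in the trace space $\rX_\gamma$; it is exactly this matching that dictates the anisotropic choice of $\rX_0$. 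Once $\Phi$ has a unique fixed point $u$ in $\bE_1$, the smallness of $T$ guarantees that $X(t,\cdot)$ remains a $\rC^1$-diffeomorphism of $\oOmega$ onto itself, so inverting the coordinate change yields the unique Eulerian solution $(\vice,h,a)$ of \eqref{eq:parabolic-hyperbolic regularized model} with the regularity asserted in \autoref{thm:local strong wp sea ice para-hyper}.
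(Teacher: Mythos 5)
Your overall route coincides with the paper's: transform to Lagrangian coordinates to kill the transport terms, prove maximal $\rL^p$-regularity of the linearization on the anisotropic space $\rX_0 = \rL^q \times \rW^{1,q} \times \rW^{1,q}$ via a bounded $\Hinfty$-calculus for a block operator matrix, run a contraction mapping in $\bE_1$ for small $T$, and invert the (small-time) diffeomorphism to return to Eulerian coordinates. Your fixed-point scheme with inhomogeneous initial data in a ball around an extension $u_\ast$ is a standard variant of the paper's device of subtracting the reference solution $u_0^*$ (which the paper uses to get a $T$-independent maximal regularity constant); that difference is harmless.

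There is, however, one concrete flaw in your linear-theory step. You claim that the $(2,2)$- and $(3,3)$-blocks of $A(u_0)$ are ``first-order transport-type operators on $\rW^{1,q}(\Omega)$'' admitting a bounded $\Hinfty$-calculus after a shift via a characteristics argument. This contradicts your own first stage (and the paper): in Lagrangian coordinates the $\th$- and $\ta$-equations contain no spatial derivatives of $\th$ and $\ta$, so in the linearization \eqref{eq:op matrix sea ice para-hyper} these diagonal blocks are zero (and merely $\omega \Id$ after the shift), i.e.\ bounded. This is not a cosmetic point: it is exactly the hypothesis $D \in \cL(\rY)$ needed to apply the diagonally dominant block-matrix result (\autoref{lem:Hinfty diag dom op matrix}), with $h_0 \divH$, $a_0 \divH$ relatively bounded by the Hibler block and the pressure-gradient terms bounded from $\rW^{1,q}$ to $\rL^q$ — the raison d'\^etre of the anisotropic ground space. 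If the diagonal blocks genuinely were transport operators, your argument would break down: first-order transport operators on $\rL^q$- or $\rW^{1,q}$-type spaces are generators of groups, not of analytic semigroups, so they do not admit a bounded $\Hinfty$-calculus with angle strictly less than $\nicefrac{\pi}{2}$ and in particular lack maximal $\rL^p$-regularity, and the unsubstantiated ``characteristics/similarity'' claim cannot deliver what the block-matrix lemma and the subsequent maximal regularity argument require. Once you replace that sentence by the correct observation that the lower-right diagonal block is bounded (zero up to the shift), your proof outline matches the paper's.
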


\section{Transformation to Lagrangian coordinates}
\label{sec:trafo Lagrangian coords}

This section is dedicated to the transformation to Lagrangian coordinates which is a major part of the strategy to show the local strong well-posedness.
The coordinate transform from Eulerian to Lagrangian coordinates involves the characteristics associated to the ice velocity $\vice$.
In other words, $X$ solves
\begin{equation}\label{eq:char Euler Lagrange}
    \left\{
    \begin{aligned}
        \dt X(t,\yH)
        &= \vice(t,X(t,\yH)), &&\tfor t > 0,\\
        X(0,\yH)
        &= \yH, &&\tfor \yH \in \bR^2.
    \end{aligned}
    \right.
\end{equation}
Given $t \in (0,T)$, we use $Y(t,\cdot) = [X(t,\cdot)]^{-1}$ to denote the inverse of $X(t,\cdot)$, and we will elaborate on the invertibility in more details later on.
The variables in Lagrangian coordinates are then defined by
\begin{equation}\label{eq:change of var}
    \begin{aligned}
        \tvice(t,\yH)
        &\coloneqq \vice(t,X(t,\yH)), \enspace \th(t,\yH) \coloneqq h(t,X(t,\yH)) \tand \ta(t,\yH) \coloneqq a(t,X(t,\yH)).
    \end{aligned}
\end{equation}
Integrating in time in \eqref{eq:char Euler Lagrange}, and inserting the change of variables from \eqref{eq:change of var}, we deduce that
\begin{equation*}
    X(t,\yH) = \yH + \int_0^t \tvice(s,\yH) \srd s.
\end{equation*}

The main reason for the transform of the Lagrangian coordinates is that this allows us to circumvent the hyperbolic effects in the balance laws.
In fact, it readily follows that the time derivative in Lagrangian coordinates coincides with the material derivative in Eulerian coordinates.
Indeed, we have
\begin{equation*}
    \begin{aligned}
        \dt \tvice(t,\yH)
        &= \dt \vice(t,X(t,\yH)) + (\vice(t,X(t,\yH)) \cdot \nablaH) \vice(t,X(t,\yH)),\\
        \dt \th(t,\yH)
        &= \dt h(t,X(t,\yH)) + \vice(t,X(t,\yH)) \cdot \nablaH h(t,X(t,\yH)) \tand\\
        \dt \ta(t,\yH)
        &= \dt a(t,X(t,\yH)) + \vice(t,X(t,\yH)) \cdot \nablaH a(t,X(t,\yH)).
    \end{aligned}
\end{equation*}

We next compute the other transformed terms from \eqref{eq:parabolic-hyperbolic regularized model}.
For the internal ice stress, we first recall the so-called {\em Hibler operator} related to $\divH \sigmad$, and we define the matrix $\bS \colon \bR^{2 \times 2} \to \bR^{2 \times 2}$ such that
\begin{equation*}
    \bS \eps = \begin{pmatrix}
    \left(1 + \frac{1}{e^2}\right) \eps_{11} + \left(1 - \frac{1}{e^2}\right) \eps_{22} & \frac{1}{e^2} \left(\eps_{12} + \eps_{21}\right) \\
    \frac{1}{e^2} \left(\eps_{12} + \eps_{21}\right) & \left(1 - \frac{1}{e^2}\right) \eps_{11} + \left(1 + \frac{1}{e^2}\right) \eps_{22}
    \end{pmatrix}.
\end{equation*}
An identification of $\eps \in \bR^{2 \times 2}$ with the vector $(\eps_{11},\eps_{12},\eps_{21},\eps_{22})^\top \in \bR^4$ then allows us to interpret the action of $\bS$ to $\eps$ as the multiplication by the matrix
\begin{equation*}
    \bS = \left(\bS_{ij}^{kl}\right) = \begin{pmatrix}
	    1+\frac{1}{e^2} & 0 & 0 & 1-\frac{1}{e^2} \\ 
		0 & \frac{1}{e^2}  & \frac{1}{e^2} & 0   \\
		0 & \frac{1}{e^2} & \frac{1}{e^2} & 0  \\
	     1-\frac{1}{e^2} & 0 & 0 & 1 + \frac{1}{e^2} 
	\end{pmatrix}.
\end{equation*}
This also leads to $\tri^2(\eps) = \eps^\top \bS \eps$.
Defining $S_\delta \coloneqq S_\delta(\eps,P) \coloneqq \frac{P}{2} \frac{\bS \eps}{\trid(\eps)}$, we find $\sigmad(\eps,P) = S_\delta(\eps,P) - \frac{P}{2}\Id_2$.
Hibler's operator is then defined by
\begin{equation*}
    \bAH \vice \coloneqq \frac{1}{\rice h} \divH S_\delta = \frac{1}{\rice h} \divH\left(\frac{P}{2} \frac{\bS \eps}{\sqrt{\delta + \eps^\top \bS \eps}}\right).
\end{equation*}
From \cite[Section~3]{BDHH:22}, we recall its representation
\begin{equation}\label{eq:Hibler op diff form}
    \begin{aligned}
        (\bAH \vice)_i
        &= \sum_{j,k,l=1}^2 \frac{P}{2 \rice h} \frac{1}{\trid(\eps)} \Bigl(\bS_{ij}^{kl} - \frac{1}{\trid^2(\eps)} (\bS \eps)_{ik} (\bS \eps)_{lj}\Bigr) \del_k \eps_{jl} + \frac{1}{2 \rice h \trid(\eps)} \sum_{j=1}^2 (\del_j P) (\bS \eps)_{ij},
    \end{aligned}
\end{equation}
where $i = 1,2$.
For the principal part, it is advantageous to define coefficients by
\begin{equation}\label{eq:coeffs principal part Hibler op}
    a_{ij}^{kl}(\eps,P) \coloneqq -\frac{P}{2 \rice h} \frac{1}{\trid(\eps)} \left(\bS_{ij}^{kl} - \frac{1}{\trid^2(\eps)}(\bS \eps)_{ik} (\bS \eps)_{jl}\right).
\end{equation}
At this stage, it is also worthwhile invoking the linearized Hibler operator.
For $u_0 \in \rC^1(\oOmega)^2 \times \rC(\oOmega) \times \rC(\oOmega)$ such that $h_0 > \kappa$, and with the notation $\rD_m = - \mri \del_m$, it is of the form
\begin{equation}\label{eq:lin Hibler op}
    \begin{aligned}
        [\bAH(u_0) \vice]_i
        &= \sum_{j,k,l=1}^2 a_{ij}^{kl}(\eps(\vii),P(h_0,a_0)) \rD_k \rD_l v_{\ice,j}\\
        &\quad + \frac{1}{2 \rice h_0 \trid(\eps(v_{\ice,0}))} \sum_{j=1}^2 (\del_j P(h_0,a_0)) (\bS \eps(\vice))_{ij}.
    \end{aligned}
\end{equation}
The above conditions on $u_0$ are especially satisfied provided $u_0 \in V$, and $p, q \in (1,\infty)$ fulfill \eqref{eq:cond p and q sea ice para-hyper}.

Concerning the transformed terms, we start with the symmetric part of the gradient, transforming to
\begin{equation*}
    2 \eps_{ij}(\vice) = \del_i v_{\ice,j} + \del_j v_{\ice,i} = \sum_{k=1}^2 (\del_i Y_k) \del_k \tv_{\ice,j} + (\del_j Y_k) \del_k \tv_{\ice,i} \eqqcolon 2 \teps_{ij}(\tvice).
\end{equation*}
For the coefficients as in \eqref{eq:coeffs principal part Hibler op}, we also define
\begin{equation}\label{eq:coeffs transformed Hibler sea ice para-hyper}
    a_{ij}^{klm}(\teps(\tvice),P(\th,\ta)) \coloneqq (\del_k Y_m) a_{ij}^{kl}(\teps(\tvice),P(\th,\ta)).
\end{equation}
Let us observe that the coefficients $a_{ij}^{klm}$ depend on the time $t$ via $\del_k Y_m$.
For simplicity of notation, we do not write this dependence explicitly.
Besides, we compute
\begin{equation*}
    \del_m \teps_{jl}(\vice) = \frac{1}{2} \sum_{n=1}^2\bigl((\del_m \del_j Y_n) \del_n \tv_{ice,l} + (\del_j Y_n) \del_m \del_n \tv_{\ice,l} + (\del_m \del_l Y_n) \del_n \tv_{\ice,j} + (\del_l Y_n) \del_m \del_n \tv_{\ice,j}\bigr).
\end{equation*}
In view of \eqref{eq:Hibler op diff form}, these calculations result in the transformed Hibler operator given by
\begin{equation}\label{eq:transformed Hibler op sea ice para-hyper}
    \begin{aligned}
        \tbAH(\tu) \tvice
        &= \sum_{j,k,l,m=1}^2 a_{ij}^{klm}(\teps(\tvice),P(\th,\ta)) \del_m \teps_{jl}(\tvice)\\
        &\quad + \frac{1}{2 \rice \th \trid(\teps(\tvice))} \sum_{j,k=1}^2 (\del_j Y_k)(\del_k \th + c_\bullet \del_k \ta) (\bS \teps(\tvice))_{ij}.
    \end{aligned}
\end{equation}

The transformed terms associated to the horizontal divergence of the ice strength $P$ read as
\begin{equation*}
    (\tBh(\tu) \th)_i \coloneqq \frac{\del_h P(\th,\ta)}{2 \rice \th} \sum_{j=1}^2 (\del_i Y_j) \del_j \th \tand (\tBa(\tu) \ta)_i \coloneqq \frac{\del_a P(\th,\ta)}{2 \rice \th} \sum_{j=1}^2 (\del_i Y_j) \del_j \ta.
\end{equation*}
For brevity, we will also use the notation $\tB(\tu)\binom{\th}{\ta} \coloneqq \tBh(\tu) \th + \tBa(\tu) \ta$ in the sequel.
The last transformed term involving differential operators is $\divH \vice$, and we get $\divH \vice = \sum_{j,k=1}^2 (\del_j Y_k) \del_k \tv_{\ice,j}$.
The remaining transformed terms are obtained by an insertion of the variables in Lagrangian coordinates instead of Eulerian coordinates, i.\ e., $u = (\vice,h,a)$ is replaced by $(\tvice,\th,\ta)$.
In summary, with the abbreviation $\tice(\tvice) \coloneqq \tatm + \tocn(\tvice)$, the transformed system of equations takes the shape
\begin{equation}\label{eq:transformed parabolic-hyperbolic regularized model}
    \left\{
    \begin{aligned}
        \dt \tvice 
        &= \tbAH(\tu) \tvice - \tB(\tu)\binom{\th}{\ta} - \ccor \tvice^\perp - g \nablaH H + \frac{1}{\rice \th}\tice(\tvice), &&\tin \TOmega,\\
        \dt \th
        &= -\th \sum_{j,k=1}^2 (\del_j Y_k) \del_k \tv_{\ice,j} + \Sh(\th,\ta), &&\tin \TOmega,\\
        \dt \ta
        &= -\ta \sum_{j,k=1}^2 (\del_j Y_k) \del_k \tv_{\ice,j} + \Sa(\th,\ta), &&\tin \TOmega,\\
        \tvice 
        &= 0, &&\ton \TdOmega,\\
        \tvice(0) &= \vii, \enspace \th(0) = h_0, \enspace \ta(0) = a_0, &&\tin \Omega.
    \end{aligned}
    \right.
\end{equation}

We now reformulate the main result in terms of the transformed system.
The first task is then to verify this theorem, and in the end, we will deduce the assertion of \autoref{thm:local strong wp sea ice para-hyper} therefrom.

\begin{thm}\label{thm:local strong wp Lagrangian coords}
Let $p, q \in (1,\infty)$ satisfy \eqref{eq:cond p and q sea ice para-hyper}, let $u_0 \in V$, with $V$ as in \eqref{eq:open set V sea ice para-hyper}, and recall the spaces $\rX_0$ and~$\rX_1$ from \eqref{eq:ground space and regularity space sea ice para-hyper}.
Additionally, suppose that the external terms $\Vatm$, $\Vocn$, $H$ and $\fgr$ fulfill \autoref{ass:external forcing terms}.
Then there exists $T > 0$ such that \eqref{eq:transformed parabolic-hyperbolic regularized model} has a unique solution $\tu = (\tvice,\th,\ta)$ with
\begin{equation*}
    \tu \in \rW^{1,p}(0,T;\rX_0) \cap \rL^p(0,T;\rX_1) \cap \rC([0,T];V).
\end{equation*}
\end{thm}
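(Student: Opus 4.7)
My plan is to view \eqref{eq:transformed parabolic-hyperbolic regularized model} as a quasilinear non-autonomous Cauchy problem of the form
\begin{equation*}
    \dt \tu + A(\tu) \tu = F(\tu), \qquad \tu(0) = u_0,
\end{equation*}
on the anisotropic ground space $\rX_0$, where $A(\tu)$ encodes the principal parts $\tbAH(\tu)$ and $\tB(\tu)$ in the velocity equation together with the transport-type coupling $\th \sum (\del_j Y_k)\del_k \tv_{\ice,j}$ (and its $\ta$-analogue) into the thickness and concentration equations, while $F(\tu)$ collects the zeroth-order contributions (Coriolis, sea surface height, $\tice(\tvice)$, thermodynamic sources $\Sh$, $\Sa$). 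Linearizing at $u_0 \in V$ rewrites the system as
\begin{equation*}
    \dt \tu + A(u_0) \tu = [A(u_0) - A(\tu)]\tu + F(\tu), \qquad \tu(0) = u_0,
\end{equation*}
whose principal part $A(u_0)$ is the constant-coefficient linearized Hibler operator \eqref{eq:lin Hibler op} in the first component, coupled to $\tB(u_0)$ and to the $\th_0$-, $a_0$-weighted divergence terms.

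The second step is to apply the maximal $\rL^p$-regularity of $A(u_0)$ on $\rX_0$ with domain $\rX_1$ established in \autoref{sec:lin theory}. This furnishes a bounded solution operator from $\rL^p(0,T;\rX_0) \times \rX_\gamma$ to $\bE_1 = \rW^{1,p}(0,T;\rX_0) \cap \rL^p(0,T;\rX_1)$, so solving the nonlinear equation amounts to finding a fixed point of the map
\begin{equation*}
    \Phi(\tv) \coloneqq (\dt + A(u_0))^{-1}\bigl([A(u_0) - A(\tv)]\tv + F(\tv), u_0\bigr),
\end{equation*}
where the second slot denotes the initial value. I would work on the closed complete set
\begin{equation*}
    K_{T,r} \coloneqq \{\tv \in \bE_1 : \tv(0) = u_0, \enspace \|\tv - \tu^*\|_{\bE_1} \le r\},
\end{equation*}
with $\tu^*$ the $\bE_1$-solution of $\dt \tu^* + A(u_0)\tu^* = 0$, $\tu^*(0) = u_0$. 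Because $\bE_1 \hookrightarrow \rBUC([0,T];\rX_\gamma)$ and \eqref{eq:emd trace space sea ice para-hyper} gives $\rX_\gamma \hookrightarrow \rC^{1,\alpha}(\oOmega)^2 \times \rC^{0,\alpha}(\oOmega) \times \rC^{0,\alpha}(\oOmega)$, shrinking $r$ and then $T$ keeps every $\tv \in K_{T,r}$ pointwise in $V$, so $\th \ge \kappa/2$ and $\ta$ stays in a compact subinterval of $(0,1)$; this rules out degeneration of the coefficients \eqref{eq:coeffs principal part Hibler op} and of $P(\th,\ta)$ and its derivatives.

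The crux is to show that $\Phi$ is a strict contraction on $K_{T,r}$ with contraction constant $L(T,r) \to 0$ as $T \to 0$, which requires two intertwined nonlinear estimates. First I would control the Lagrangian change of coordinates: from $X(t,\cdot) = \mathrm{id} + \int_0^t \tvice(s,\cdot)\srd s$ and $\bE_1 \hookrightarrow \rBUC([0,T];\rC^{1,\alpha})$ on the velocity component, one obtains $\del_j Y_k = \delta_{jk} + \mathcal{O}(T)$ in $\rL^\infty(0,T;\rC^{0,\alpha}(\oOmega))$ and Lipschitz dependence of $(\del_j Y_k)$ on $\tvice$ in the same norm, so that $X(t,\cdot)$ is a $\rC^{1,\alpha}$-diffeomorphism of $\oOmega$ for $T$ small. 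Second I would estimate the "frozen-coefficient" difference $[A(u_0)-A(\tv)]\tv$ in $\rL^p(0,T;\rX_0)$: the Hibler coefficients $a_{ij}^{klm}(\teps(\tvice),P(\th,\ta))$ are smooth functions of $(\teps,P,\del Y)$ away from the singular set, so their differences are controlled by $\|\tv - u_0\|_{\rC^{1,\alpha} \times \rC^{0,\alpha} \times \rC^{0,\alpha}}$, which is $\mathcal{O}(r) + \mathcal{O}(T^{1-\nicefrac{1}{p}})$ by the mixed-derivative embedding $\bE_1 \hookrightarrow \rC^{\theta}([0,T];\rX_\gamma)$. Multiplying by second derivatives of $\tv$ (which live in $\rL^p(0,T;\rL^q)$) gives the required smallness in the velocity component; for the $h$- and $a$-components one exploits that these components of $A(u_0)-A(\tv)$ involve only first-order $\tvice$-derivatives (available in $\rL^p(0,T;\rW^{1,q})$) multiplied by pointwise factors and hence land directly in $\rL^p(0,T;\rW^{1,q})$, matching the anisotropic ground space. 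The lower-order terms $F(\tv)$ (drag $\tice(\tvice)$, sources $\Sh$, $\Sa$, Coriolis, $\nablaH H$) are Lipschitz on $K_{T,r}$ by \autoref{ass:external forcing terms} and $\fgr \in \rC_\rb^1$, contributing an $\mathcal{O}(T^{1/p})$-small term.

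I expect the main obstacle to be the nonlinear estimate on the transformed principal part \eqref{eq:transformed Hibler op sea ice para-hyper}: its coefficients depend on $\teps(\tvice)$, on $P(\th,\ta)$, and on the nonlocal-in-time Jacobian $\del_j Y_k$, so quantifying Lipschitz continuity of the map $\tvice \mapsto (\del Y)$ in exactly those norms in which the corresponding difference of second derivatives is estimated, while keeping all contributions consistent with the anisotropy of $\rX_0$ (velocity component in $\rL^q$, height and concentration in $\rW^{1,q}$), is the delicate bookkeeping. Once the contraction property is in place, the Banach fixed point theorem yields a unique $\tu \in \bE_1 \cap \rC([0,T];V)$ solving \eqref{eq:transformed parabolic-hyperbolic regularized model}, proving \autoref{thm:local strong wp Lagrangian coords}; the subsequent deduction of \autoref{thm:local strong wp sea ice para-hyper} then follows by pulling back along the $\rC^{1,\alpha}$-diffeomorphism $X(t,\cdot)$, as announced at the end of \autoref{sec:trafo Lagrangian coords}.
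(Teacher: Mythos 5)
Your proposal is correct and follows essentially the same route as the paper: you shift by the reference solution of the frozen linear problem at $u_0$ (the paper's $u_0^*$), use the maximal $\rL^p$-regularity of $A(u_0)$ on the anisotropic ground space $\rX_0$, estimate the difference terms $[A(u_0)-A(\tv)]\tv$ and the lower-order forcings with smallness in $T$ and $r$ via the Lagrangian Jacobian bounds $\nablaH Y = \Id_2 + \mathcal{O}(T^{\nicefrac{1}{p'}})$, and conclude by the contraction mapping principle in $\bE_1$. The only cosmetic difference is that the paper works with $\hu = \tu - u_0^*$ in a ball of ${}_{0}\bE_1$ with homogeneous initial data (which is what makes the maximal regularity constant $T$-independent), whereas you phrase the same construction as a ball around $\tu^*$ with trace $u_0$; these are equivalent up to the affine shift.
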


Next, we study the continuous dependence of the solution on the initial data.
As a by-product, we also obtain the local uniformity of the time of existence $T > 0$ with respect to the initial data.

\begin{coron}\label{cor:cont dep init data}
Under the assumptions of \autoref{thm:local strong wp Lagrangian coords}, let $u_0 \in V$.
Then there are $T > 0$ and $r > 0$ such that $\obB_{\rX_\gamma}(u_0,r) \subset V$, and for all $u_1 \in \obB_{\rX_\gamma}(u_0,r)$, there exists a unique solution $\tu = \tu(\cdot,u_1)$ to \eqref{eq:transformed parabolic-hyperbolic regularized model} on~$(0,T)$ in the regularity class as in \autoref{thm:local strong wp Lagrangian coords}.
Moreover, there exists a constant $C > 0$ such that for all $u_1$, $u_2 \in \obB_{\rX_\gamma}(u_0,r)$, the respective solutions to \eqref{eq:transformed parabolic-hyperbolic regularized model} satisfy $\| \tu(\cdot,u_1) - \tu(\cdot,u_2) \|_{\bE_1} \le C \cdot \| u_1 - u_2 \|_{\rX_\gamma}$.
\end{coron}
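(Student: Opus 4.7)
The plan is to exploit the uniformity of the fixed point argument behind \autoref{thm:local strong wp Lagrangian coords}. Since $V$ is open in $\rX_\gamma$ and $u_0 \in V$, there is $r_1 > 0$ with $\obB_{\rX_\gamma}(u_0,r_1) \subset V$, so that the constraints $h > \kappa$ and $a \in (0,1)$ hold with uniform margins on this ball; in particular the quantities $\nicefrac{1}{\th}$, $P(\th,\ta)$ and $\trid(\teps)$ entering the transformed Hibler operator \eqref{eq:transformed Hibler op sea ice para-hyper} admit uniform upper and lower bounds.

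First I would revisit the contraction map $\Phi_{u_1}$ used to prove \autoref{thm:local strong wp Lagrangian coords}: its self-mapping radius and contraction constant are governed by the maximal $\rL^p$-regularity constant of the operator linearized at $u_1$ on the anisotropic ground space $\rX_0$, together with Lipschitz constants of the nonlinearities on the ball. Both quantities depend continuously on $u_1 \in \rX_\gamma$ through the coefficients \eqref{eq:coeffs principal part Hibler op} and \eqref{eq:coeffs transformed Hibler sea ice para-hyper} and the embedding \eqref{eq:emd trace space sea ice para-hyper}. After possibly shrinking $r \le r_1$ and $T > 0$, one therefore obtains a single common ball, contraction constant, and existence time for every $u_1 \in \obB_{\rX_\gamma}(u_0,r)$, giving a unique solution $\tu(\cdot,u_1) \in \rW^{1,p}(0,T;\rX_0) \cap \rL^p(0,T;\rX_1) \cap \rC([0,T];V)$ for each such datum.

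For the Lipschitz estimate, let $\tu_i = \tu(\cdot,u_i)$ for $u_1, u_2 \in \obB_{\rX_\gamma}(u_0,r)$ and set $w \coloneqq \tu_1 - \tu_2$. Writing \eqref{eq:transformed parabolic-hyperbolic regularized model} abstractly as $\dt \tu_i + A(t,\tu_i)\tu_i = F(t,\tu_i)$ and subtracting, one gets with $A_0 \coloneqq A(0,u_0)$
\begin{equation*}
    \dt w + A_0 w = [A_0 - A(t,\tu_1)]w + [A(t,\tu_2) - A(t,\tu_1)]\tu_2 + F(t,\tu_1) - F(t,\tu_2), \quad w(0) = u_1 - u_2.
\end{equation*}
The maximal $\rL^p$-regularity of $A_0$ on $\rX_0$ established in \autoref{sec:lin theory} and the uniform nonlinear estimates of \autoref{sec:proof main result} then bound the right-hand side in $\bE_0$ by $\omega(T,r)\|w\|_{\bE_1} + C\|u_1 - u_2\|_{\rX_\gamma}$, where $\omega(T,r) \to 0$ as $T \to 0$. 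A final shrinking of $T$ absorbs the $\|w\|_{\bE_1}$-term into the left-hand side and delivers the asserted inequality.

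The main obstacle is controlling the differences $A(t,\tu_1) - A(t,\tu_2)$ and $F(t,\tu_1) - F(t,\tu_2)$ in $\bE_0$ by $\|w\|_{\bE_1}$ with constants uniform on $\obB_{\rX_\gamma}(u_0,r)$. This requires Lipschitz bounds on the back-transform $Y$ and $\nablaH Y$ as functions of $\tvice$, obtained from the integral representation $X(t,\yH) = \yH + \int_0^t \tvice(s,\yH)\srd s$, uniform invertibility of $X(t,\cdot)$ for small $T$ via the implicit function theorem, and the algebraic dependence of the coefficients in \eqref{eq:coeffs transformed Hibler sea ice para-hyper} on $(\nablaH Y,\teps,\th,\ta)$. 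This is precisely the precise tracking of the change of coordinates alluded to in the introduction and constitutes the technical heart of the estimate.
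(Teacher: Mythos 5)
Your proposal is correct and takes essentially the same route as the paper: the paper proves the corollary by a slight modification of the fixed point argument (following K\"ohne--Pr\"uss--Wilke), showing $\| \Phi_{T,u_1}^R(\overline{u}_1) - \Phi_{T,u_2}^R(\overline{u}_2) \|_{\bE_1} \le \frac{1}{2}\| \overline{u}_1 - \overline{u}_2 \|_{\bE_1} + C \| u_1 - u_2 \|_{\rX_\gamma}$ with the linearization frozen at $u_0$ and then absorbing, which is the same mechanism as your direct subtraction of the two equations, maximal regularity for the frozen operator, and absorption of the small difference terms (using the Lagrangian-coordinate Lipschitz bounds of Section~5.2). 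The only minor deviation is that for the uniform existence time you propose re-linearizing at each $u_1$ and invoking continuity of the maximal regularity constants in $u_1$, which would need an extra justification, whereas the paper avoids this by keeping the operator frozen at $u_0$ and only changing the initial value in the fixed point sets.
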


We finish this section with a blow-up criterion for the solution $\tu$ in the Lagrangian formulation.

\begin{coron}\label{cor:blow-up crit}
Under the assumptions of \autoref{thm:local strong wp Lagrangian coords}, the solution $\tu$ to \eqref{eq:transformed parabolic-hyperbolic regularized model} exists on a maximal time interval of existence $J(u_0) = [0,t_+(u_0))$, where $t_+(u_0)$ is characterized by
\begin{enumerate}[(a)]
    \item global existence, so $t_+(u_0) = \infty$,
    \item $\liminf_{t \to t_+(u_0)} \dist_{\rX_\gamma}(\tu(t),\del V) = 0$, or
    \item $\lim_{t \to t_+(u_0)} \tu(t)$ does not exist in $\rX_\gamma$.
\end{enumerate}
\end{coron}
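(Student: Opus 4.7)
The plan follows the standard blow-up alternative for quasilinear evolution equations with maximal $\rL^p$-regularity. I first set
\begin{equation*}
    t_+(u_0) \coloneqq \sup\{T > 0 : \eqref{eq:transformed parabolic-hyperbolic regularized model} \text{ admits a solution on } [0,T] \text{ in the class of \autoref{thm:local strong wp Lagrangian coords}}\},
\end{equation*}
which is strictly positive by \autoref{thm:local strong wp Lagrangian coords}. Uniqueness of solutions from that theorem ensures that any two local solutions on overlapping intervals coincide on their intersection, so a gluing procedure produces a single solution $\tu$ on $J(u_0) = [0, t_+(u_0))$ belonging to $\rW^{1,p}(0,T;\rX_0) \cap \rL^p(0,T;\rX_1) \cap \rC([0,T];V)$ for every $T < t_+(u_0)$.

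Next, I assume for contradiction that $t_+(u_0) < \infty$ and that neither (b) nor (c) holds. The failure of (c) means that $u_* \coloneqq \lim_{t \to t_+(u_0)^-} \tu(t)$ exists in $\rX_\gamma$, while the failure of (b) produces a constant $c > 0$ with $\dist_{\rX_\gamma}(\tu(t), \del V) \ge c$ for all $t$ close enough to $t_+(u_0)$. Passing to the limit yields $\dist_{\rX_\gamma}(u_*, \del V) \ge c > 0$. Since $\tu(t) \in V$ for $t < t_+(u_0)$, one has $u_* \in \overline{V}$, and combined with the openness of $V \subset \rX_\gamma$ this forces $u_* \in V$.

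I then invoke \autoref{cor:cont dep init data} with $u_*$ in place of the reference initial datum: there exist $r > 0$ and $T_* > 0$ such that $\obB_{\rX_\gamma}(u_*,r) \subset V$, and for each $u_1 \in \obB_{\rX_\gamma}(u_*,r)$ the system \eqref{eq:transformed parabolic-hyperbolic regularized model} with initial value $u_1$ admits a unique solution on $[0, T_*]$ in the regularity class of \autoref{thm:local strong wp Lagrangian coords}. Using a time-translated analogue of this statement (valid under \autoref{ass:external forcing terms} since the external data are locally $\rL^\infty$ in time), I pick $t_0 \in (t_+(u_0) - T_*/2, t_+(u_0))$ with $\tu(t_0) \in \obB_{\rX_\gamma}(u_*,r)$ and solve \eqref{eq:transformed parabolic-hyperbolic regularized model} on $[t_0, t_0 + T_*]$ with initial value $\tu(t_0)$, producing a solution $\widehat{u}$. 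Uniqueness implies that $\widehat{u}$ and $\tu$ coincide on $[t_0, t_+(u_0))$, so their concatenation yields a solution on $[0, t_0 + T_*]$ with $t_0 + T_* > t_+(u_0)$, contradicting the maximality of $t_+(u_0)$.

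The principal technical point is the gluing step: one must verify that the concatenation of $\tu|_{[0,t_0]}$ and $\widehat{u}$ lies in $\rW^{1,p}(0, t_0 + T_*; \rX_0) \cap \rL^p(0, t_0 + T_*; \rX_1)$. This reduces to the matching of traces at $t_0$, which is automatic since $\widehat{u}(t_0) = \tu(t_0)$, combined with the $\rL^p$-integrability of the two pieces. A secondary point is that local existence starting from time $t_0 > 0$ requires a non-zero-initial-time version of \autoref{thm:local strong wp Lagrangian coords}; this is a routine adaptation of the fixed point scheme used there, since the forcing terms in \autoref{ass:external forcing terms} are formulated locally uniformly on $[0,t]$ for every $t > 0$.
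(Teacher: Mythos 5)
Your argument is correct and follows essentially the same route as the paper: assume $t_+(u_0)<\infty$ with both (b) and (c) failing, conclude the limit $u_*$ lies in $V$, restart the solution near $t_+(u_0)$ with a uniform local existence time, and glue by uniqueness to contradict maximality (the paper follows \cite[Corollary~5.1.2]{PS:16} in the same way). The only minor deviation is that you obtain the uniform restart time from the ball around $u_*$ provided by \autoref{cor:cont dep init data}, whereas the paper gets a uniform $\beta>0$ from compactness of the closed trajectory $\tu([0,t_+(u_0)])\subset V$ together with the maximal $\rL^p$-regularity holding at every point of $V$; both versions are adequate, and your handling of the time translation and of the trace matching in the gluing step is at the same level of detail as the paper's.
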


\section{Linear theory}
\label{sec:lin theory}

In this section, we address the linear theory.
More precisely, we establish the boundedness of the $\Hinfty$-calculus of the operator matrix associated to the linearization of the parabolic-hyperbolic sea ice model.
This result is of independent interest due to its interesting functional analytic consequences for the operator matrix such as a characterization of its fractional power domains, or stochastic maximal regularity.
We refer here also to the discussion in \autoref{sec:outlook and open problems}.
As a corollary, we especially obtain the maximal $\rL^p$-regularity.
Note that this (weaker) property is sufficient for the fixed point argument.

First, we recall the $\rL^q$-realization of the linearized Hibler operator as well as the bounded $\Hinfty$-calculus that it admits.
For $u_0 \in \rC^{1,\alpha}(\oOmega)^2 \times \rC^{0,\alpha}(\oOmega) \times \rC^{0,\alpha}(\oOmega)$, with $\alpha > 0$ and $h_0 \ge \kappa$, we recall the linearized Hibler operator $\bAH(u_0)$ from \eqref{eq:lin Hibler op}.
The $\rL^q$-realization of the linearized Hibler operator subject to Dirichlet boundary conditions on $\dOmega$ is then given by
\begin{equation}\label{eq:Lq-realization of the Hibler op}
    \left[\AHD(u_0)\right] \vice \coloneqq \left[\bAH(u_0)\right] \vice, \twith \rD(\AHD(u_0)) \coloneqq \rW^{2,q}(\Omega)^2 \cap \rW_0^{1,q}(\Omega)^2.
\end{equation}
The following result can be obtained analogously as in \cite[Section~4]{BDHH:22}.
Let us observe that the reduced regularity in the $h$- and $a$-component does not affect the arguments as the coefficients remain H\"older-continuous, because they depend on $h$ and $a$ in a smooth way.

\begin{lem}\label{lem:Hinfty Hibler op}
Let $q \in (1,\infty)$ as well as $u_0 \in \rC^{1,\alpha}(\oOmega)^2 \times \rC^{0,\alpha}(\oOmega) \times \rC^{0,\alpha}(\oOmega)$ for some $\alpha > 0$ and with $h_0 \ge \kappa$, and consider $\AHD(u_0)$ as defined in \eqref{eq:Lq-realization of the Hibler op}.
Then there exists $\omega_1 \in \bR$ such that $-\AHD(u_0) + \omega \in \Hinfty(\rL^q(\Omega)^2)$ with $\Hinfty$-angle $\phi_{-\AHD(u_0) + \omega}^\infty < \nicefrac{\pi}{2}$ for all $\omega > \omega_1$.    
\end{lem}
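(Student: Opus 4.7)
The plan is to follow the strategy of \cite{BDHH:22} closely, and to observe that the reduced regularity of $(h_0, a_0)$ does not harm the argument because these components enter the principal coefficients only smoothly through $P(h_0, a_0)$ and the denominator $\trid(\eps(\vii))$. The main ingredients required are parameter ellipticity of the principal symbol with angle strictly less than $\pi/2$, H\"older regularity of all coefficients, and the Lopatinskii-Shapiro condition at the Dirichlet boundary.

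First, I would verify parameter ellipticity of the principal symbol $\xi \mapsto \bigl(\sum_{k,l=1}^2 a_{ij}^{kl}(\eps(\vii), P(h_0, a_0))\,\xi_k \xi_l\bigr)_{i,j=1,2}$. The coefficient matrix in \eqref{eq:coeffs principal part Hibler op} is the $\bS$-weighted symmetric form corrected by a bounded rank-one term originating from the regularization $\trid \geq \sqrt{\delta}$. A direct quadratic-form computation, identical to the one carried out in \cite{BDHH:22}, yields strong (Legendre-Hadamard) ellipticity uniformly on $\oOmega$, and hence parameter ellipticity with angle below $\pi/2$ after a shift $+\omega$.

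Next, I would check the H\"older regularity of all coefficients. Since $\vii \in \rC^{1,\alpha}(\oOmega)^2$, the deformation tensor $\eps(\vii)$ lies in $\rC^{0,\alpha}(\oOmega)^{2\times 2}$. Because $P(h,a) = p^* h \mre^{-c_\bullet(1-a)}$ is smooth in $(h,a)$ and $h_0 \geq \kappa$, the composition $P(h_0, a_0) \in \rC^{0,\alpha}(\oOmega)$ inherits the regularity of $(h_0, a_0)$; no derivatives of $h_0$ or $a_0$ are needed, which is precisely the point where the reduced regularity is harmless. As the map $(\eps, P) \mapsto a_{ij}^{kl}$ is smooth on the range where $\trid \geq \sqrt{\delta}$, all principal coefficients lie in $\rC^{0,\alpha}(\oOmega)$, and the zeroth-order factor $\frac{1}{2 \rice h_0 \trid(\eps(\vii))}$ does as well.

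With these two inputs, the standard freezing-of-coefficients and partition-of-unity procedure reduces the bounded $\Hinfty$-calculus assertion to constant-coefficient operators on $\bR^2$ and on a half-space with homogeneous Dirichlet data, where the theory of \cite{DHP:03} together with the framework of Kunstmann-Weis \cite{KW:04} applies. The Lopatinskii-Shapiro condition has the same structural form as in \cite{BDHH:22} and carries over verbatim. The remaining first-order contribution in \eqref{eq:lin Hibler op} is of strictly lower order relative to the principal part and is absorbed by further enlarging $\omega$ via the standard perturbation theorem for the $\Hinfty$-calculus. The main obstacle is the verification of parameter ellipticity and Lopatinskii-Shapiro in the presence of the regularized viscous-plastic structure, but this is precisely the content of \cite{BDHH:22} and remains valid here without modification.
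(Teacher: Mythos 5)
Your proposal takes essentially the same route as the paper: the paper's proof consists precisely of invoking the arguments of \cite[Section~4]{BDHH:22} together with the observation that the reduced regularity of $(h_0,a_0)$ is harmless because the coefficients depend on $h$ and $a$ smoothly (no derivatives of these components enter the principal part). Your spelled-out steps -- parameter ellipticity of the principal symbol, H\"older continuity of the frozen coefficients, localization with the Lopatinskii--Shapiro condition via \cite{DHP:03}, and absorption of the first-order term by enlarging $\omega$ -- are exactly what that citation encapsulates, so the argument is correct and aligned with the paper.
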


Note that the latter lemma especially implies the maximal $\rL^p$-regularity of $-\AHD(u_0) + \omega$, see for example \cite[Section~4.4]{DHP:03}.
In fact, mimicking the procedure in \cite[Section~4]{BDHH:22}, one can show the maximal regularity of the Hibler operator when merely assuming $u_0 \in \rC^1(\oOmega)^2 \times \rC(\oOmega) \times \rC(\oOmega)$.

Next, let $f_1 \colon (0,T) \times \Omega \to \bR^2$ and $f_2$, $f_3 \colon (0,T) \times \Omega \to \bR$.
Besides, we consider suitable initial data~$u_0 = (\vii,h_0,a_0)$ as well as $u_1 \in \rC^{1,\alpha}(\oOmega)^2 \times \rC^{0,\alpha}(\oOmega) \times \rC^{0,\alpha}(\oOmega)$, for $\alpha > 0$, and with $h_1 \ge \kappa$ and~$a_1 \in (0,1)$.
For~$\omega \ge 0$, the linear problem under investigation is given by
\begin{equation}\label{eq:lin sea ice para-hyper}
    \left\{
    \begin{aligned}
        \dt \tvice - (\bAH(u_1) - \omega) \tvice + B_1(u_1) \binom{\th}{\ta}
        &= f_1, &&\tin (0,T) \times \Omega,\\
        \dt \th + h_1 \divH \tvice + \omega \th
        &= f_2, &&\tin (0,T) \times \Omega,\\
        \dt \ta + a_1 \divH \tvice + \omega \ta
        &= f_3, &&\tin (0,T) \times \Omega,\\
        \tvice 
        &= 0, &&\ton (0,T) \times \del \Omega,\\
        \tvice(0) = v_{\ice,0}, \enspace \th(0) = h_0, \enspace \ta(0) 
        &= a_0, &&\tin \Omega,
    \end{aligned}
    \right.
\end{equation}
where $B_1$ is as defined by $B_1(u_1) \binom{\th}{\ta} = \frac{\del_h P(h_1,a_1)}{2 \rice h_1} \nablaH \th + \frac{\del_a P(h_1,a_1)}{2 \rice h_1} \nablaH \ta$.
The operator matrix corresponding to the linearized problem \eqref{eq:lin sea ice para-hyper} for $\omega = 0$ is given by
\begin{equation}\label{eq:op matrix sea ice para-hyper}
    A(u_1) \coloneqq \begin{pmatrix}
        -\AHD(u_1) & \frac{\del_h P(h_1,a_1)}{2 \rice h_1} \nablaH & \frac{\del_a P(h_1,a_1)}{2 \rice h_1} \nablaH\\
        h_1 \divH & 0 & 0\\
        a_1 \divH & 0 & 0
    \end{pmatrix}.
\end{equation}

The main result of this section asserts that, up to a shift, the operator matrix $A(u_1)$ from \eqref{eq:op matrix sea ice para-hyper} admits a bounded $\Hinfty$-calculus on $\rX_0$.
As a preparation, we briefly recall some theory on the $\Hinfty$-calculus for operator matrices.
To this end, we first invoke the notion of a diagonally dominant block operator matrix.

\begin{defn}\label{def:diag dom op matrix}
Consider Banach spaces $\rX$ and $\rY$ as well as linear operators $A \colon \rD(A) \subset \rX \to \rX$, $B \colon \rD(B) \subset \rY \to \rX$, $C \colon \rD(C) \subset \rX \to \rY$ and $D \colon \rD(D) \subset \rY \to \rY$. 
Moreover, set $\rZ \coloneqq \rX \times \rY$.
If 
\begin{enumerate}[(a)]
    \item the operators $A$ and $D$ are closed, linear and densely defined, and
    \item it holds that $C$ is relatively $A$-bounded and $B$ is relatively $D$-bounded, i.\ e., $\rD(D) \subset \rD(B)$, $\rD(A) \subset \rD(C)$, and there are constants $c_A$, $c_D$, $C \ge 0$ with
    \begin{equation*}
        \begin{aligned}
            \| C x \|_{\rX} 
            &\le c_A \cdot \| A x \|_{\rX} + L \cdot \| x \|_{\rX} \tforall x \in \rD(A), \tand\\
            \| B y \|_{\rX} 
            &\le c_D \cdot \| D y \|_{\rY} + L \cdot \| y \|_{\rY} \tforall y \in \rD(D),
        \end{aligned}
    \end{equation*}
\end{enumerate}
then the operator matrix $\cA \colon \rD(\cA) \coloneqq \rD(A) \times \rD(D) \subset \rZ \to \rZ$ defined by
\begin{equation}\label{eq:block op matrix}
    \cA \binom{x}{y} \coloneqq \begin{pmatrix}
        A & B\\
        C & D
    \end{pmatrix} \binom{x}{y}, \tfor \binom{x}{y} \in \rD(\cA),
\end{equation}
is called \emph{diagonally dominant}.
\end{defn}

The following lemma tailored to our setting is a direct consequence of \cite[Corollary~7.2]{AH:23}.

\begin{lem}\label{lem:Hinfty diag dom op matrix}
Let $\cA$ as defined in \eqref{eq:block op matrix} be diagonally dominant in the sense of \autoref{def:diag dom op matrix}, and assume that $A$ is sectorial on $\rX$ with spectral angle $\phi_A \in [0,\pi)$ as well as $D \in \cL(\rY)$.   
If in addition, $A \in \Hinfty(\rX)$ with $\phi_A^\infty \in [0,\pi)$, then for every $\phi \in (\phi_A^\infty,\pi)$, there is $\omega \ge 0$ such that $\cA + \omega \in \Hinfty(\rZ)$ and $\phi_{\cA + \omega}^\infty \le \phi$.
\end{lem}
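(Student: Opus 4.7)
The plan is to verify the hypotheses of \cite[Corollary~7.2]{AH:23} and apply it to the block matrix $\cA + \omega$ for sufficiently large $\omega \ge 0$. I would first check that diagonal dominance in the sense of \autoref{def:diag dom op matrix} is preserved under shifting: replacing $A$ by $A+\omega$ and $D$ by $D+\omega$ does not affect the domains, and the relative boundedness estimates for $B$ with respect to $D$ and $C$ with respect to $A$ transfer to estimates with respect to $D+\omega$ and $A+\omega$ (with possibly enlarged additive constants $L$).

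Next, I would handle the two diagonal entries. By hypothesis $A \in \Hinfty(\rX)$ with $\phi_A^\infty < \pi$, and for $\omega$ large enough $A + \omega$ is invertible with the same $\Hinfty$-angle, which is a standard shift-invariance statement for the $\Hinfty$-calculus. Concerning the lower-right block, since $D \in \cL(\rY)$ the spectrum of $D + \omega$ for large $\omega$ is contained in a disc centered at $\omega$ disjoint from the origin; hence $D+\omega$ is a bounded invertible sectorial operator on $\rY$ whose spectral and $\Hinfty$-angles can be made arbitrarily small. In particular one may take the $\Hinfty$-angle of $D+\omega$ below any prescribed $\phi \in (\phi_A^\infty,\pi)$.

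With these preparations, \cite[Corollary~7.2]{AH:23} yields for sufficiently large $\omega$ that $\cA + \omega \in \Hinfty(\rZ)$ with $\Hinfty$-angle bounded by the maximum of the two diagonal $\Hinfty$-angles, which here reduces to $\phi_A^\infty$ after taking the $D+\omega$-angle small. Choosing $\omega$ large enough to also accommodate the stated $\phi$ gives $\phi_{\cA+\omega}^\infty \le \phi$, as claimed.

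The main obstacle, or rather the main point of care, is the calibration of $\omega$: it has to be chosen large enough simultaneously to (i) render both $A + \omega$ and $D + \omega$ invertible and sectorial with the right angles, and (ii) control the off-diagonal coupling via the relative bounds from \autoref{def:diag dom op matrix}(b) so that the perturbation theorem behind \cite[Corollary~7.2]{AH:23} applies. Since $D$ is bounded, $B$ is in fact bounded from $\rY$ to $\rX$ and $\|B(D+\omega)^{-1}\|_{\cL(\rY,\rX)} \to 0$ as $\omega \to \infty$, which makes the $B$-side straightforward; the coupling of $C$ to $A+\omega$ is exactly the content the cited corollary is designed to absorb.
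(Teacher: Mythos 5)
Your proposal is correct and follows the same route as the paper, which derives the lemma directly from \cite[Corollary~7.2]{AH:23}; your additional verifications (stability of diagonal dominance under the shift, the $\Hinfty$-calculus of $A+\omega$, and the smallness of the angle of $D+\omega$ for large $\omega$) are exactly the routine checks implicit in the paper's one-line justification. No gaps.
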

Let us note that the diagonal dominance of $\cA$ together with the assumption $D \in \cL(\rY)$ already requires the boundedness of the operator $B$, i.\ e., $B \in \cL(\rY,\rX)$.

We are now in the position to address the main result of this section on the $\Hinfty$-calculus of $A(u_1)$.

\begin{thm}\label{thm:Hinfty sea ice para-hyper}
Let $q \in (2,\infty)$ and $u_1 \in \rC^{1,\alpha}(\oOmega)^2 \times \rW^{1,q}(\Omega) \times \rW^{1,q}(\Omega)$ for some $\alpha > 0$ and with $h_1 \ge \kappa$.
Then there is $\omega_0 \in \bR$ such that for the operator matrix $A(u_1)$ from \eqref{eq:op matrix sea ice para-hyper}, it holds that $A(u_1) + \omega \in \Hinfty(\rX_0)$ with $\phi_{A(u_1) + \omega}^\infty < \nicefrac{\pi}{2}$ for all $\omega > \omega_0$.
\end{thm}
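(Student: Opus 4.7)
The plan is to recast $A(u_1)$ as a diagonally dominant $2 \times 2$ block operator matrix and invoke \autoref{lem:Hinfty diag dom op matrix} together with \autoref{lem:Hinfty Hibler op}. Decompose $\rX_0 = \rX \times \rY$ with $\rX \coloneqq \rL^q(\Omega)^2$ and $\rY \coloneqq \rW^{1,q}(\Omega) \times \rW^{1,q}(\Omega)$, and identify the diagonal blocks as $A \coloneqq -\AHD(u_1)$ with $\rD(A) = \rW^{2,q}(\Omega)^2 \cap \rW_0^{1,q}(\Omega)^2$ and $D \coloneqq 0 \in \cL(\rY)$, while the off-diagonal blocks are
\begin{equation*}
    B \binom{\th}{\ta} = \frac{\del_h P(h_1,a_1)}{2 \rice h_1} \nablaH \th + \frac{\del_a P(h_1,a_1)}{2 \rice h_1} \nablaH \ta, \qquad C \vice = \binom{h_1 \divH \vice}{a_1 \divH \vice}.
\end{equation*}

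I would then check the hypotheses of \autoref{lem:Hinfty diag dom op matrix} one by one. First, since $q > 2$, the Sobolev embedding $\rW^{1,q}(\Omega) \hookrightarrow \rC^{0,\alpha}(\oOmega)$ yields $u_1 \in \rC^{1,\alpha}(\oOmega)^2 \times \rC^{0,\alpha}(\oOmega) \times \rC^{0,\alpha}(\oOmega)$ with $h_1 \ge \kappa$, so \autoref{lem:Hinfty Hibler op} applies and provides $\omega_1 \in \bR$ such that $A + \omega \in \Hinfty(\rX)$ with angle $< \nicefrac{\pi}{2}$ for all $\omega > \omega_1$; the induced sectoriality in particular gives the norm equivalence $\| \vice \|_{\rW^{2,q}} \lesssim \| (A + \omega) \vice \|_{\rX} + \| \vice \|_{\rX}$ by standard elliptic regularity. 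Second, since $q > 2$, the space $\rW^{1,q}(\Omega)$ is a Banach algebra embedded in $\rL^\infty(\Omega)$; combined with $h_1 \ge \kappa$ and smoothness of $P$, the coefficients $\del_h P(h_1,a_1)/(2 \rice h_1)$ and $\del_a P(h_1,a_1)/(2 \rice h_1)$ lie in $\rW^{1,q}(\Omega)$ and multiply boundedly on $\rL^q(\Omega)$, so that $B \in \cL(\rY,\rX)$ and the relative $D$-bound holds trivially with $c_D = 0$. Third, for $\vice \in \rD(A)$ one has $\divH \vice \in \rW^{1,q}(\Omega)$ with $\| \divH \vice \|_{\rW^{1,q}} \le \| \vice \|_{\rW^{2,q}}$, and the Banach algebra property yields the pointwise products $h_1 \divH \vice, a_1 \divH \vice \in \rW^{1,q}(\Omega)$. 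Combined with the elliptic estimate above, this gives
\begin{equation*}
    \| C \vice \|_{\rY} \le C_1 \| \vice \|_{\rW^{2,q}} \le C_2 \bigl( \| A \vice \|_{\rX} + \| \vice \|_{\rX} \bigr),
\end{equation*}
i.e.\ relative $A$-boundedness of $C$.

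With these estimates, the matrix $\cA \coloneqq \bigl(\begin{smallmatrix} A & B \\ C & D \end{smallmatrix}\bigr) = A(u_1)$ is diagonally dominant in the sense of \autoref{def:diag dom op matrix}, and $A - \omega_1$ satisfies the hypotheses of \autoref{lem:Hinfty diag dom op matrix}. Applying that lemma produces $\omega_0 \ge \omega_1$ such that $A(u_1) + \omega \in \Hinfty(\rX_0)$ with $\phi^\infty_{A(u_1)+\omega} < \nicefrac{\pi}{2}$ for every $\omega > \omega_0$, which is the claim.

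\textbf{Main obstacle.} The delicate compatibility condition is that $C$ sends $\rW^{2,q}$ into the codomain $\rY = \rW^{1,q} \times \rW^{1,q}$: this forces the variable coefficients $h_1, a_1$ to act as multipliers on $\rW^{1,q}(\Omega)$, which is exactly where the assumption $q > 2$ enters via the two-dimensional Banach algebra property of $\rW^{1,q}(\Omega)$. Dually, the choice of $\rY$ with the $(h,a)$-components in $\rW^{1,q}$ rather than $\rL^q$ is precisely what makes $D = 0$ lie in $\cL(\rY)$ and, simultaneously, $B \in \cL(\rY,\rX)$; thus the anisotropic ground space from \eqref{eq:ground space and regularity space sea ice para-hyper} is essential for the block-matrix framework to close, rather than being a mere technical convenience.
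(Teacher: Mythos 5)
Your proposal is correct and follows essentially the same route as the paper: the same block decomposition of $\rX_0$ into $\rL^q(\Omega)^2$ and $\rW^{1,q}(\Omega)\times\rW^{1,q}(\Omega)$, the same use of \autoref{lem:Hinfty Hibler op} (via $\rW^{1,q}\hookrightarrow\rC^{0,\beta}$ for $q>2$) for the Hibler block, boundedness of the upper off-diagonal block, and relative $A$-boundedness of the lower block through the Banach algebra property of $\rW^{1,q}(\Omega)$ and the graph-norm equivalence, followed by \autoref{lem:Hinfty diag dom op matrix}. The only blemish is the harmless slip ``$A-\omega_1$'' where the shifted operator $-\AHD(u_1)+\omega$, $\omega>\omega_1$, is meant.
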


\begin{proof}
The idea of the proof is to make use of \autoref{lem:Hinfty diag dom op matrix}.
At this stage, the choice of the anisotropic ground space $\rX_0$ as introduced in \eqref{eq:ground space and regularity space sea ice para-hyper} is crucial in order to establish the diagonal dominance of $A(u_1)$.

For this purpose, we recall from \autoref{lem:Hinfty Hibler op} that for $\omega > \omega_1$, we have $-\AHD(u_1) + \omega \in \Hinfty(\rL^q(\Omega)^2)$ with $\phi_{-\AHD(u_1) + \omega}^\infty < \nicefrac{\pi}{2}$ upon observing that $\rW^{1,q}(\Omega) \hookrightarrow \rC^{0,\beta}(\oOmega)$ for some $\beta > 0$ thanks to $q > 2$.
On the other hand, for such $\omega$, the operator $\diag(\omega \Id,\omega \Id)$ is clearly bounded on $\rW^{1,q}(\Omega) \times \rW^{1,q}(\Omega)$. 
It thus remains to handle the off-diagonal terms.
With regard to the remark following \autoref{lem:Hinfty diag dom op matrix}, we have to show the boundedness of the terms above the diagonal.
The shape of $P(h_1,a_1)$ from \eqref{eq:ice strength} as well as the assumptions on $u_1$ directly yield the existence of $C > 0$ such that
\begin{equation*}
    \left\| \frac{\del_h P(h_1,a_1)}{2 \rice h_1} \nablaH h \right\|_{\rL^q(\Omega)} \le C \cdot \| h \|_{\rW^{1,q}(\Omega)}, \tfor h \in \rW^{1,q}(\Omega),
\end{equation*}
and likewise for the corresponding $a$-term.
Next, let $\vice \in \rD(\AHD(u_1)) = \rW^{2,q}(\Omega)^2 \cap \rW_0^{1,q}(\Omega)$.
Employing the Banach algebra structure of $\rW^{1,q}(\Omega)$ by $q > 2$, and using the equivalence of the norm of $\rW^{2,q}(\Omega)^2$ with the graph norm of $-\AHD(u_1) + \omega$ for $\omega > 0$ sufficiently large, we find that
\begin{equation*}
    \| h_1 \divH \vice \|_{\rW^{1,q}(\Omega)} \le C \cdot \| \vice \|_{\rW^{2,q}(\Omega)} \le C \cdot \| (-\AHD(u_1) + \omega) \vice \|_{\rL^q(\Omega)}.
\end{equation*}
The other term below the diagonal can be handled analogously.
In total, we infer that the operator matrix $A(u_1) + \omega$, with $A(u_1)$ from \eqref{eq:op matrix sea ice para-hyper}, is diagonally dominant.
Thus, for $\omega_0 \ge \omega_1$, we deduce the assertion of the theorem from \autoref{lem:Hinfty diag dom op matrix}.
\end{proof}

Having proved the $\Hinfty$-calculus of the operator matrix associated to the linearization, we now focus on the maximal $\rL^p$-regularity of the linearized problem \eqref{eq:lin sea ice para-hyper}.
To this end, let us briefly introduce the so-called data space $\bE_0$ and the maximal regularity space $\bE_1$.
With $\rX_0$ and~$\rX_1$ from \eqref{eq:ground space and regularity space sea ice para-hyper}, we set
\begin{equation}\label{eq:data and max reg space sea ice para-hyper}
    \bE_0 \coloneqq \rL^p(0,T;\rX_0) \tand \bE_1 \coloneqq \rW^{1,p}(0,T;\rX_0) \cap \rL^p(0,T;\rX_1).
\end{equation}
The maximal regularity result below is a consequence of \autoref{thm:Hinfty sea ice para-hyper}, since a bounded $\Hinfty$-calculus with angle smaller than $\nicefrac{\pi}{2}$ implies the maximal $\rL^p$-regularity on UMD spaces, see e.\ g.\ \cite[Section~4]{DHP:03}, and the present ground space~$\rX_0$ from \eqref{eq:ground space and regularity space sea ice para-hyper} is in particular a UMD space.

\begin{coron}\label{cor:max reg sea ice para-hyper}
Let $T \in (0,\infty]$, $p \in (1,\infty)$, $q \in (2,\infty)$ with $\nicefrac{2}{p} + \nicefrac{1}{q} \neq 2$, and for $u_1 = (v_{\ice,1},h_1,a_1)$, assume $u_1 \in \rC^{1,\alpha}(\oOmega)^2 \times \rW^{1,q}(\Omega) \times \rW^{1,q}(\Omega)$, with $\alpha > 0$, $h_1 \ge \kappa$ and $a_1 \in (0,1)$.
Furthermore, for the time trace space $\rX_\gamma$ and the data space $\bE_0$ as in \eqref{eq:trace space sea ice para-hyper} and \eqref{eq:data and max reg space sea ice para-hyper}, consider $u_0 = (\vii,h_0,a_0) \in \rX_\gamma$ and~$(f_1,f_2,f_3) \in \bE_0$.
Then there exists $\omega_0 \in \bR$ such that for all $\omega > \omega_0$, the linearized problem \eqref{eq:lin sea ice para-hyper} has a unique solution $\tu = (\tvice,\th,\ta) \in \bE_1$, with $\bE_1$ as introduced in \eqref{eq:data and max reg space sea ice para-hyper}.
Besides, there is a constant~$\Cmr > 0$ so that the unique solution $\tu$ of \eqref{eq:lin sea ice para-hyper} satisfies
\begin{equation*}
    \| \tu \|_{\bE_1} \le \Cmr\left(\| (f_1,f_2,f_3) \|_{\bE_0} + \| u_0 \|_{\rX_\gamma}\right).
\end{equation*}
\end{coron}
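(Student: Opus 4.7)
The plan is to deduce the corollary directly from \autoref{thm:Hinfty sea ice para-hyper} via the standard chain of implications \emph{bounded $\Hinfty$-calculus with angle below $\nicefrac{\pi}{2}$} $\Rightarrow$ \emph{$\cR$-sectoriality with $\cR$-angle below $\nicefrac{\pi}{2}$} $\Rightarrow$ \emph{maximal $\rL^p$-regularity}, valid on UMD spaces. First, I would fix $\omega_0 \ge 0$ as in \autoref{thm:Hinfty sea ice para-hyper} so that $A(u_1) + \omega \in \Hinfty(\rX_0)$ with $\phi_{A(u_1)+\omega}^\infty < \nicefrac{\pi}{2}$ for all $\omega > \omega_0$. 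Next, I would verify that the anisotropic ground space $\rX_0 = \rL^q(\Omega)^2 \times \rW^{1,q}(\Omega) \times \rW^{1,q}(\Omega)$ is UMD: each factor is a closed subspace of a Bochner $\rL^q$-space built over the UMD scalar field with $q \in (2,\infty)$, and finite products of UMD spaces are UMD. With this in hand, I would invoke the Kalton--Weis/Dore--Venni type result (as e.\ g.\ recorded in \cite[Section~4]{DHP:03}) to obtain maximal $\rL^p$-regularity of $A(u_1) + \omega$ on $\rX_0$ for $p \in (1,\infty)$.

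Given maximal $\rL^p$-regularity, the operator
\begin{equation*}
    \Bigl(\frac{\rd}{\rd t} + A(u_1) + \omega, \tr\Bigr) \colon \bE_1 \longrightarrow \bE_0 \times \rX_\gamma
\end{equation*}
is an isomorphism, where $\rX_\gamma = (\rX_0,\rX_1)_{1-\nicefrac{1}{p},p}$. Here one needs the identification of the trace space stated in \eqref{eq:trace space sea ice para-hyper}; the condition $\nicefrac{2}{p} + \nicefrac{1}{q} \neq 2$ serves precisely to avoid the critical regularity $2 - \nicefrac{2}{p} = \nicefrac{1}{q}$, at which the Dirichlet trace on $\dOmega$ is not well-defined in the Besov scale, so that the space $\rB_{qp,\rD}^{2-\nicefrac{2}{p}}(\Omega)^2$ makes sense and coincides with the real interpolation space of the $v$-component. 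For the lower-order $h$- and $a$-components no boundary condition is imposed, and the real interpolation of $\rW^{1,q}(\Omega)$ with itself returns $\rW^{1,q}(\Omega)$, matching \eqref{eq:trace space sea ice para-hyper}.

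The unique solvability of \eqref{eq:lin sea ice para-hyper} and the estimate with constant $\Cmr$ then follow from this isomorphism applied to the data $(f_1,f_2,f_3,u_0)$, after shifting the right-hand side by the bounded perturbation $\omega \cdot \Id$ in the $h$- and $a$-equations; since $\omega$ acts as a bounded operator on $\rW^{1,q}(\Omega)$, the shift does not affect the isomorphism property nor the bound (up to enlarging $\Cmr$). The resulting inequality
\begin{equation*}
    \| \tu \|_{\bE_1} \le \Cmr \bigl(\| (f_1,f_2,f_3) \|_{\bE_0} + \| u_0 \|_{\rX_\gamma}\bigr)
\end{equation*}
is then the standard open-mapping consequence of the isomorphism.

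I do not anticipate any serious obstacle: the substance of the argument is already contained in \autoref{thm:Hinfty sea ice para-hyper}, and the step from $\Hinfty$-calculus to maximal $\rL^p$-regularity is a textbook application on UMD spaces. The only point requiring slight care is the precise identification of $\rX_\gamma$, where one must check the non-critical condition on $(p,q)$ in order to encode the Dirichlet boundary condition correctly in the real interpolation space.
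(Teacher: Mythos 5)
Your proposal is correct and matches the paper's own argument: the paper likewise deduces maximal $\rL^p$-regularity of the shifted operator matrix directly from \autoref{thm:Hinfty sea ice para-hyper}, using that a bounded $\Hinfty$-calculus with angle below $\nicefrac{\pi}{2}$ implies maximal $\rL^p$-regularity on the UMD space $\rX_0$ (via \cite[Section~4]{DHP:03}), and then solves \eqref{eq:lin sea ice para-hyper} through the standard isomorphism onto $\bE_0 \times \rX_\gamma$ with the trace space identification \eqref{eq:trace space sea ice para-hyper}. Your additional remarks on the intermediate $\cR$-sectoriality step and on the role of $\nicefrac{2}{p} + \nicefrac{1}{q} \neq 2$ for the Dirichlet condition in the interpolation space are consistent with this.
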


\begin{rem}
The assertion of \autoref{cor:max reg sea ice para-hyper} can also be obtained by a direct perturbation argument when only assuming $u_1 \in \rC^1(\oOmega)^2 \times \rW^{1,q}(\Omega) \times \rW^{1,q}(\Omega)$, see also \cite[Proposition~6.2.1]{Bra:24}.
\end{rem}

In the sequel, we fix $\omega > \omega_0$ for $\omega_0 > 0$ from \autoref{cor:max reg sea ice para-hyper}.
Note that the maximal regularity constant~$\Cmr > 0$ from \autoref{cor:max reg sea ice para-hyper} generally depends on $T$.
However, it can be chosen independent of~$T > 0$ provided homogeneous initial values are considered.
In order reduce the considerations to this situation, we now discuss the so-called reference solution capturing the initial values.
More precisely, for~$u_0 \in V$, the resulting system reads as
\begin{equation}\label{eq:lin for ref sol sea ice para-hyper}
    \left\{
    \begin{aligned}
        \dt \tvice - (\bAH(u_0) - \omega) \tvice + B_1(u_0) \binom{\th}{\ta}
        &= 0, &&\tin (0,T) \times \Omega,\\
        \dt \th + h_0 \divH \tvice + \omega \th
        &= 0, &&\tin (0,T) \times \Omega,\\
        \dt \ta + a_0 \divH \tvice + \omega \ta
        &= 0, &&\tin (0,T) \times \Omega,\\
        \tvice 
        &= 0, &&\ton (0,T) \times \del \Omega,\\
        \tvice(0) = v_{\ice,0}, \enspace \th(0) = h_0, \enspace \ta(0) 
        &= a_0, &&\tin \Omega.
    \end{aligned}
    \right.
\end{equation}

The result below on the existence of a unique reference solution to \eqref{eq:lin for ref sol sea ice para-hyper} follows from \autoref{cor:max reg sea ice para-hyper}.

\begin{prop}\label{prop:ref sol sea ice para-hyper}
Let $p, q \in (1,\infty)$ satisfy \eqref{eq:cond p and q sea ice para-hyper}, and consider $0 < T \le \infty$ and $u_0 = (\vii,h_0,a_0) \in V$, with $V$ from \eqref{eq:open set V sea ice para-hyper}.
Then \eqref{eq:lin for ref sol sea ice para-hyper} admits a unique solution $u_0^* = (\vii^*,h_0^*,a_0^*) \in \bE_1$, with $\bE_1$ from \eqref{eq:data and max reg space sea ice para-hyper}.
\end{prop}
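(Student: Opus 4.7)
The plan is to read \eqref{eq:lin for ref sol sea ice para-hyper} as the special case of the general linear system \eqref{eq:lin sea ice para-hyper} in which the coefficient state is frozen at $u_1 := u_0$, the inhomogeneities are set to $(f_1,f_2,f_3) := 0$, and the initial value is retained. Since zero trivially lies in $\bE_0$, the whole statement then reduces to applying \autoref{cor:max reg sea ice para-hyper} once we verify its hypotheses for this concrete choice of data.

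First I would check the parameter side: from $1/p + 1/q < 1/2$ in \eqref{eq:cond p and q sea ice para-hyper} we get $1/q < 1/2$, hence $q \in (2,\infty)$, and moreover $2/p + 1/q \le 2(1/p + 1/q) < 1 \ne 2$, so the non-critical restriction required in \autoref{cor:max reg sea ice para-hyper} is automatic. Next I would check the structural regularity of the frozen state: the assumption $u_0 \in V \subset \rX_\gamma$ together with the Sobolev embedding \eqref{eq:emd trace space sea ice para-hyper} yields $\vii \in \rC^{1,\alpha}(\oOmega)^2$ for some $\alpha > 0$, while $h_0, a_0 \in \rW^{1,q}(\Omega)$ is built directly into the definition of $\rX_\gamma$ in \eqref{eq:trace space sea ice para-hyper}. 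Finally, the pointwise constraints $h_0 \ge \kappa$ (in fact $h_0 > \kappa$) and $a_0 \in (0,1)$ demanded by the corollary are precisely what distinguishes $V$ inside $\rX_\gamma$, see \eqref{eq:open set V sea ice para-hyper}.

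With all hypotheses confirmed and with $\omega > \omega_0$ fixed as in the paragraph following \autoref{cor:max reg sea ice para-hyper}, that corollary yields a unique $u_0^* = (\vii^*, h_0^*, a_0^*) \in \bE_1$ solving \eqref{eq:lin for ref sol sea ice para-hyper}, which is exactly the claim. There is no real analytic obstacle here: the proposition is essentially a bookkeeping step that isolates the contribution of the initial data, so that in \autoref{sec:proof main result} the nonlinear fixed-point argument can be formulated for homogeneous initial conditions and the maximal-regularity constant can be taken independent of $T$.
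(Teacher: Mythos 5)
Your proposal is correct and matches the paper's argument, which likewise obtains the reference solution directly from \autoref{cor:max reg sea ice para-hyper} applied with frozen coefficients $u_1=u_0$, zero right-hand sides and initial datum $u_0$. Your explicit verification of the hypotheses ($q>2$ and $\nicefrac{2}{p}+\nicefrac{1}{q}\neq 2$ from \eqref{eq:cond p and q sea ice para-hyper}, the regularity of $u_0$ via \eqref{eq:emd trace space sea ice para-hyper}, and the constraints encoded in $V$) is exactly the bookkeeping the paper leaves implicit.
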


The importance of the reference solution $u_0^*$ for the arguments later on calls for additional remarks.

\begin{rem}\label{rem:ref sol sea ice para-hyper}
\begin{enumerate}[(a)]
    \item The reference solution $u_0^*$ results from an application of the semigroup generated by~$A(u_0) + \omega$, with $A(u_0)$ as defined in \eqref{eq:op matrix sea ice para-hyper}, to the initial values $u_0$, so $u_0^*(t) = \mre^{-(A(u_0) + \omega)(t)} u_0$.
    \item The norm of $u_0^*$ will be denoted by $C_T^* \coloneqq \| u_0^* \|_{\bE_1}$, and we observe that $C_T^* \to 0$ as $T \to 0$.
    \item From~(a), it follows that $\| u_0^* - u_0 \|_{\rBUC([0,T];\rX_\gamma)} \to 0$ as $T \to 0$.
    In particular, as $V \subset \rX_\gamma$ is open, there is a sufficiently small $r_0 > 0$ with $\obB_{\rX_\gamma}(u_0,r_0) \subset V$ for given $u_0 \in V$.
    Thus, $u_0^*(t) \in V$ holds for all $t \in [0,T_0]$, for $T_0 > 0$ sufficiently small, since
    \begin{equation}\label{eq:ref sol close to init data sea ice para-hyper}
        \sup_{t \in [0,T_0]} \| u_0^*(t) - u_0 \|_{\rX_\gamma} \le \frac{r_0}{2}.
    \end{equation}
\end{enumerate}
\end{rem}

\section{Proof of the main result}\label{sec:proof main result}

This final section presents the proof of the main result.
For this, we first elaborate on the fixed point argument, then provide the required estimates and conclude the proof of \autoref{thm:local strong wp Lagrangian coords} by the contraction mapping principle.
The final step is to deduce the proof of \autoref{thm:local strong wp sea ice para-hyper} from there.

\subsection{Reformulation as a fixed point problem}\label{ssec:fixed point probl}
\

The aim of this subsection is to reformulate the task of finding a solution to the transformed system~\eqref{eq:transformed parabolic-hyperbolic regularized model} as a fixed point problem.
Thus, consider a solution $\tu = (\tvice,\th,\ta)$ to \eqref{eq:transformed parabolic-hyperbolic regularized model}, recall the reference solution $u_0^* = (\vii^*,h_0^*,a_0^*)$ from \autoref{prop:ref sol sea ice para-hyper} and define $\hu = (\hvice,\hh,\ha)$ by $\hu \coloneqq \tu - u_0^*$.
Then $\hu$ solves
\begin{equation}\label{eq:linearized eq for fixed point sea ice para-hyper}
    \left\{
    \begin{aligned}
        \dt \hvice - (\bAH(u_0) - \omega) \hvice + B_1(u_0) \binom{\hh}{\ha}
        &= F_1(\hu), &&\tin (0,T) \times \Omega,\\
        \dt \hh + h_0 \divH \hvice + \omega \hh
        &= F_2(\hu), &&\tin (0,T) \times \Omega,\\
        \dt \ha + a_0 \divH \hvice + \omega \ha
        &= F_3(\hu), &&\tin (0,T) \times \Omega,\\
        \hvice 
        &= 0, &&\ton (0,T) \times \del \Omega,\\
        \hvice(0) = 0, \enspace \hh(0) = 0, \enspace \ha(0) 
        &= 0, &&\tin \Omega,
    \end{aligned}
    \right.
\end{equation}
for the right-hand sides
\begin{equation}\label{eq:right-hand sides sea ice para-hyper}
    \begin{aligned}
        F_1(\hu)
        &\coloneqq \bigl(\tbAH(\tu) - \bAH(u_0)\bigr)\tvice - \bigl(\tB(\tu) - B_1(u_0)\bigr)\binom{\th}{\ta} + \omega \tvice - \ccor \tvice^\perp - g \nablaH H\\
        &\quad + \frac{1}{\rice \th}\bigl(\tatm + \tocn(\tvice)\bigr),\\
        F_2(\hu)
        &\coloneqq h_0 \divH \tvice - \th \sum_{j,k=1}^2 (\del_j Y_k) \del_k \tv_{\ice,j} + \omega \th + \Sh(\th,\ta) \tand\\
        F_3(\hu)
        &\coloneqq a_0 \divH \tvice - \ta \sum_{j,k=1}^2 (\del_j Y_k) \del_k \tv_{\ice,j} + \omega \ta + \Sa(\th,\ta).
    \end{aligned}
\end{equation}

In the following, we denote by $\prescript{}{0}{\bE_1}$ the elements in the maximal regularity space $\bE_1$ from \eqref{eq:data and max reg space sea ice para-hyper} with homogeneous initial values, so $\hu \in \prescript{}{0}{\bE_1}$ fulfills $\hu(0) = 0$.
With regard to \eqref{eq:linearized eq for fixed point sea ice para-hyper}, the solution to the transformed problem $\tu$ results from the addition of $\hu \in \bE_1$ and the reference solution $u_0^*$.
We demand that this solution also only takes values in the physically relevant range, so it should hold that $\tu(t) \in V$ for all $t \in [0,T_1]$, where $T_1 > 0$ will be determined in the sequel.
Recall that given $u_0 \in V$, there exists~$r_0 > 0$ so that $\obB_{\rX_\gamma}(u_0,r_0) \subset V$.
For $T_0 > 0$ as in \autoref{rem:ref sol sea ice para-hyper}, we especially have $u_0^*(t) \in V$ for all~$t \in [0,T_0]$.
Moreover, we deduce from \cite[Theorem~III.4.10.2]{Ama:95} the embedding
\begin{equation}\label{eq:emb max reg space into BUC Xgamma sea ice para-hyper}
    \prescript{}{0}{\bE_1} \hookrightarrow \rBUC([0,T_0];\rX_\gamma).
\end{equation}
In the above, $\rBUC([0,T_0];\rX_\gamma)$ represents the bounded and uniformly continuous functions in time with values in $\rX_\gamma$.
Combining \eqref{eq:ref sol close to init data sea ice para-hyper} and the choice $R_0 \le \nicefrac{r_0}{2 C}$ for the embedding constant $C > 0$ from \eqref{eq:emb max reg space into BUC Xgamma sea ice para-hyper}, we conclude for $\hu \in \prescript{}{0}{\bE_1}$ with $\| \hu \|_{\bE_1} \le R_0$ that
\begin{equation*}
    \sup_{t \in [0,T_0]} \| \tu(t) - u_0 \|_{\rX_\gamma} \le C \cdot \| \hu \|_{\bE_1} + \sup_{t \in [0,T_0]} \| u_0^*(t) - u_0 \|_{\rX_\gamma} \le \frac{r_0}{2} + \frac{r_0}{2} \le r_0.
\end{equation*}

The preceding discussion can be summarized in the lemma below.

\begin{lem}\label{lem:final sol contained open set V sea ice para-hyper}
Let $u_0 \in V$, $T_0 > 0$ as introduced in \autoref{rem:ref sol sea ice para-hyper} and $0 < R_0 \le \nicefrac{r_0}{2 C}$, with $C > 0$ representing the embedding constant from \eqref{eq:emb max reg space into BUC Xgamma sea ice para-hyper}, and $r_0 > 0$ is such that $\obB_{\rX_\gamma}(u_0,r_0) \subset V$.
Consider $T \in (0,T_0]$, $R \in (0,R_0]$ and $\tu \coloneqq \hu + u_0^*$, where $\hu \in \prescript{}{0}{\bE_1}$ with $\| \hu \|_{\bE_1} \le R$, and $u_0^* \in \bE_1$ denotes the reference solution from \autoref{prop:ref sol sea ice para-hyper}.
Then $\tu(t) \in V$ for all $t \in [0,T]$.
\end{lem}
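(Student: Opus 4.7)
The plan is to bound $\| \tu(t) - u_0 \|_{\rX_\gamma}$ by $r_0$ uniformly in $t \in [0,T]$, which together with $\obB_{\rX_\gamma}(u_0,r_0) \subset V$ immediately yields $\tu(t) \in V$ for all such $t$. Since $\tu = \hu + u_0^*$, the triangle inequality gives
\begin{equation*}
    \| \tu(t) - u_0 \|_{\rX_\gamma} \le \| \hu(t) \|_{\rX_\gamma} + \| u_0^*(t) - u_0 \|_{\rX_\gamma}
\end{equation*}
for each $t \in [0,T]$, so it suffices to control the two summands separately by $\nicefrac{r_0}{2}$.

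First, for the $\hu$-term, I invoke the embedding \eqref{eq:emb max reg space into BUC Xgamma sea ice para-hyper} with constant $C > 0$. Since $\hu \in \prescript{}{0}{\bE_1}$ and $[0,T] \subset [0,T_0]$, this yields
\begin{equation*}
    \sup_{t \in [0,T]} \| \hu(t) \|_{\rX_\gamma} \le C \cdot \| \hu \|_{\bE_1} \le C R \le C R_0 \le \frac{r_0}{2},
\end{equation*}
where the last inequality uses the standing assumption $R_0 \le \nicefrac{r_0}{2C}$.

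For the $u_0^*$-term, I simply use \eqref{eq:ref sol close to init data sea ice para-hyper} from \autoref{rem:ref sol sea ice para-hyper}, which provides
\begin{equation*}
    \sup_{t \in [0,T_0]} \| u_0^*(t) - u_0 \|_{\rX_\gamma} \le \frac{r_0}{2},
\end{equation*}
and restricting to $t \in [0,T] \subset [0,T_0]$ gives the same bound. Adding the two estimates produces $\| \tu(t) - u_0 \|_{\rX_\gamma} \le r_0$ uniformly on $[0,T]$, hence $\tu(t) \in \obB_{\rX_\gamma}(u_0,r_0) \subset V$.

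There is no real obstacle here: the lemma is a direct book-keeping consequence of the continuous embedding of the maximal regularity space into $\rBUC$-in-time, the choice of $R_0$ relative to $r_0$ and $C$, and the smallness of $T_0$ used to place the reference solution near $u_0$. The only point requiring minor care is to ensure that the embedding constant $C$ in \eqref{eq:emb max reg space into BUC Xgamma sea ice para-hyper} is independent of the time horizon $T \in (0,T_0]$ when applied to functions with vanishing trace at $t = 0$, which is exactly the reason $\prescript{}{0}{\bE_1}$ is used rather than $\bE_1$.
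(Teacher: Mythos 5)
Your proof is correct and coincides with the paper's own argument: the same triangle-inequality splitting of $\| \tu(t) - u_0 \|_{\rX_\gamma}$, the $T$-independent embedding \eqref{eq:emb max reg space into BUC Xgamma sea ice para-hyper} combined with $R_0 \le \nicefrac{r_0}{2C}$, and the bound \eqref{eq:ref sol close to init data sea ice para-hyper} for the reference solution, yielding $\tu(t) \in \obB_{\rX_\gamma}(u_0,r_0) \subset V$. Nothing is missing.
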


Next, we describe the fixed point argument in more details.
Let $p, q \in (1,\infty)$ be such that \eqref{eq:cond p and q sea ice para-hyper} is valid, and consider $T_0 > 0$ and $R_0 > 0$ from \autoref{lem:final sol contained open set V sea ice para-hyper}.
For $T \in (0,T_0]$ and $R \in (0,R_0]$, we then define
\begin{equation}\label{eq:set and map fixed point arg sea ice para-hyper}
    \cK_T^R \coloneqq \{\overline{u} \in \prescript{}{0}{\bE_1} : \| \overline{u} \|_{\bE_1} \le R\} \tand \Phi_T^R \colon \cK_T^R \to \prescript{}{0}{\bE_1}, \twith \Phi_T^R(\overline{u}) \coloneqq \hu,
\end{equation}
where $\hu$ represents the unique solution to \eqref{eq:linearized eq for fixed point sea ice para-hyper} with right-hand sides $F_1(\overline{u})$, $F_2(\overline{u})$ and $F_3(\overline{u})$ as introduced in \eqref{eq:right-hand sides sea ice para-hyper}.
By the maximal regularity result, \autoref{cor:max reg sea ice para-hyper}, the map $\Phi_T^R$ is well-defined provided the terms on the right-hand side lie in the data space $\bE_0$ as introduced in \eqref{eq:data and max reg space sea ice para-hyper}.

\subsection{Estimates of the nonlinear terms}\label{ssec:ests of nonlin terms}
\

This subsection presents the estimates of the nonlinear terms $F_1$, $F_2$ and $F_3$.
For this, we require some further preparation, and we start by discussing properties of the transformation to Lagrangian coordinates.
Recall that
\begin{equation}\label{eq:diffeo X Euler Lagrange sea ice para-hyper}
    X(t,\yH) = \yH + \int_0^t \tvice(s,\yH) \srd s.
\end{equation}
Moreover, for the inverse $Y(t,\cdot)$ of $X(t,\cdot)$, we have $\nablaH Y(t,X(t,\yH)) = [\nablaH X]^{-1}(t,\yH)$.
Aiming to track the dependence of the diffeomorphisms $X$ and $Y$ on $\tvice$, we also use the notation $X_{\tvice}$ and $Y_{\tvice}$.
In the sequel, $\tvice$ results from the addition of $\hu \in \prescript{}{0}{\bE_1}$ and the reference solution $u_0^*$ from \autoref{prop:ref sol sea ice para-hyper}.
We also introduce the notation $\bE_1 = \bE_1^v \times \bE_1^h \times \bE_1^a$, and we conclude that
\begin{equation}\label{eq:bound of the norm tu sea ice para-hyper}
    \| \tu \|_{\bE_1} \le R + C_T^* \le R_0 + C_{T_0}^*, \tso \| \tvice \|_{\bE_1^v} \le R_0 + C_{T_0}^*.
\end{equation}
Given $X = X_{\tvice}$ as in \eqref{eq:diffeo X Euler Lagrange sea ice para-hyper} and $p' \in (1,\infty)$ representing the H\"older conjugate of $p$, we deduce from H\"older's inequality and \eqref{eq:bound of the norm tu sea ice para-hyper} that
\begin{equation}\label{eq:est of nablaH X - Id sea ice para-hyper}
    \sup_{t\in [0,T]} \| \nablaH X_{\tvice} - \Id_2 \|_{\rW^{1,q}(\Omega)} \le C \int_0^T \| \nablaH \tvice(t,\cdot) \|_{\rW^{1,q}(\Omega)} \le C T^{\nicefrac{1}{p'}} (R_0 + C_{T_0}^*).
\end{equation}
By virtue of the embedding $\rW^{1,q}(\Omega) \hookrightarrow \rL^\infty(\Omega)$, thanks to $q > 2$, we infer that there is $T_0' > 0$ small with
\begin{equation}\label{eq:est of nablaHX - Id in Linfty}
    \sup_{t \in [0,T_0']} \| \nablaH X_{\tvice} - \Id_2 \|_{\rL^\infty(\Omega)} \le \frac{1}{2}.
\end{equation}
As a result, $\nablaH X_{\tvice}(t,\cdot)$ is invertible for all $t \in [0,T_0']$, and $\nablaH Y_{\tvice}(t,\cdot)$ thus exists on $[0,T_0']$.
It also follows from $\dt \nablaH X_{\tvice}(t,\yH) = \nablaH \tvice(t,\yH)$ and \eqref{eq:bound of the norm tu sea ice para-hyper} that
\begin{equation*}
    \| \dt \nablaH X_{\tvice}(t,\cdot) \|_{\rL^p(0,T;\rW^{1,q}(\Omega))} \le C \cdot \| \tvice \|_{\bE_1^v} \le C (R_0 + C_{T_0}^*).
\end{equation*}
Similarly, for $\tu_i = \hu_i + u_0^*$, with $\hu_i \in \prescript{}{0}{\bE_1}$ and $u_0^*$ representing the reference solution from \autoref{prop:ref sol sea ice para-hyper}, we denote the diffeomorphisms related to $\tv_{\ice,i}$ by $X^{(i)}$ and $Y^{(i)}$ for $i=1,2$.
As in \eqref{eq:est of nablaH X - Id sea ice para-hyper}, it follows that
\begin{equation*}
    \sup_{t \in [0,T]} \| \nablaH X^{(1)} - \nablaH X^{(2)} \|_{\rL^\infty(\Omega)} + \sup_{t \in [0,T]} \| \nablaH X^{(1)} - \nablaH X^{(2)} \|_{\rW^{1,q}(\Omega)} \le C T^{\nicefrac{1}{p'}} \cdot \| \hu_1 - \hu_2 \|_{\bE_1}.
\end{equation*}
In addition, the identity $\dt (\nablaH X^{(1)} - \nablaH X^{(2)}) = \nablaH (\hv_{\ice,1} - \hv_{\ice,2})$ leads to
\begin{equation*}
    \| \dt \nablaH(X^{(1)} - X^{(2)}) \|_{\rL^p(0,T;\rW^{1,q}(\Omega))} \le C \cdot \| \hu_1 - \hu_2 \|_{\bE_1}.
\end{equation*}

We collect these considerations in the lemma below, where we additionally include estimates of $\nablaH Y$.

\begin{lem}\label{lem:props of the trafo Euler Lagrange sea ice para-hyper}
Let $p, q \in (1,\infty)$ satisfy \eqref{eq:cond p and q sea ice para-hyper}, and consider $T \in (0,T_1]$, where $T_1 \coloneqq \min\{T_0,T_0'\} > 0$, for $T_0 > 0$ chosen in \eqref{eq:ref sol close to init data sea ice para-hyper} and $T_0' > 0$ related to \eqref{eq:est of nablaHX - Id in Linfty}.
Moreover, let $R \in (0,R_0]$, with $R_0 > 0$ as in \autoref{lem:final sol contained open set V sea ice para-hyper}, and recall the reference solution $u_0^*$ from \autoref{prop:ref sol sea ice para-hyper}.
\begin{enumerate}[(a)]
    \item Consider $\tu \coloneqq \hu + u_0^*$, where $\hu \in \prescript{}{0}{\bE_1}$ with $\| \hu \|_{\bE_1} \le R$.
    Then for $X_{\tvice}$ as in \eqref{eq:diffeo X Euler Lagrange sea ice para-hyper} as well as~$\nablaH Y_{\tvice} \coloneqq [\nablaH X_{\tvice}]^{-1}$, whose existence is guaranteed by \eqref{eq:est of nablaHX - Id in Linfty}, for some $C > 0$, we get
    \begin{equation*}
        \begin{aligned}
            \| \nablaH X_{\tvice} \|_{\rW^{1,p}(0,T;\rW^{1,q}(\Omega))} + \| \nablaH X_{\tvice} \|_{\rL^\infty(0,T;\rW^{1,q}(\Omega))} 
            &\le C \tand\\
            \| \nablaH Y_{\tvice} \|_{\rW^{1,p}(0,T;\rW^{1,q}(\Omega))} + \| \nablaH Y_{\tvice} \|_{\rL^\infty(0,T;\rW^{1,q}(\Omega))} 
            &\le C.
        \end{aligned}
    \end{equation*}
    \item Let $\tu_i = \hu_i + u_0^*$, with $\hu_i \in \prescript{}{0}{\bE_1}$ and $\| \hu_i \|_{\bE_1} \le R$, $i=1,2$.
    Then for $X^{(1)}$, $X^{(2)}$, $Y^{(1)}$ and $Y^{(2)}$, there is a constant $C > 0$ such that
    \begin{equation*}
        \begin{aligned}
            \| \nablaH (X^{(1)} - X^{(2)}) \|_{\rW^{1,p}(0,T;\rW^{1,q}(\Omega))}+ \| \nablaH (X^{(1)} - X^{(2)}) \|_{\rL^\infty(0,T;\rW^{1,q}(\Omega))}
            &\le C \cdot \| \hu_1 - \hu_2 \|_{\bE_1} \tand\\
            \| \nablaH (Y^{(1)} - Y^{(2)}) \|_{\rW^{1,p}(0,T;\rW^{1,q}(\Omega))} + \| \nablaH (Y^{(1)} - Y^{(2)}) \|_{\rL^\infty(0,T;\rW^{1,q}(\Omega))} 
            &\le C \cdot \| \hu_1 - \hu_2 \|_{\bE_1}.
        \end{aligned}
    \end{equation*}
\end{enumerate}
\end{lem}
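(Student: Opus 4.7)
\medskip

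The plan is to leverage the fundamental identity $\nabla_H X_{\tvice}(t,\yH) = \Id_2 + \int_0^t \nabla_H \tvice(s,\yH) \srd s$, together with the Neumann-series type representation of $\nabla_H Y$, and to exploit throughout the Banach algebra property of $\rW^{1,q}(\Omega)$ (valid since $q>2$ and $\Omega \subset \bR^2$).

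For part (a), the bounds on $\nabla_H X_{\tvice}$ have essentially been derived in the discussion preceding the statement: Hölder's inequality in time and the bound \eqref{eq:bound of the norm tu sea ice para-hyper} control $\sup_{t \in [0,T]} \| \nabla_H X_{\tvice} - \Id_2 \|_{\rW^{1,q}(\Omega)}$, while the identity $\dt \nabla_H X_{\tvice} = \nabla_H \tvice$ together with $\tvice \in \bE_1^v$ yields the $\rW^{1,p}(0,T;\rW^{1,q}(\Omega))$ bound. For $\nabla_H Y_{\tvice}$, I would use the smallness estimate \eqref{eq:est of nablaHX - Id in Linfty} on $\nabla_H X_{\tvice}-\Id_2$ in $\rL^\infty$ to write $\nabla_H Y_{\tvice}$ as a Neumann series, convergent in $\rL^\infty(0,T;\rW^{1,q}(\Omega))$ by the Banach algebra property; the same property, applied termwise, yields the $\rL^\infty$-in-time bound on $\nabla_H Y_{\tvice}$. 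For the $\rW^{1,p}(0,T;\rW^{1,q}(\Omega))$ bound, I would use the pointwise in time identity
\begin{equation*}
    \dt \nabla_H Y_{\tvice}(t,\cdot) = -\nabla_H Y_{\tvice}(t,\cdot) \cdot \dt \nabla_H X_{\tvice}(t,\cdot) \cdot \nabla_H Y_{\tvice}(t,\cdot),
\end{equation*}
and then take the $\rL^p(0,T;\rW^{1,q}(\Omega))$ norm, absorbing the $\rL^\infty$-in-time factors coming from $\nabla_H Y_{\tvice}$ via the Banach algebra estimate.

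For part (b), the difference estimates on $\nabla_H(X^{(1)}-X^{(2)})$ follow by the same strategy, now applied to $X^{(1)} - X^{(2)} = \int_0^t (\tv_{\ice,1} - \tv_{\ice,2})$ and its time derivative $\nabla_H(\hv_{\ice,1} - \hv_{\ice,2})$, as sketched in the text above the lemma. To obtain the corresponding estimates for $\nabla_H Y^{(1)} - \nabla_H Y^{(2)}$, I would use the resolvent-type identity
\begin{equation*}
    \nabla_H Y^{(1)} - \nabla_H Y^{(2)} = \nabla_H Y^{(1)} \cdot \bigl(\nabla_H X^{(2)} - \nabla_H X^{(1)}\bigr) \cdot \nabla_H Y^{(2)},
\end{equation*}
and insert the already established bounds on $\nabla_H Y^{(i)}$ from part~(a). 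The $\rL^\infty$-in-time estimate is then immediate from the Banach algebra property and the $X$-difference estimate. For the $\rW^{1,p}(0,T;\rW^{1,q}(\Omega))$-estimate, I would differentiate the above identity in time, producing three terms that are each handled by combining the part~(a) bounds in $\rL^\infty(0,T;\rW^{1,q}(\Omega))$ for two factors with an $\rL^p(0,T;\rW^{1,q}(\Omega))$ bound on the remaining factor.

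The technical heart of the argument is maintaining clean control of the Neumann series and of the identity $\dt[A^{-1}] = -A^{-1}(\dt A)A^{-1}$ simultaneously in $\rL^\infty(0,T;\rW^{1,q}(\Omega))$ and $\rW^{1,p}(0,T;\rW^{1,q}(\Omega))$. The expected main obstacle is ensuring that all the mixed-norm products close up, which is exactly what the Banach algebra structure of $\rW^{1,q}(\Omega)$ (guaranteed by $q>2$ on a two-dimensional domain), combined with the smallness \eqref{eq:est of nablaHX - Id in Linfty} that makes $\nabla_H X_{\tvice}$ uniformly invertible on $[0,T_1]$, provides.
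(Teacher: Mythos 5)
Your proposal follows the same skeleton as the paper's proof: the bounds on $\nablaH X_{\tvice}$ and on $\nablaH(X^{(1)}-X^{(2)})$ are taken from the computations preceding the lemma, and for part~(b) your resolvent identity is exactly the one used in the paper (up to sign), with the $\rW^{1,p}(0,T;\rW^{1,q}(\Omega))$-bound obtained by distributing the time derivative over the three factors, which is equivalent to the paper's direct multiplication in the Banach algebras $\rW^{1,p}(0,T;\rW^{1,q}(\Omega))$ and $\rL^\infty(0,T;\rW^{1,q}(\Omega))$. The genuine difference is in part~(a): the paper does not use a Neumann series or the identity $\dt[\nablaH Y_{\tvice}]=-\nablaH Y_{\tvice}(\dt\nablaH X_{\tvice})\nablaH Y_{\tvice}$, but writes $\nablaH Y_{\tvice}=(\det\nablaH X_{\tvice})^{-1}(\Cof\nablaH X_{\tvice})^\top$, bounds $\det\nablaH X_{\tvice}$ and $\Cof\nablaH X_{\tvice}$ in both algebras (they are low-degree polynomials in the entries of $\nablaH X_{\tvice}$, so the algebra property applies directly), and uses the lower bound $\det\nablaH X_{\tvice}\ge c>0$ guaranteed by \eqref{eq:est of nablaHX - Id in Linfty}; this yields the $\rW^{1,p}$-in-time and $\rL^\infty$-in-time estimates simultaneously without differentiating the inverse. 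Both routes are legitimate; the cofactor route is particularly convenient in two dimensions.

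One step of yours needs tightening. You claim the Neumann series for $[\nablaH X_{\tvice}]^{-1}$ converges in $\rL^\infty(0,T;\rW^{1,q}(\Omega))$ ``by the Banach algebra property, applied termwise,'' invoking the smallness \eqref{eq:est of nablaHX - Id in Linfty}. That smallness is only an $\rL^\infty(\Omega)$-bound; termwise application of the algebra inequality controls $\|(\Id_2-\nablaH X_{\tvice})^k\|_{\rW^{1,q}(\Omega)}$ only by $\bigl(C_{\mathrm{alg}}\,\|\Id_2-\nablaH X_{\tvice}\|_{\rW^{1,q}(\Omega)}\bigr)^k$, and the $\rW^{1,q}$-norm is merely bounded by $CT^{\nicefrac{1}{p'}}(R_0+C_{T_0}^*)$ via \eqref{eq:est of nablaH X - Id sea ice para-hyper}, which need not lie below $\nicefrac{1}{C_{\mathrm{alg}}}$ for the $T_1$ fixed in the statement, since $T_0'$ is chosen from the $\rL^\infty$-bound alone. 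Two standard repairs: either shrink the time threshold so that the $\rW^{1,q}$-norm of $\Id_2-\nablaH X_{\tvice}$ is itself small (harmless later, but it changes the definition of $T_0'$), or use the refined power estimate $\|A^k\|_{\rW^{1,q}(\Omega)}\le Ck\,\|A\|_{\rL^\infty(\Omega)}^{k-1}\|A\|_{\rW^{1,q}(\Omega)}$, obtained from the Leibniz rule together with $\rW^{1,q}(\Omega)\hookrightarrow\rL^\infty(\Omega)$, which makes the series summable using only the $\rL^\infty$-smallness. Alternatively, avoid the series altogether via the cofactor representation, as the paper does. With this repair, the rest of your argument (including the $\dt[A^{-1}]$ identity and the difference estimates in~(b)) goes through.
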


\begin{proof}
The estimates of $\nablaH X_{\tvice}$ follow from the above considerations.
Thanks to \eqref{eq:cond p and q sea ice para-hyper}, the spaces $\rW^{1,p}(0,T;\rW^{1,q}(\Omega))$ and $\rL^\infty(0,T;\rW^{1,q}(\Omega))$ have the Banach algebra structure, so the norm of products can be estimated by the product of the norms.
From the estimates of $\nablaH X_{\tvice}$, it then follows that
\begin{equation*}
    \begin{aligned}
        \| \det \nablaH X_{\tvice} \|_{\rW^{1,p}(0,T;\rW^{1,q}(\Omega))} + \| \det \nablaH X_{\tvice} \|_{\rL^\infty(0,T;\rW^{1,q}(\Omega))} 
        &\le C \tand\\
        \| \Cof \nablaH X_{\tvice} \|_{\rW^{1,p}(0,T;\rW^{1,q}(\Omega))} + \| \Cof \nablaH X_{\tvice} \|_{\rL^\infty(0,T;\rW^{1,q}(\Omega))} 
        &\le C.
    \end{aligned}
\end{equation*}
With regard to \eqref{eq:est of nablaHX - Id in Linfty}, we have $\det \nablaH X_{\tvice} \ge C > 0$ on $\TOmega$ if $T \le T_0'$, so the representation
\begin{equation*}
    \nablaH Y_{\tvice} = [\nablaH X_{\tvice}]^{-1} = \frac{1}{\det \nablaH X_{\tvice}} (\Cof \nablaH X_{\tvice})^\top
\end{equation*}
leads to the desired estimates of $\nablaH Y_{\tvice}$, showing~(a).
Concerning~(b), we also remark that the estimates of $\nablaH (X^{(1)} - X^{(2)})$ follow from the arguments given before this lemma, while for the estimates of~$\nablaH (Y^{(1)} - Y^{(2)})$, we can proceed in a similar way as for~(a).
In fact, we use the relation
\begin{equation*}
    \nablaH(Y^{(1)} - Y^{(2)}) = \nablaH Y^{(1)} - \nablaH Y^{(2)} = - \nablaH Y^{(1)} (\nablaH(X^{(1)} - X^{(2)})) \nablaH Y^{(2)}.
\end{equation*}
Finally, the estimates follow by a concatenation of~(a) and the estimates of $\nablaH(X^{(1)} - X^{(2)})$ in~(b).
\end{proof}

Before estimating the nonlinear terms, we collect some further useful embeddings and relations.
The assertion of~(a) follows by \cite[Theorem~III.4.10.2]{Ama:95}, where the $T$-independence results from the existence of an extension operator with $T$-independent norm thanks to the homogeneous initial values.
Aspect~(b) is a direct consequence of Sobolev embeddings and \eqref{eq:emd trace space sea ice para-hyper}, while~(c) is implied by H\"older's inequality. 

\begin{lem}\label{lem:aux results sea ice para-hyper}
Let $p, q \in (1,\infty)$ be such that \eqref{eq:cond p and q sea ice para-hyper} is valid, consider $T \in (0,T_1]$, where $T_1 > 0$ is as made precise in \autoref{lem:props of the trafo Euler Lagrange sea ice para-hyper}, and recall the trace space $\rX_\gamma = \rX_\gamma^v \times \rX_\gamma^h \times \rX_\gamma^a$ from \eqref{eq:trace space sea ice para-hyper}.
\begin{enumerate}[(a)]
    \item We have $\prescript{}{0}{\bE_1} \hookrightarrow \rBUC([0,T];\rX_\gamma)$, with $T$-independent embedding constant.
    \item It holds that $\rBUC([0,T];\rX_\gamma^v) \hookrightarrow \rL^\infty(0,T;\rL^{2q}(\Omega)^2) \hookrightarrow \rL^{2p}(0,T;\rL^{2q}(\Omega)^2)$.
    In particular, for $T$-independent embedding constant, we obtain $\prescript{}{0}{\bE_1^v} \hookrightarrow \rBUC([0,T];\rX_\gamma^v) \hookrightarrow \rL^{2p}(0,T;\rL^{2q}(\Omega)^2)$.
    \item For $f \in \rL^\infty(0,T)$, we get $\| f \|_{\rL^p(0,T)} \le T^{\nicefrac{1}{p}} \cdot \| f \|_{\rL^\infty(0,T)}$, while for $f \in \rW^{1,p}(0,T)$ with $f(0) = 0$, we get $\| f \|_{\rL^\infty(0,T)} \le T^{\nicefrac{1}{p'}} \cdot \| f \|_{\rW^{1,p}(0,T)}$, and $p' \in (1,\infty)$ denotes again the H\"older conjugate of $p$.
\end{enumerate}
\end{lem}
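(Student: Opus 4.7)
The three assertions are essentially independent, and the hints immediately preceding the statement already point to the main tools, so my plan is to address them in order. For part~(a), I would invoke Amann's embedding theorem \cite[Thm.~III.4.10.2]{Ama:95}, which yields $\rW^{1,p}(0,T;\rX_0) \cap \rL^p(0,T;\rX_1) \hookrightarrow \rBUC([0,T];\rX_\gamma)$ with $\rX_\gamma = (\rX_0,\rX_1)_{1-\nicefrac{1}{p},p}$. The only nontrivial aspect is the $T$-independence of the embedding constant. I would secure this by extending $\hu \in \prescript{}{0}{\bE_1}$ by zero to $(-\infty,0)$---this preserves $\rW^{1,p}$-regularity precisely because $\hu(0) = 0$, so no boundary jump appears---and then extending past $T$ via reflection (or to a fixed larger interval) with an extension operator whose norm does not depend on $T$. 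Applying Amann's theorem to the extended function and restricting back to $[0,T]$ then gives the bound with a constant depending only on $p$, $q$, and $\Omega$.

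For part~(b), the chain of inclusions reduces entirely to already-established embeddings. From \eqref{eq:emd trace space sea ice para-hyper} I would use $\rX_\gamma^v \hookrightarrow \rC^{1,\alpha}(\oOmega)^2 \hookrightarrow \rL^\infty(\Omega)^2 \hookrightarrow \rL^{2q}(\Omega)^2$, where the last step is just boundedness of $\Omega$. Taking $\rBUC([0,T];\cdot)$ on both sides gives the first claimed embedding, and H\"older's inequality in time produces $\rL^\infty(0,T;\rL^{2q}) \hookrightarrow \rL^{2p}(0,T;\rL^{2q})$. Concatenating with part~(a) applied to the velocity component yields the displayed chain $\prescript{}{0}{\bE_1^v} \hookrightarrow \rBUC([0,T];\rX_\gamma^v) \hookrightarrow \rL^{2p}(0,T;\rL^{2q}(\Omega)^2)$.

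For part~(c), the two inequalities are elementary. For $f \in \rL^\infty(0,T)$ I would estimate $\| f \|_{\rL^p(0,T)}^p = \int_0^T |f|^p \srd t \le T \| f \|_{\rL^\infty(0,T)}^p$ directly. For $f \in \rW^{1,p}(0,T)$ with $f(0) = 0$, the fundamental theorem of calculus gives $f(t) = \int_0^t f'(s) \srd s$, and H\"older yields $|f(t)| \le t^{\nicefrac{1}{p'}} \| f' \|_{\rL^p(0,t)} \le T^{\nicefrac{1}{p'}} \| f \|_{\rW^{1,p}(0,T)}$; taking the supremum over $t \in [0,T]$ closes the argument.

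No step poses a serious obstacle, since all three statements are standard. The only mildly delicate point is the $T$-uniformity in~(a), and the homogeneous-initial-value condition $\hu(0) = 0$ built into $\prescript{}{0}{\bE_1}$ is exactly what makes the zero-extension to the negative half-line legitimate without introducing $T$-dependent boundary contributions.
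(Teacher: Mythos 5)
Your proposal is correct and follows essentially the same route as the paper: part (a) via Amann's Theorem III.4.10.2 with the $T$-independence coming from an extension operator of $T$-independent norm made possible by the homogeneous initial value, part (b) from the Sobolev embedding chain in \eqref{eq:emd trace space sea ice para-hyper} together with H\"older in time, and part (c) by elementary H\"older/fundamental-theorem arguments. The paper states these facts without further detail, and your write-up simply fleshes out the same reasoning.
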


A combination of \autoref{lem:props of the trafo Euler Lagrange sea ice para-hyper} and \autoref{lem:aux results sea ice para-hyper}(c) leads to the following estimate of the difference of~$\nablaH Y$ from $\Id_2$ as well as the difference $\nablaH (Y^{(1)} - Y^{(2)})$.

\begin{lem}\label{lem:difference est of Id and nablaH Y}
Let $p,q \in (1,\infty)$ fulfill \eqref{eq:cond p and q sea ice para-hyper}, let $T \in (0,T_1]$ and $R \in (0,R_0]$, with $T_1 > 0$ from \autoref{lem:props of the trafo Euler Lagrange sea ice para-hyper} and $R_0 > 0$ as in \autoref{lem:final sol contained open set V sea ice para-hyper}.
Moreover, recall the reference solution $u_0^*$ from \autoref{prop:ref sol sea ice para-hyper}.
\begin{enumerate}[(a)]
    \item Let $\tu \coloneqq \hu + u_0^*$ for $\hu \in \prescript{}{0}{\bE_1}$ with $\| \hu \|_{\bE_1} \le R$.
    Then for $X_{\tvice}$ from \eqref{eq:diffeo X Euler Lagrange sea ice para-hyper}, $\nablaH Y_{\tvice} = [\nablaH X_{\vice}]^{-1}$ and the H\"older conjugate $p' \in (1,\infty)$ of $p$, we have $\| \Id_2 - \nablaH Y_{\tvice} \|_{\rL^\infty(0,T;\rW^{1,q}(\Omega))} \le C T^{\nicefrac{1}{p'}}$.
    \item For $i=1,2$, consider $\tu_i = \hu_i + u_0^*$, where $\hu_i \in \prescript{}{0}{\bE_1}$ and $\| \hu_i \|_{\bE_1} \le R$.
    Then for a constant $C > 0$ and the H\"older conjugate $p' \in (1,\infty)$ of $p$, the maps $Y^{(1)}$ and $Y^{(2)}$ satisfy the estimate $\| \nablaH (Y^{(1)} - Y^{(2)}) \|_{\rL^\infty(0,T;\rW^{1,q}(\Omega))} \le C T^{\nicefrac{1}{p'}} \cdot \| \hu_1 - \hu_2 \|_{\bE_1}$.
\end{enumerate}
\end{lem}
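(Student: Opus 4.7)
The plan is to exploit the fact that both $\Id_2 - \nablaH Y_{\tvice}$ and $\nablaH(Y^{(1)} - Y^{(2)})$ vanish at $t = 0$. This reduces each claim to inserting the $\rW^{1,p}$-in-time bounds of \autoref{lem:props of the trafo Euler Lagrange sea ice para-hyper} into the quantitative embedding $\rW^{1,p}(0,T) \hookrightarrow \rL^\infty(0,T)$ with homogeneous initial value provided by \autoref{lem:aux results sea ice para-hyper}(c), which carries the explicit factor $T^{\nicefrac{1}{p'}}$.

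For part~(a), I first note that $X_{\tvice}(0,\yH) = \yH$ by the initial condition in \eqref{eq:char Euler Lagrange}, so $Y_{\tvice}(0,\cdot) = \mathrm{Id}$, and hence $\nablaH Y_{\tvice}(0) = \Id_2$. Setting $f(t) \coloneqq \Id_2 - \nablaH Y_{\tvice}(t)$, I therefore have $f(0) = 0$, and \autoref{lem:props of the trafo Euler Lagrange sea ice para-hyper}(a) yields that $f$ belongs to $\rW^{1,p}(0,T;\rW^{1,q}(\Omega))$ with a norm bounded independently of $T$. Applying \autoref{lem:aux results sea ice para-hyper}(c) component-wise (equivalently, in its vector-valued form with values in $\rW^{1,q}(\Omega)$) gives
\[
\| f \|_{\rL^\infty(0,T;\rW^{1,q}(\Omega))} \le T^{\nicefrac{1}{p'}} \| f \|_{\rW^{1,p}(0,T;\rW^{1,q}(\Omega))} \le C T^{\nicefrac{1}{p'}},
\]
which is exactly the claim of~(a).

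For part~(b), the very same initial condition forces $Y^{(1)}(0,\cdot) = Y^{(2)}(0,\cdot) = \mathrm{Id}$, so $g(t) \coloneqq \nablaH(Y^{(1)}(t) - Y^{(2)}(t))$ again satisfies $g(0) = 0$. By \autoref{lem:props of the trafo Euler Lagrange sea ice para-hyper}(b),
\[
\| g \|_{\rW^{1,p}(0,T;\rW^{1,q}(\Omega))} \le C \cdot \| \hu_1 - \hu_2 \|_{\bE_1},
\]
and another application of the $\rW^{1,p}$-into-$\rL^\infty$ embedding from \autoref{lem:aux results sea ice para-hyper}(c) produces the claimed bound with the factor $\| \hu_1 - \hu_2 \|_{\bE_1}$ retained on the right. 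I expect no real obstacle here: the result is a direct consequence of the preceding two lemmas together with the vanishing of the differences at $t = 0$. The only minor point of care is that \autoref{lem:aux results sea ice para-hyper}(c) is stated for scalar functions of time, but its transfer to the Bochner setting with values in $\rW^{1,q}(\Omega)$ is standard via the trace theorem for vector-valued Sobolev functions.
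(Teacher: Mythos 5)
Your proposal is correct and follows essentially the same route as the paper, which obtains the lemma precisely by combining the uniform $\rW^{1,p}(0,T;\rW^{1,q}(\Omega))$ bounds of \autoref{lem:props of the trafo Euler Lagrange sea ice para-hyper} with the $T^{\nicefrac{1}{p'}}$-embedding of \autoref{lem:aux results sea ice para-hyper}(c), exploiting that $\Id_2 - \nablaH Y_{\tvice}$ and $\nablaH(Y^{(1)} - Y^{(2)})$ vanish at $t=0$ since $X(0,\cdot) = \mathrm{id}$. The vector-valued version of the embedding you invoke is indeed the standard fundamental-theorem-of-calculus/H\"older argument in the Bochner setting, so no gap remains.
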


Having the analytical tools at hand, we now start with the actual estimates of the nonlinear terms.
In the sequel, we fix $T_1 > 0$ from \autoref{lem:props of the trafo Euler Lagrange sea ice para-hyper} and $R_0 > 0$ from \autoref{lem:final sol contained open set V sea ice para-hyper}, and we consider $T \in (0,T_1]$ and~$R \in (0,R_0]$.
We begin with the quasilinear terms from the non-transformed equation.

\begin{lem}\label{lem:ests of auton terms sea ice para-hyper}
Let $p,q \in (1,\infty)$ satisfy \eqref{eq:cond p and q sea ice para-hyper}, and consider $\tu \coloneqq \hu + u_0^*$ for $\hu \in \prescript{}{0}{\bE_1}$ with $\| \hu \|_{\bE_1} \le R$ and~$u_0^*$ denoting the reference solution from \autoref{prop:ref sol sea ice para-hyper}.
Then there exists a constant $C > 0$ with
\begin{equation*}
    \biggl\| \bigl(\bAH(u_0^*) - \bAH(u_0)\bigr)\tvice - \bigl(B_1(u_0^*) - B_1(u_0)\bigr)\binom{\th}{\ta}\biggr\|_{\rL^p(0,T;\rL^q(\Omega))} \le C \cdot \| u_0^* - u_0 \|_{\rBUC([0,T];\rX_\gamma)} (R+C_T^*)
\end{equation*}
and
\begin{equation*}
    \biggl\| \bigl(\bAH(\tu) - \bAH(u_0^*)\bigr)(\tvice) - \bigl(B_1(\tu) - B_1(u_0^*)\bigr)\binom{\th}{\ta}\biggr\|_{\rL^p(0,T;\rL^q(\Omega))} \le C R (R + C_T^*).
\end{equation*}
\end{lem}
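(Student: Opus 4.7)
The plan is to exploit the smooth dependence of the coefficients of $\bAH$ and $B_1$ on the coefficient-argument $u$ whenever the latter stays in a bounded neighborhood of $u_0 \in V$. From \eqref{eq:coeffs principal part Hibler op}, \eqref{eq:lin Hibler op}, and \eqref{eq:ice strength}, together with the regularization $\trid(\eps) \ge \sqrt{\delta}$ and the lower bound $h \ge \kappa$ preserved on $[0,T]$ by \autoref{rem:ref sol sea ice para-hyper} and \autoref{lem:final sol contained open set V sea ice para-hyper}, the quantities $a_{ij}^{kl}(\eps(\vice),P(h,a))$, $1/(2\rice h \trid(\eps(\vice)))$, and $\del_h P(h,a)/(2\rice h)$, $\del_a P(h,a)/(2\rice h)$ are smooth in their arguments on a neighborhood of $u_0$. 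Combined with the embedding $\rX_\gamma \hookrightarrow \rC^{1,\alpha}(\oOmega)^2 \times \rC^{0,\alpha}(\oOmega) \times \rC^{0,\alpha}(\oOmega)$ from \eqref{eq:emd trace space sea ice para-hyper}, this yields uniform Lipschitz estimates of these coefficient maps in $\rL^\infty(\Omega)$ with respect to the $\rX_\gamma$-distance of $u, u' \in \obB_{\rX_\gamma}(u_0,r_0) \subset V$, together with an analogous $\rL^q(\Omega)$-estimate for the $\nablaH P$-factor appearing in the lower-order part of $\bAH$, which depends linearly on $\nablaH h$ and $\nablaH a$.

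Next I would split $(\bAH(u_0^*) - \bAH(u_0))\tvice$ into its principal piece (a difference of $a_{ij}^{kl}$ coefficients against second derivatives of $\tvice$) and its lower-order piece (the $\nablaH P$-difference against $\bS\eps(\tvice)$), and likewise write $(B_1(u_0^*) - B_1(u_0))\binom{\th}{\ta}$ as a difference of $\rL^\infty$-coefficients against $\nablaH(\th,\ta)$. For the principal piece, bound pointwise by $\|u_0^*(t) - u_0\|_{\rX_\gamma} |\nabla^2 \tvice(t,\cdot)|$, take $\rL^q_x$ and then $\rL^p_t$ norms, and pull out $\sup_{[0,T]}\|u_0^* - u_0\|_{\rX_\gamma}$. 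For the $\nablaH P$-piece the Sobolev embedding $\rW^{1,q}(\Omega) \hookrightarrow \rL^\infty(\Omega)$, valid since $q > 2$ by \eqref{eq:cond p and q sea ice para-hyper}, places $\bS\eps(\tvice) \in \rL^p(0,T;\rL^\infty(\Omega))$ while the $\nablaH P$-difference lies in $\rL^\infty(0,T;\rL^q(\Omega))$ with norm controlled by $\|u_0^* - u_0\|_{\rBUC([0,T];\rX_\gamma)}$. The $B_1$-term is handled identically. Summing the pieces and invoking $\|\tu\|_{\bE_1} \le R + C_T^*$ from \eqref{eq:bound of the norm tu sea ice para-hyper} closes the first inequality.

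The second inequality is proved by exactly the same decomposition, now with the pair $(\tu,u_0^*)$ in place of $(u_0^*,u_0)$. Both $\tu$ and $u_0^*$ lie in $V$ on $[0,T]$ by \autoref{lem:final sol contained open set V sea ice para-hyper}, so the same Lipschitz bounds on the coefficient maps apply, the coefficient difference being controlled by $\|\tu - u_0^*\|_{\rBUC([0,T];\rX_\gamma)} = \|\hu\|_{\rBUC([0,T];\rX_\gamma)}$. The $T$-independent embedding $\prescript{}{0}{\bE_1} \hookrightarrow \rBUC([0,T];\rX_\gamma)$ from \autoref{lem:aux results sea ice para-hyper}(a) bounds this by $C\|\hu\|_{\bE_1} \le CR$, and combining with $\|\tu\|_{\bE_1} \le R + C_T^*$ yields $CR(R + C_T^*)$. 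I expect the only subtle point to be verifying the uniform smoothness of the coefficient maps on $\obB_{\rX_\gamma}(u_0,r_0)$, which rests decisively on $\delta > 0$ and on the pointwise constraint $h \ge \kappa$ built into $V$; without these, $1/\trid$ or $1/h$ would fail to be Lipschitz and the whole argument would collapse.
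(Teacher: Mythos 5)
Your proposal is correct and takes essentially the same route as the paper: what you prove by hand—the pointwise-in-time Lipschitz estimate of the coefficient differences (principal part in $\rL^\infty$, the $\nablaH P$-part in $\rL^q$, all uniform on a ball in $V$ thanks to $\delta>0$ and $h\ge\kappa$)—is exactly the estimate \eqref{eq:est of differences in the auton setting}, which the paper imports from \cite[Section~6]{BDHH:22} with the remark that the reduced regularity in the $h$- and $a$-components is harmless. The remaining step is identical in both arguments: apply that estimate pointwise in time, use H\"older to pull out $\| u_0^* - u_0 \|_{\rBUC([0,T];\rX_\gamma)}$ (resp.\ $\| \hu \|_{\rBUC([0,T];\rX_\gamma)} \le C R$ via the $T$-independent embedding), and bound $\| \tu \|_{\rL^p(0,T;\rX_1)} \le R + C_T^*$.
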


\begin{proof}
Let $u_1, u_2 \in V$ and $w = (\vice,h,a) \in \rX_1$.
Similarly as in \cite[Section~6]{BDHH:22}, additionally observing that the reduced regularity in the $h$- and $a$-component does not affect the arguments thanks to the smooth dependence of the coefficients on $\eps$, $h$ and $a$, we deduce the existence of a constant $C_A > 0$ such that
\begin{equation}\label{eq:est of differences in the auton setting}
    \biggl\| \bigl(\bAH(u_1) - \bAH(u_2)\bigr)\vice - \bigl(B_1(u_1) - B_1(u_2)\bigr)\binom{h}{a} \biggr\|_{\rL^q(\Omega)} \le C_A \cdot \| u_1 - u_2 \|_{\rX_\gamma} \cdot \| w \|_{\rX_1}.
\end{equation}
Recalling from \autoref{rem:ref sol sea ice para-hyper}(c) that $u_0^*(t) \in V$ for all $t \in [0,T]$ and $\tu(t) = \hu(t) + u_0^*(t) \in V$ for all $t \in [0,T]$, we can use \eqref{eq:est of differences in the auton setting} in the case that $u_1 = u_0^*$, $u_2 = u_0$ and $w = u_0^*$ to derive
\begin{equation*}
    \begin{aligned}
        &\quad \biggl\| \bigl(\bAH(u_0^*) - \bAH(u_0)\bigr)\vii^* - \bigl(B_1(u_0^*) - B_1(u_0)\bigr)\binom{h_0^*}{a_0^*}\biggr\|_{\rL^p(0,T;\rL^q(\Omega))}\\
        &= \biggl(\int_0^T \biggl\| \bigl(\bAH(u_0^*(t)) - \bAH(u_0)\bigr)\vii^*(t) - \bigl(B_1(u_0^*(t)) - B_1(u_0)\bigr)\binom{h_0^*(t)}{a_0^*(t)}\biggr\|_{\rL^q(\Omega)}^p \srd t\biggr)^{\nicefrac{1}{p}}\\
        &\le C_A \biggl(\int_0^T \| u_0^*(t) - u_0 \|_{\rX_\gamma}^p \cdot \| u_0^*(t) \|_{\rX_1}^p \srd t\biggr)^{\nicefrac{1}{p}}\\
        &\le C \cdot \| u_0^* - u_0 \|_{\rBUC([0,T];\rX_\gamma)} \cdot \| u_0^* \|_{\bE_1}
        \le C \cdot \| u_0^* - u_0 \|_{\rBUC([0,T];\rX_\gamma)} C_T^*.
    \end{aligned}
\end{equation*}
This proves the first part of the assertion.
The second part can be shown likewise.
\end{proof}

The following proposition establishes an estimate of the term $F_1$.

\begin{prop}\label{prop:self map est F_1 sea ice para-hyper}
Let $p,q \in (1,\infty)$ satisfy \eqref{eq:cond p and q sea ice para-hyper} and $\tu \coloneqq \hu + u_0^*$, with $\hu \in \prescript{}{0}{\bE_1}$, $\| \hu \|_{\bE_1} \le R$ and~$u_0^*$ representing the reference solution from \autoref{prop:ref sol sea ice para-hyper}.
Moreover, suppose that $\Vatm$, $\Vocn$, $\nablaH H$ and~$\fgr$ fulfill \autoref{ass:external forcing terms}.
Recall the $T$-independent maximal regularity constant $\Cmr > 0$ from \autoref{cor:max reg sea ice para-hyper}.
Then there is $C_{F_1}(R,T) > 0$, with $C_{F_1}(R,T) < \nicefrac{R}{6 \Cmr}$ for $R > 0$ and $T > 0$ sufficiently small, such that~$\| F_1(\hu) \|_{\rL^p(0,T;\rL^q(\Omega))} \le C_{F_1}(R,T)$.
\end{prop}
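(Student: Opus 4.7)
The approach is to split $F_1(\hu)$ into five pieces and estimate each in $\rL^p(0,T;\rL^q(\Omega))$ separately:
(i) the Lagrangian transformation error $\bigl(\tbAH(\tu)-\bAH(\tu)\bigr)\tvice-\bigl(\tB(\tu)-B_1(\tu)\bigr)\binom{\th}{\ta}$;
(ii) the linearization error $\bigl(\bAH(\tu)-\bAH(u_0)\bigr)\tvice-\bigl(B_1(\tu)-B_1(u_0)\bigr)\binom{\th}{\ta}$;
(iii) the zeroth-order drift $\omega\tvice-\ccor\tvice^\perp$;
(iv) the surface height forcing $-g\nablaH H$;
(v) the atmospheric-oceanic stress $\tfrac{1}{\rice\th}\bigl(\tatm+\tocn(\tvice)\bigr)$.
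Thanks to \autoref{lem:final sol contained open set V sea ice para-hyper} we have $\tu(t)\in V$ on $[0,T]$, so $\th\ge\kappa$ and $\ta\in(0,1)$, and by \eqref{eq:bound of the norm tu sea ice para-hyper} the $\bE_1$-norm of $\tu$ is controlled by $R+C_T^*$.

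For (i), comparing \eqref{eq:transformed Hibler op sea ice para-hyper} with \eqref{eq:lin Hibler op} one sees that the difference collects the factors $\del_kY_m-\delta_{km}$ (coming both from the $(\del_kY_m)$ in front of each term and from $\teps-\eps$) multiplied by second derivatives of $\tvice$ or first derivatives of $\th,\ta$. By \autoref{lem:difference est of Id and nablaH Y}(a) we have $\|\Id_2-\nablaH Y_{\tvice}\|_{\rL^\infty(0,T;\rW^{1,q})}\le CT^{\nicefrac{1}{p'}}$, and the Banach algebra structure of $\rW^{1,q}(\Omega)$ (since $q>2$) together with $\th\ge\kappa$ and the smooth $(h,a)$-dependence of all coefficients gives
\begin{equation*}
\|(\mathrm{i})\|_{\rL^p(0,T;\rL^q)}\le CT^{\nicefrac{1}{p'}}(R+C_T^*).
\end{equation*}
For (ii), write $\bAH(\tu)-\bAH(u_0)=(\bAH(\tu)-\bAH(u_0^*))+(\bAH(u_0^*)-\bAH(u_0))$ and apply \autoref{lem:ests of auton terms sea ice para-hyper} to each piece; similarly for the $B_1$-difference. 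This yields a bound $CR(R+C_T^*)+C\|u_0^*-u_0\|_{\rBUC([0,T];\rX_\gamma)}\,C_T^*$, and both summands vanish with $R\to 0$ and $T\to 0$ by \autoref{rem:ref sol sea ice para-hyper}(b),(c).

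For (iii) and (iv), \autoref{lem:aux results sea ice para-hyper}(a),(c) and \autoref{ass:external forcing terms}(b) give $\|\omega\tvice-\ccor\tvice^\perp\|_{\rL^p(0,T;\rL^q)}\le CT^{\nicefrac{1}{p}}(R+C_T^*)$ and $\|g\nablaH H\|_{\rL^p(0,T;\rL^q)}\le CgT^{\nicefrac{1}{p}}$. For (v), observe that $|\tatm|\le C|\Vatm|^2$, so \autoref{ass:external forcing terms}(a) yields a $T^{\nicefrac{1}{p}}$-bound via $\kappa^{-1}$. Expanding $\tocn(\tvice)$ produces three summands: the $|\Vocn|^2$-term is handled as $\tatm$; the cross term $\Vocn\cdot\tvice$ is controlled by H\"older with $\Vocn\in\rL^\infty(0,T;\rL^{2q})$ and $\tvice\in\rL^{2p}(0,T;\rL^{2q})$ (with $T$-independent embedding constant by \autoref{lem:aux results sea ice para-hyper}(b)); and $|\tvice|^2$ lies in $\rL^p(0,T;\rL^q)$ with norm $\le C(R+C_T^*)^2$ via the same embedding.

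Summing the five contributions gives a bound of the form
\begin{equation*}
C_{F_1}(R,T)\le C\bigl(T^{\nicefrac{1}{p'}}+T^{\nicefrac{1}{p}}+R+\|u_0^*-u_0\|_{\rBUC([0,T];\rX_\gamma)}\bigr)(R+C_T^*+1).
\end{equation*}
First choosing $R$ small and then $T$ small (so that $C_T^*\to 0$ and $\|u_0^*-u_0\|_{\rBUC}\to 0$) renders $C_{F_1}(R,T)<\nicefrac{R}{6\Cmr}$. The main obstacle is item (i): one must carefully unwind the transformed coefficients in \eqref{eq:transformed Hibler op sea ice para-hyper}--\eqref{eq:coeffs transformed Hibler sea ice para-hyper} so that each summand of the difference carries a factor $\nablaH Y-\Id$ (or an analogous higher-order $Y$-derivative), thereby allowing the $T^{\nicefrac{1}{p'}}$-smallness from \autoref{lem:difference est of Id and nablaH Y}(a) to absorb the $\rL^p(0,T;\rL^q)$-norm of the second-order derivatives of $\tvice$ via the Banach algebra structure; the anisotropic ground space is crucial here, as it keeps $(\th,\ta)$ in $\rW^{1,q}$ throughout.
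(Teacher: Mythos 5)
Your overall route coincides with the paper's: the same splitting of $F_1$ into the lower-order terms, the forcing terms, and the Hibler/pressure difference, with the latter further decomposed into the transformation error at $\tu$ plus the two ``autonomous'' differences $\bAH(\tu)-\bAH(u_0^*)$ and $\bAH(u_0^*)-\bAH(u_0)$ handled by \autoref{lem:ests of auton terms sea ice para-hyper}, and smallness produced by \autoref{lem:difference est of Id and nablaH Y}, \autoref{lem:aux results sea ice para-hyper} and $C_T^*\to 0$. However, your concluding step, as written, does not close: expanding the merged bound $C\bigl(T^{\nicefrac{1}{p'}}+T^{\nicefrac{1}{p}}+R+\|u_0^*-u_0\|_{\rBUC([0,T];\rX_\gamma)}\bigr)\bigl(R+C_T^*+1\bigr)$ produces the cross term $C\,R\cdot 1$, which scales exactly like the target $\nicefrac{R}{6\Cmr}$ and cannot be beaten by shrinking $R$ (the constant $C$ is fixed) nor by shrinking $T$. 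The conclusion does follow, but only from your itemized estimates, where the sole contribution without a positive $T$-power or a factor vanishing as $T\to 0$ is $C R(R+C_T^*)$ from piece (ii); for that term one first fixes $R$ small enough that $C(R+C_T^*)$ is below the required threshold (after also taking $T$ small so that $C_T^*$ is small), and only then absorbs the remaining terms by the $T$-powers. Keep the itemized form in the final step, as the paper does.

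A second, more technical point concerns piece (i): it is not true that every summand of $\bigl(\tbAH(\tu)-\bAH(\tu)\bigr)\tvice$ carries a factor of the type $\nablaH Y-\Id_2$. Writing out $\del_m\teps_{jl}-\del_k\eps_{jl}$ yields terms such as $(\del_k\del_jY_n)\,\del_n\tv_{\ice,l}$, which contain a second derivative of $Y$ and only a first derivative of $\tvice$; these receive no smallness from \autoref{lem:difference est of Id and nablaH Y}(a). In the paper their $T$-power comes from a different mechanism, namely H\"older in time ($\rL^\infty\hookrightarrow\rL^p$ on $(0,T)$ with factor $T^{\nicefrac{1}{p}}$), using $\nablaH Y\in\rL^\infty(0,T;\rW^{1,q}(\Omega))$ from \autoref{lem:props of the trafo Euler Lagrange sea ice para-hyper} together with $\nablaH\tvice\in\rL^\infty(0,T;\rL^\infty(\Omega))$ via the trace-space embedding and \eqref{eq:est BUC Xgamma by init data and max reg space sea ice para-hyper}. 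Moreover, the coefficient difference $a_{ij}^{kl}(\teps,P)-a_{ij}^{kl}(\eps,P)$ is not a matter of the smooth $(h,a)$-dependence you invoke; it requires a mean value argument in the $\eps$-variable, which is legitimate precisely because of the regularization $\delta>0$, combined with the pointwise estimate of $\teps-\eps$ in terms of $(\nablaH Y-\Id_2)\nablaH\tvice$. Supplementing your sketch of (i) with these two mechanisms reproduces the paper's $\rI$--$\rIV$ decomposition and completes the argument.
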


\begin{proof}
Before starting with the estimates, let us first comment on some commonly used embeddings and notation.
We heavily rely on $\rW^{1,p}(0,T) \hookrightarrow \rL^\infty(0,T)$.
In addition, we frequently use $\| \hu \|_{\bE_1} \le R$ and often invoke the abbreviation $C_T^*$ of the norm of the reference solution.
By $C_0 \coloneqq \| u_0 \|_{\rX_\gamma}$, we denote the norm of the initial values in the sequel.
Instead of $Y_{\tvice}$, we write $Y$ throughout this proof, since we only consider $\tu = \hu + u_0^*$.
First, \autoref{lem:aux results sea ice para-hyper}(c) implies
\begin{equation*}
    \left\| \omega \tvice - \ccor \tvice^\perp \right\|_{\rL^p(0,T;\rL^q(\Omega))} \le C (T^{\nicefrac{1}{p}} \cdot \|  \hvice \|_{\rL^\infty(0,T;\rL^q(\Omega))} + \| \vii^* \|_{\rL^p(0,T;\rL^q(\Omega))}) \le C (T^{\nicefrac{1}{p}} R + C_T^*).
\end{equation*}
By \autoref{ass:external forcing terms}(b), we further get $\| -g \nablaH H \|_{\rL^p(0,T;\rL^q(\Omega))} \le T^{\nicefrac{1}{p}} \cdot \| -g \nablaH H \|_{\rL^\infty(0,T;\rL^q(\Omega))} \le C T^{\nicefrac{1}{p}}$.

For the subsequent considerations, it is useful to estimate the factor $\nicefrac{1}{\rice(\hh + h_0^*)}$ in $\rL^\infty(0,T;\rL^\infty(\Omega))$.
By virtue of \autoref{lem:final sol contained open set V sea ice para-hyper}, we especially obtain $\th(t) = \hh(t) + h_0^*(t) > \kappa$ for all $t \in [0,T]$, yielding
\begin{equation}\label{eq:est of frac with h sea ice para-hyper}
    \biggl\| \frac{1}{\rice \th} \biggr\|_{\rL^\infty(0,T;\rL^\infty(\Omega))} \le C.
\end{equation}
From \eqref{eq:est of frac with h sea ice para-hyper}, \autoref{lem:aux results sea ice para-hyper}(c) and \autoref{ass:external forcing terms}(a), we deduce that
\begin{equation*}
    \biggl\| \frac{1}{\rice \th} \tatm \biggr\|_{\rL^p(0,T;\rL^q(\Omega))} \le C T^{\nicefrac{1}{p}} \cdot \| \Vatm \|_{\rL^\infty(0,T;\rL^{2q}(\Omega))}^2 \le C T^{\nicefrac{1}{p}}.
\end{equation*}
Next, let us recall the well known estimate
\begin{equation}\label{eq:est BUC Xgamma by init data and max reg space sea ice para-hyper}
    \| u_0^* \|_{\rBUC([0,T];\rX_\gamma)} \le C(\| u_0 \|_{\rX_\gamma} + \| u_0^* \|_{\bE_1}) \le C(C_0 + C_T^*),
\end{equation}
where the constant $C > 0$ appearing in the estimate is independent of $T > 0$.
It then follows from \eqref{eq:est of frac with h sea ice para-hyper}, \autoref{lem:aux results sea ice para-hyper}(c), \autoref{lem:aux results sea ice para-hyper}(b), \eqref{eq:est BUC Xgamma by init data and max reg space sea ice para-hyper} and \autoref{ass:external forcing terms}(a) that
\begin{equation*}
    \begin{aligned}
        \biggl\| \frac{1}{\rice \th} \tocn(\tvice) \biggr\|_{\rL^p(0,T;\rL^q(\Omega))}
        &\le C T^{\nicefrac{1}{p}}\bigl(\| \Vocn \|_{\rL^\infty(0,T;\rL^{2q}(\Omega))}^2 + \| \hvice \|_{\rL^\infty(0,T;\rL^{2q}(\Omega))}^2 + \| \vii^* \|_{\rL^\infty(0,T;\rL^{2q}(\Omega))}^2\bigr)\\
        &\le C T^{\nicefrac{1}{p}}\left(1 + R^2 + C_0^2 + (C_T^*)^2\right).
    \end{aligned}
\end{equation*}

The term to be still estimated is $\bigl(\tbAH(\tu) - \bAH(u_0)\bigr)\tvice - \bigl(\tB(\tu) - B_1(u_0)\bigr)\binom{\th}{\ta}$.
This comes down to
\begin{equation}\label{eq:remaining term to est sea ice para-hyper}
    \bigl(\tbAH(\hu + u_0^*) - \bAH(\hu + u_0^*)\bigr)(\hvice + \vii^*) - \bigl(\tB(\hu + u_0^*) - B_1(\hu + u_0^*)\bigr)\binom{\hh + h_0^*}{\ha + a_0^*} \tand
\end{equation}
\begin{equation*}
    \bigl(\bAH(\hu + u_0^*) - \bAH(u_0^*)\bigr)(\hvice + \vii^*) - \bigl(B_1(\hu + u_0^*) - B_1(u_0^*)\bigr)\binom{\hh + h_0^*}{\ha + a_0^*} \taswellas
\end{equation*}
\begin{equation*}
    \bigl(\bAH(u_0^*) - \bAH(u_0)\bigr)(\hvice + \vii^*) - \bigl(B_1(u_0^*) - B_1(u_0)\bigr)\binom{\hh + h_0^*}{\ha + a_0^*}.
\end{equation*}
By \autoref{lem:ests of auton terms sea ice para-hyper}, the latter two terms admit estimates by $C R (R + C_T^*)$ and $C \| u_0^* - u_0 \|_{\rBUC([0,T];\rX_\gamma} (R + C_T^*)$, respectively, and we observe that $\| u_0^* - u_0 \|_{\rBUC([0,T];\rX_\gamma)}$ tends to zero as $T \to 0$ by \autoref{rem:ref sol sea ice para-hyper}(c).
It thus remains to estimate \eqref{eq:remaining term to est sea ice para-hyper} in the last part of the proof.

We start with $(\tbAH(\hu + u_0^*) - \bAH(\hu + u_0^*))(\hvice + \vii^*)$.
Below, we concentrate on the principal part and remark that the remaining part can be handled in a similar way.
Recalling the transformed Hibler operator $\tbAH$ from \eqref{eq:transformed Hibler op sea ice para-hyper} and the original Hibler operator from \eqref{eq:Hibler op diff form}, we note that we need to estimate
\begin{equation}\label{eq:est of principal part sea ice para-hyper}
    \begin{aligned}
        &\sum_{j,k,l,m=1}^2 \bigl(a_{ij}^{klm}(\teps(\hvice + \vii^*),P(\hh + h_0^*,\ha + a_0^*)) \del_m \teps_{jl}(\hvice + \vii^*)\\
        &\quad - a_{ij}^{kl}(\eps(\hvice + \vii^*),P(\hh + h_0^*,\ha + a_0^*)) \del_k \eps_{jl}(\hvice + \vii^*)\bigr) = \rI + \rII
    \end{aligned}
\end{equation}
in the sequel.
To ease notation, we write $\eps$ and $\teps$ as well as $P$, since the latter quantities always depend on $\hvice + \vii^*$ and $\hh + h_0^*$ as well as $\ha + a_0^*$, respectively.
The terms $\rI$ and $\rII$ in \eqref{eq:est of principal part sea ice para-hyper} are defined by
\begin{equation*}
    \rI \coloneqq \sum_{j,k,l,m=1}^2 \bigl(a_{ij}^{klm}(\teps,P) \del_m \teps_{jl} - a_{ij}^{kl}(\eps,P) \del_k \teps_{jl}\bigr) \tand \rII \coloneqq \sum_{j,k,l=1}^2\bigl(a_{ij}^{kl}(\eps,P) \del_k \teps_{jl} - a_{ij}^{kl}(\eps,P) \del_k \eps_{jl}\bigr).
\end{equation*}
With regard to $\rII$, it is necessary to estimate the difference of $\teps$ and $\eps$.
In fact, we compute the difference~$\del_k \teps_{jl} - \del_k \eps_{jl}$ to be given by 
\begin{equation*}
    \frac{1}{2}\sum_{n=1}^2\bigl((\del_j Y_n - \delta_{jn})\del_k \del_n \tv_{\ice,l} + (\del_l Y_n - \delta_{ln}) \del_k \del_n \tv_{\ice,j} + (\del_k \del_j Y_n) \del_n \tv_{\ice,j} + (\del_k \del_l Y_n) \del_n \tv_{\ice,j}\bigr).
\end{equation*}
From \autoref{lem:difference est of Id and nablaH Y}, we derive that
\begin{equation*}
    \begin{aligned}
        \| (\del_j Y_n - \delta_{jn})\del_k \del_n \tv_{\ice,l} \|_{\rL^p(0,T;\rL^q(\Omega))} 
        &\le C \cdot \| \Id_2 - \nablaH Y \|_{\rL^\infty(0,T;\rW^{1,q}(\Omega))} \bigl(\| \hu \|_{\bE_1} + \| u_0^* \|_{\bE_1}\bigr)\\
        &\le C T^{\nicefrac{1}{p'}} (R + C_T^*).
    \end{aligned}
\end{equation*}
The term $(\del_l Y_n - \delta_{ln}) \del_k \del_n \tv_{\ice,j}$ can be treated in the same way.
Employing the embedding of the trace space $\rX_\gamma$ from \eqref{eq:emd trace space sea ice para-hyper}, \autoref{lem:aux results sea ice para-hyper}(c), \autoref{lem:props of the trafo Euler Lagrange sea ice para-hyper}, the trace space embedding from \autoref{lem:aux results sea ice para-hyper}(a) with $T$-independent embedding constant and the estimate \eqref{eq:est BUC Xgamma by init data and max reg space sea ice para-hyper}, we argue that
\begin{equation*}
    \begin{aligned}
        \| (\del_k \del_j Y_n) \del_n \tv_{\ice,j} \|_{\rL^p(0,T;\rL^q(\Omega))}
        &\le \| (\del_k \del_j Y_n) \|_{\rL^p(0,T;\rL^q(\Omega))} \cdot \| \del_n \tv_{\ice,j} \|_{\rL^\infty(0,T;\rL^\infty(\Omega))}\\
        &\le C \cdot \| \nablaH Y \|_{\rL^p(0,T;\rW^{1,q}(\Omega))} \bigl(\| \hvice \|_{\rL^\infty(0,T;\rC^1(\oOmega))} + \| \vii^* \|_{\rL^\infty(0,T;\rC^1(\oOmega))}\bigr)\\
        &\le C T^{\nicefrac{1}{p}} \cdot \| \nablaH Y \|_{\rL^\infty(0,T;\rW^{1,q}(\Omega))}\bigl(\| \hu \|_{\rBUC(0,T;\rX_\gamma)} + \| u_0^* \|_{\rBUC(0,T;\rX_\gamma)}\bigr)\\
        &\le C T^{\nicefrac{1}{p}} (R + C_0 + C_T^*).
    \end{aligned}
\end{equation*}
As $(\del_k \del_l Y_n) \del_n \tv_{\ice,j}$ can be handled likewise, we have an estimate of the difference $\del_k \teps_{jl} - \del_k \eps_{jl}$.
The shape of the coefficients as introduced in \eqref{eq:coeffs principal part Hibler op}, $\tu(t) \in V$ for all $t \in [0,T]$ as stated in \autoref{lem:final sol contained open set V sea ice para-hyper} and the estimate of $\tu$ from \eqref{eq:bound of the norm tu sea ice para-hyper} reveal the boundedness, i.\ e., $\| a_{ij}^{kl}(\eps,P) \|_{\rL^\infty(0,T;\rL^\infty(\Omega))} \le C$.
A concatenation of the previous estimates yields the existence of $\beta > 0$ such that
\begin{equation*}
    \| \rII \|_{\rL^p(0,T;\rL^q(\Omega))} \le C T^\beta (R + C_0 + C_T^*).
\end{equation*}

Invoking $a_{ij}^{klm}$ from \eqref{eq:coeffs transformed Hibler sea ice para-hyper}, we further decompose $\rI$ into $\rIII$ and $\rIV$ given by
\begin{equation*}
    \rIII \coloneqq \sum_{j,k,l,m=1}^2 \bigl(a_{ij}^{klm}(\teps,P) - a_{ij}^{klm}(\eps,P)\bigr) \del_m \teps_{jl} \tand \rIV \coloneqq \sum_{j,k,l,m=1}^2 a_{ij}^{kl}(\del_k Y_m - \delta_{km}) \del_m \teps_{jl}.
\end{equation*}
With regard to $\rIV$, we conclude from \autoref{lem:difference est of Id and nablaH Y} that
\begin{equation}\label{eq:est del_k Y_m - delta_km sea ice para-hyper}
    \| \del_k Y_m - \delta_{km} \|_{\rL^\infty(0,T;\rL^\infty(\Omega))} \le C \cdot \| \Id_2 - \nablaH Y \|_{\rL^\infty(0,T;\rW^{1,q}(\Omega))} \le C T^{\nicefrac{1}{p'}}.
\end{equation}
Concerning $\del_m \teps_{jl}$, we can proceed in a similar way as above to derive $ \| \del_m \teps_{jl} \|_{\rL^p(0,T;\rL^q(\Omega))} \le C$.
Together with the above boundedness of the coefficients, this leads to
\begin{equation}\label{eq:est of rIV sea ice para-hyper}
    \| \rIV \|_{\rL^p(0,T;\rL^q(\Omega))} \le C T^{\nicefrac{1}{p'}}.
\end{equation}

The last term in $\rIII$ can be estimated as described above.
In addition, $\del_j Y_k$ is bounded from above by a constant in $\rL^\infty(0,T;\rL^\infty(\Omega))$ in view of \autoref{lem:props of the trafo Euler Lagrange sea ice para-hyper}.
Consequently, the task boils down to an estimate of the difference $a_{ij}^{kl}(\teps,P) - a_{ij}^{kl}(\eps,P)$.
The difference only concerns $\eps$ and $\teps$, so the first factor can be bounded similarly as above.
Thanks to the smooth dependence on $\eps$ in view of the regularization by~$\delta > 0$, we derive from the mean value theorem that
\begin{equation*}
    \biggl\| \frac{1}{\trid(\teps)}\left(\bS_{ij}^{kl} - \frac{1}{\trid^2(\teps)}(\bS \teps)_{ik} (\bS \teps)_{jl}\right) - \frac{1}{\trid(\eps)}\left(\bS_{ij}^{kl} - \frac{1}{\trid^2(\eps)}(\bS \eps)_{ik} (\bS \eps)_{jl}\right) \biggr\|_{\infty,\infty} \le C \cdot \| \teps - \eps \|_{\infty,\infty},
\end{equation*}
where $\| \cdot \|_{\infty,\infty} = \| \cdot \|_{\rL^\infty(0,T;\rL^\infty(\Omega))}$.
Moreover, we compute
\begin{equation*}
    \teps_{ij} - \eps_{ij} = \frac{1}{2} \sum_{k=1}^2 \bigl(((\del_i Y_k) - \delta_{ik}) \del_k \tv_{\ice,j} + ((\del_j Y_k) - \delta_{jk}) \del_k \tv_{\ice,i}\bigr).
\end{equation*}
The differences $(\del_i Y_k) - \delta_{ik}$ and $(\del_j Y_k) - \delta_{jk}$ can be handled with \eqref{eq:est del_k Y_m - delta_km sea ice para-hyper} to obtain an estimate by a $T$-power.
Concerning the derivative of $\tvice$, we employ \eqref{eq:emd trace space sea ice para-hyper}, \autoref{lem:aux results sea ice para-hyper}(a) and \eqref{eq:est BUC Xgamma by init data and max reg space sea ice para-hyper} to get
\begin{equation*}
    \| \del_k \tvice \|_{\rL^\infty(0,T;\rL^\infty(\Omega))} \le C \bigl(\| \hu \|_{\rBUC([0,T];\rX_\gamma)} + \| u_0^* \|_{\rBUC([0,T];\rX_\gamma)}\bigr) \le C(R + C_0 + C_T^*).
\end{equation*}
This yields that 
\begin{equation}\label{eq:est of rIII sea ice para-hyper}
    \| \rIII \|_{\rL^p(0,T;\rL^q(\Omega))} \le C T^{\nicefrac{1}{p'}}(R + C_0 + C_T^*).
\end{equation}
Putting together \eqref{eq:est of rIV sea ice para-hyper} and \eqref{eq:est of rIII sea ice para-hyper}, we conclude that
\begin{equation*}
    \| \rI \|_{\rL^p(0,T;\rL^q(\Omega))} \le C T^{\nicefrac{1}{p'}}(1 + R + C_0 + C_T^*).
\end{equation*}

Concerning the corresponding off-diagonal part $(\tB(\hu + u_0^*) - B_1(\hu + u_0^*)) \binom{\hh + h_0^*}{\ha + a_0^*}$, we calculate the $h$-part
\begin{equation*}
    \frac{\del_h P(\hh + h_0^*,\ha + a_0^*)}{2 \rice(\hh + h_0^*)} \sum_{j=1}^2 (\del_i Y_j - \delta_{ij}) \del_j (\hh + h_0^*).
\end{equation*}
Again, for the inverse of $2 \rice (\hh + h_0^*)$, we can use \eqref{eq:est of frac with h sea ice para-hyper}, while $\del_h P(\hh + h_0^*,\ha + a_0^*) = p^* \mre^{-c_\bullet(1-(\ha + a_0^*))}$.
Upon deducing from \autoref{lem:final sol contained open set V sea ice para-hyper} that $\ta(t) = \ha(t) + a_0^*(t) \in (0,1)$ for all $t \in [0,T]$, we find that
\begin{equation*}
    \| \del_h P(\hh + h_0^*,\ha + a_0^*) \|_{\rL^\infty(0,T;\rL^\infty(\Omega))} \le C.
\end{equation*}
In total, additionally exploiting the estimate of $\Id_2 - \nablaH Y$ from \autoref{lem:difference est of Id and nablaH Y} and \autoref{lem:aux results sea ice para-hyper}(c), we obtain
\begin{equation*}
    \begin{aligned}
        &\quad\biggl\| \frac{\del_h P(\hh + h_0^*,\ha + a_0^*)}{2 \rice(\hh + h_0^*)} \sum_{j=1}^2 (\del_i Y_j - \delta_{ij}) \del_j (\hh + h_0^*) \biggr\|_{\rL^p(0,T;\rL^q(\Omega))}\\
        &\le C \cdot \| \Id_2 - \nablaH Y \|_{\rL^\infty(0,T;\rL^\infty(\Omega))} \bigl(\| \hh \|_{\rL^p(0,T;\rW^{1,q}(\Omega))} + \| h_0^* \|_{\rL^p(0,T;\rW^{1,q}(\Omega))}\bigr)\\
        &\le C \cdot \| \Id_2 - \nablaH Y \|_{\rL^\infty(0,T;\rW^{1,q}(\Omega))}\bigl(T^{\nicefrac{1}{p}} \cdot \| \hh \|_{\rL^\infty(0,T;\rW^{1,q}(\Omega))} + \| u_0^* \|_{\bE_1}\bigr)
        \le C T^{\nicefrac{1}{p'}}\bigl(T^{\nicefrac{1}{p}} R + C_T^*\bigr).
    \end{aligned}
\end{equation*}
Analogously, we derive an estimate of the term $\frac{\del_a P(\hh + h_0^*,\ha + a_0^*)}{2 \rice(\hh + h_0^*)} \sum_{j=1}^2 (\del_i Y_j - \delta_{ij}) \del_j (\ha + a_0^*)$.

The assertion of the proposition then follows for some $C_{F_1}(R,T) > 0$.
A choice of $R > 0$ sufficiently small and then $T \to 0$ imply $C_{F_1}(R,T) < \nicefrac{R}{6 \Cmr}$ in view of \autoref{rem:ref sol sea ice para-hyper}(b) and~(c), where the latter ensure $C_T^* \to 0$ and $\| u_0^* - u_0 \|_{\rBUC([0,T];\rX_\gamma)} \to 0$.
\end{proof}

The next terms to be treated are $F_2$ and $F_3$.

\begin{prop}\label{prop:self map est F_2 and F_3 sea ice para-hyper}
Let $p,q \in (1,\infty)$ such that \eqref{eq:cond p and q sea ice para-hyper} is valid, consider $\tu \coloneqq \hu + u_0^*$, where $\hu \in \prescript{}{0}{\bE_1}$ and~$\| \hu \|_{\bE_1} \le R$, and $u_0^*$ represents the reference solution from \autoref{prop:ref sol sea ice para-hyper}.
Moreover, assume that~$\fgr$ fulfills \autoref{ass:external forcing terms}(c).
Then for the $T$-independent maximal regularity constant $\Cmr > 0$  from \autoref{cor:max reg sea ice para-hyper}, there are $C_{F_2}(R,T)$, $C_{F_3}(R,T) > 0$ with $C_{F_2}(R,T)$, $C_{F_3}(R,T) < \nicefrac{R}{6 \Cmr}$, for $R > 0$ and $T > 0$ sufficiently small, and $\| F_2(\hu) \|_{\rL^p(0,T;\rW^{1,q}(\Omega))} \le C_{F_2}(R,T)$ and $\| F_3(\hu) \|_{\rL^p(0,T;\rW^{1,q}(\Omega))} \le C_{F_3}(R,T)$.
\end{prop}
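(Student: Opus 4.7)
The plan is to estimate $F_2$ (and analogously $F_3$) term by term in $\rL^p(0,T;\rW^{1,q}(\Omega))$, exploiting the fact that condition \eqref{eq:cond p and q sea ice para-hyper} implies $q > 2$, so $\rW^{1,q}(\Omega)$ is a Banach algebra, and that $\bE_1^h = \rW^{1,p}(0,T;\rW^{1,q}(\Omega)) = \bE_1^a$ coincides with $\rL^p(0,T;\rX_0^{h/a})$ up to time regularity. The key idea is that every single term in $F_2(\hu)$ carries a small factor in $T$ (or $C_T^*$, or $\|u_0^* - u_0\|_{\rBUC([0,T];\rX_\gamma)}$), so the overall bound can be made strictly smaller than $R/(6\Cmr)$ by choosing $R$ and then $T$ sufficiently small.

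The first move is to rewrite the leading two summands as
\begin{equation*}
h_0 \divH \tvice - \th \sum_{j,k=1}^2 (\del_j Y_k) \del_k \tv_{\ice,j} = (h_0 - \th)\divH \tvice + \th \sum_{j,k=1}^2(\delta_{jk} - \del_j Y_k)\del_k \tv_{\ice,j}.
\end{equation*}
For the first summand, the Banach algebra property of $\rW^{1,q}(\Omega)$ and the splitting $h_0 - \th = (h_0 - h_0^*) - \hh$ combined with \autoref{rem:ref sol sea ice para-hyper}(c) and the time trace embedding of \autoref{lem:aux results sea ice para-hyper}(a)--(c), which yields $\|\hh\|_{\rL^\infty(0,T;\rW^{1,q}(\Omega))} \le C T^{\nicefrac{1}{p'}} R$, produce a bound of the form $C(T^{\nicefrac{1}{p'}}R + \|h_0 - h_0^*\|_{\rBUC([0,T];\rW^{1,q}(\Omega))})(R + C_T^*)$, which tends to $0$ as $T \to 0$. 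For the second summand, the Banach algebra property together with \autoref{lem:difference est of Id and nablaH Y}(a), giving $\|\Id_2 - \nablaH Y\|_{\rL^\infty(0,T;\rW^{1,q}(\Omega))} \le C T^{\nicefrac{1}{p'}}$, and the $\bE_1^v$-bound on $\tvice$ provide an estimate by $C T^{\nicefrac{1}{p'}}(R + C_T^*)^2$. The linear damping term $\omega \th = \omega \hh + \omega h_0^*$ is treated via \autoref{lem:aux results sea ice para-hyper}(c), which gives $\|\omega \hh\|_{\rL^p(0,T;\rW^{1,q}(\Omega))} \le C\omega T R$ and $\|\omega h_0^*\|_{\rL^p(0,T;\rW^{1,q}(\Omega))} \le C\omega T^{\nicefrac{1}{p}}(C_0 + C_T^*)$ via \eqref{eq:est BUC Xgamma by init data and max reg space sea ice para-hyper}.

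The remaining work is the source term $\Sh(\th,\ta)$. Since \autoref{lem:final sol contained open set V sea ice para-hyper} guarantees $\th(t) > \kappa$ and $\ta(t) \in (0,1)$ with $\ta$ uniformly bounded away from $0$ and $1$ on $[0,T]$, the function $\fgr(\th/\ta) \in \rC_{\rb}^1([0,\infty))$ composes with the $\rL^\infty(0,T;\rW^{1,q}(\Omega))$-bounded argument to yield a Lipschitz Nemytskii-type bound $\|\Sh(\th,\ta)\|_{\rL^\infty(0,T;\rW^{1,q}(\Omega))} \le C(R, C_0, C_T^*)$, and \autoref{lem:aux results sea ice para-hyper}(c) then gives the desired smallness $\|\Sh(\th,\ta)\|_{\rL^p(0,T;\rW^{1,q}(\Omega))} \le C T^{\nicefrac{1}{p}}$. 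The analogous estimate for $F_3$ proceeds identically, with the one subtlety that $\Sa$ involves case distinctions on the sign of $\fgr(0)$ and on the sign of $\Sh$. The first case split is harmless because it is a pointwise-in-scenario constant. For the second, one uses that $[\Sh]_{-} = \min(\Sh,0)$ inherits $\rW^{1,q}$-regularity from $\Sh$ as a 1-Lipschitz composition, and that $\nicefrac{a}{2h}$ is bounded in $\rW^{1,q}(\Omega)$ uniformly on $[0,T]$ thanks to $\th > \kappa$.

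Assembling these estimates, one obtains $\|F_2(\hu)\|_{\rL^p(0,T;\rW^{1,q}(\Omega))} \le C_{F_2}(R,T)$ and $\|F_3(\hu)\|_{\rL^p(0,T;\rW^{1,q}(\Omega))} \le C_{F_3}(R,T)$ for quantities involving positive powers of $T$ and of $C_T^*$ together with at worst quadratic dependence on $R$. Choosing first $R > 0$ small enough to dominate the quadratic $R^2$ terms by $\nicefrac{R}{18\Cmr}$, and then shrinking $T$ so that the remaining $T^\beta$- and $C_T^*$-contributions lie below $\nicefrac{R}{18\Cmr}$ each, yields $C_{F_2}(R,T),\, C_{F_3}(R,T) < \nicefrac{R}{6\Cmr}$ as required. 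The main obstacle is the careful bookkeeping in the source term $\Sa$ with its sign-dependent definition: one must check that the resulting expression is controllable in $\rW^{1,q}(\Omega)$ uniformly in $t$, which relies on the observation that although $\Sa$ is only piecewise smooth in $\Sh$, it is globally Lipschitz and vanishes at the transition $\Sh = 0$, so the $\rW^{1,q}$-estimate survives via the chain rule for Lipschitz superpositions.
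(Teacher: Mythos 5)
Your proposal is correct and follows essentially the same route as the paper's proof: the same decomposition of $h_0 \divH \tvice - \th \sum_{j,k}(\del_j Y_k)\del_k \tv_{\ice,j}$ into the pieces $(h_0-h_0^*)\divH\tvice$, $\hh\,\divH\tvice$ and $\th\sum_{j,k}(\delta_{jk}-\del_j Y_k)\del_k \tv_{\ice,j}$, the same use of the Banach algebra structure of $\rW^{1,q}(\Omega)$, of \autoref{lem:difference est of Id and nablaH Y} and of \autoref{lem:aux results sea ice para-hyper}, and the same final choice of first $R$ and then $T$ small so that all contributions (powers of $T$, $C_T^*$ and $\| u_0^* - u_0 \|_{\rBUC([0,T];\rX_\gamma)}$) fall below $\nicefrac{R}{6\Cmr}$. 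The only minor bookkeeping point is that the prefactor in your bound for the third piece should be $(R + C_0 + C_T^*)(R + C_T^*)$ rather than $(R + C_T^*)^2$, since the sup-norm of $h_0^*$ is controlled by $C_0 + C_T^*$ via \eqref{eq:est BUC Xgamma by init data and max reg space sea ice para-hyper} -- harmless because of the factor $T^{\nicefrac{1}{p'}}$ -- while your explicit discussion of the piecewise-defined $\Sa$ (via the Lipschitz truncation of $\Sh$) is a detail the paper leaves implicit.
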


\begin{proof}
Conceptually, the proof parallels the one of \autoref{prop:self map est F_1 sea ice para-hyper}, and we often exploit the Banach algebra structure of $\rW^{1,q}(\Omega)$.
Thanks to $\omega > 0$ being fixed and \autoref{lem:aux results sea ice para-hyper}(c) and~(a), we first obtain
\begin{equation*}
    \| \omega \th \|_{\rL^p(0,T;\rW^{1,q}(\Omega))} \le C\bigl(T^{\nicefrac{1}{p}} \cdot \| \hu \|_{\rBUC([0,T];\rX_\gamma)} + C_T^*\bigr) \le C\bigl(T^{\nicefrac{1}{p}} R + C_T^*\bigr).
\end{equation*}
With regard to \autoref{ass:external forcing terms}(c) as well as \autoref{lem:aux results sea ice para-hyper}(c) and~(a), we then get
\begin{equation*}
    \begin{aligned}
        \| \Sh(\th,\ta) \|_{\rL^p(0,T;\rW^{1,q}(\Omega))}
        &\le \biggl\| \fgr\biggl(\frac{\th}{\ta}\biggr) \biggr\|_{\rL^\infty(0,T;\rW^{1,q}(\Omega))} \cdot \| \ha + a_0^* \|_{\rL^p(0,T;\rW^{1,q}(\Omega))}\\
        &\quad + \| \fgr(0) \|_{\rL^p(0,T;\rW^{1,q}(\Omega))} + \| \fgr(0)(\ha + a_0^*) \|_{\rL^p(0,T;\rW^{1,q}(\Omega))}\\
        &\le C\bigl(T^{\nicefrac{1}{p}} + \| \ha \|_{\rL^p(0,T;\rW^{1,q}(\Omega))} + \| a_0^* \|_{\rL^p(0,T;\rW^{1,q}(\Omega))}\bigr)\\
        &\le C\bigl(T^{\nicefrac{1}{p}}(1 + \| \hu \|_{\rBUC([0,T];\rX_\gamma)}) + C_T^*\bigr)
        \le C\bigl(T^{\nicefrac{1}{p}}(1 + R) + C_T^*\bigr).
    \end{aligned}
\end{equation*}

Hence, it remains to estimate the term $h_0 \divH \tvice - \th \sum_{j,k=1}^2 (\del_j Y_k) \del_k \tv_{\ice,j}$.
We treat this term by the addition and subtraction of the intermediate terms $(\hh + h_0^*) \divH(\hvice + \vii^*)$ as well as $h_0^* \divH(\hvice + \vii^*)$.
In that respect, we get
\begin{equation*}
    \begin{aligned}
        \| (h_0 - h_0^*) \divH (\hvice + \vii^*) \|_{\rL^p(0,T;\rW^{1,q}(\Omega))}
        &\le C \cdot \| h_0 - h_0^* \|_{\rBUC([0,T];\rW^{1,q}(\Omega))} \| \hvice + \vii^* \|_{\rL^p(0,T;\rW^{2,q}(\Omega))}\\
        &\le C \cdot \| u_0 - u_0^* \|_{\rBUC([0,T];\rX_\gamma)}(R + C_T^*).
    \end{aligned}
\end{equation*}
Concerning $h_0^* \divH(\hvice + \vii^*) - (\hh + h_0^*) \divH(\hvice + \vii^*) = \hh \divH(\hvice + \vii^*)$, \autoref{lem:aux results sea ice para-hyper}(a) yields
\begin{equation*}
    \begin{aligned}
        \| \hh \divH(\hvice + \vii^*) \|_{\rL^p(0,T;\rW^{1,q}(\Omega))}
        &\le C \cdot \| \hh \|_{\rL^\infty(0,T;\rW^{1,q}(\Omega))}(\| \hvice \|_{\rL^p(0,T;\rW^{2,q}(\Omega))} + \| \vii^* \|_{\rL^p(0,T;\rW^{2,q}(\Omega))})\\
        &\le C \cdot \| \hu \|_{\rBUC([0,T];\rX_\gamma)}\bigl(\| \hu \|_{\bE_1} + \| u_0^* \|_{\bE_1}\bigr)
        \le C R(R + C_T^*).
    \end{aligned}
\end{equation*}
The last term in the context of $F_2$ can be written as
\begin{equation*}
    \th \divH \tvice - \th \sum_{j,k=1}^2 (\del_j Y_k) \del_k (\hvice + \vii^*)_j = (\hh + h_0^*) \sum_{j,k=1}^2 (\delta_{jk} - \del_j Y_k) \del_k (\hvice + \vii^*)_j.
\end{equation*}
For its treatment, we make use of \autoref{lem:difference est of Id and nablaH Y}, the shape of $\rX_\gamma$ as indicated in \eqref{eq:trace space sea ice para-hyper}, \autoref{lem:aux results sea ice para-hyper}(a) and~\eqref{eq:est BUC Xgamma by init data and max reg space sea ice para-hyper} to derive
\begin{equation*}
    \begin{aligned}
        &\quad \left\| (\hh + h_0^*) \sum_{j,k=1}^2 (\delta_{jk} - \del_j Y_k) \del_k (\hvice + \vii^*)_j \right\|_{\rL^p(0,T;\rW^{1,q}(\Omega))}\\
        &\le C \cdot \| \hh + h_0^* \|_{\rL^\infty(0,T;\rW^{1,q}(\Omega))} \cdot \| \Id_2 - \nablaH Y \|_{\rL^\infty(0,T;\rW^{1,q}(\Omega))} \cdot \| \hvice + \vii^* \|_{\rL^p(0,T;\rW^{2,q}(\Omega))}\\
        &\le C T^{\nicefrac{1}{p'}} \bigl(\| \hu \|_{\rBUC([0,T];\rX_\gamma)} + \| u_0^* \|_{\rBUC([0,T];\rX_\gamma)}\bigr) \bigl(\| \hu \|_{\bE_1} + \| u_0^* \|_{\bE_1}\bigr)
        \le C T^{\nicefrac{1}{p'}}(R + C_0 + C_T^*) (R + C_T^*).
    \end{aligned}
\end{equation*}
The assertion of the proposition follows from a combination of the previous estimates and the observation that $C_T^* \to 0$ and $\| u_0^* - u_0 \|_{\rBUC([0,T];\rX_\gamma)} \to 0$ as $T \to 0$ by \autoref{rem:ref sol sea ice para-hyper}(b) and~(c).
\end{proof}

For completeness, we provide the differences of $F_1$, $F_2$ and $F_3$.
In fact, for $\hu_i = \tu_i - u_0^*$, $i=1,2$, we get
\begin{equation*}
    \begin{aligned}
        &F_1(\hu_1) - F_1(\hu_2)
        = \bigl(\tbAH(\tu_1) - \tbAH(\tu_2)\bigr) \tv_{\ice,1} - \bigl(\tB(\tu_1) - \tB(\tu_2)\bigr) \binom{\th_1}{\ta_1}\\
        &\quad + \bigl(\tbAH(\tu_2) - \bAH(u_0)\bigr)(\hv_{\ice,1} - \hv_{\ice,2}) -\bigl(\tB(\tu_2) - B_1(u_0)\bigr) \binom{\hh_1 - \hh_2}{\ha_1 - \ha_2} + \omega (\hv_{\ice,1} - \hv_{\ice,2})\\
        &\quad - \ccor (\hv_{\ice,1} - \hv_{\ice,2})^\perp + \biggl(\frac{1}{\rice \th_1} - \frac{1}{\rice \th_2}\biggr)\bigl(\tatm + \tocn(\tv_{\ice,1})\bigr) + \frac{1}{\rice \th_2}\bigl(\tocn(\tv_{\ice,1}) - \tocn(\tv_{\ice,2})\bigr),
    \end{aligned}
\end{equation*}
\begin{equation*}
    \begin{aligned}
        F_2(\hu_1) - F_2(\hu_2)
        &= -\sum_{j,k=1}^2 \bigl[\th_1(\del_j Y_k^{(1)}) - \th_2 (\del_j Y_k^{(2)})\bigr] \del_k (\tv_{\ice,1})_j -\biggl[\th_2 \sum_{j,k=1}^2 (\del_j Y_k^{(2)}) \del_k (\hv_{\ice,1} - \hv_{\ice,2})_j\\
        &\qquad - h_0 \divH(\hv_{\ice,1} - \hv_{\ice,2}) \biggr] + \omega (\hh_1 - \hh_2) + \Sh(\th_1,\ta_1) - \Sh(\th_2,\ta_2) \tand
    \end{aligned}
\end{equation*}
\begin{equation*}
    \begin{aligned}
        F_3(\hu_1) - F_3(\hu_2)
        &= -\sum_{j,k=1}^2 \bigl[\ta_1(\del_j Y_k^{(1)}) - \ta_2 (\del_j Y_k^{(2)})\bigr] \del_k (\tv_{\ice,1})_j -\biggl[\ta_2 \sum_{j,k=1}^2 (\del_j Y_k^{(2)}) \del_k (\hv_{\ice,1} - \hv_{\ice,2})_j\\
        &\qquad- a_0 \divH(\hv_{\ice,1} - \hv_{\ice,2}) \biggr] + \omega (\ha_1 - \ha_2) + \Sa(\th_1,\ta_1) - \Sa(\th_2,\ta_2).
    \end{aligned}
\end{equation*}

The lemma below discusses the Lipschitz continuity of $F_1$ from \eqref{eq:right-hand sides sea ice para-hyper}.
It can be obtained in a similar way as \autoref{prop:self map est F_1 sea ice para-hyper} when replacing the estimates from \autoref{lem:difference est of Id and nablaH Y}(a) by the (b)-part in order to estimate the difference in the diffeomorphism.
We also refer here to \cite[Lemma~6.4.8]{Bra:24} for details.

\begin{prop}\label{prop:Lipschitz est F_1 sea ice para-hyper}
Let $p,q \in (1,\infty)$ satisfy \eqref{eq:cond p and q sea ice para-hyper}, and let $\tu_i \coloneqq \hu_i + u_0^*$, $i = 1,2$, where $\hu_i \in \prescript{}{0}{\bE_1}$ with~$\| \hu_i \|_{\bE_1} \le R$, and $u_0^*$ denotes the reference solution from \autoref{prop:ref sol sea ice para-hyper}.
In addition, suppose that $\Vatm$, $\Vocn$, $\nablaH H$ and $\fgr$ fulfill \autoref{ass:external forcing terms}.
Then there exists $L_{F_1}(R,T) > 0$ with $L_{F_1}(R,T) \to 0$ as $R \to 0$ and $T \to 0$ as well as $\| F_1(\hu_1) - F_1(\hu_2) \|_{\rL^p(0,T;\rL^q(\Omega))} \le L_{F_1}(R,T) \cdot \| \hu_1 - \hu_2 \|_{\bE_1}$.
\end{prop}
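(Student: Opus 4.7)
My plan is to exploit the explicit decomposition of $F_1(\hu_1) - F_1(\hu_2)$ already displayed in the excerpt, and to estimate each summand separately by mirroring the arguments used in the proof of \autoref{prop:self map est F_1 sea ice para-hyper}, but now replacing every appeal to \autoref{lem:difference est of Id and nablaH Y}(a) with its difference-version \autoref{lem:difference est of Id and nablaH Y}(b), so that each $T^{\nicefrac{1}{p'}}$-factor is multiplied by $\| \hu_1 - \hu_2 \|_{\bE_1}$ instead of a single $R$. In addition, I would rely on the embedding $\prescript{}{0}{\bE_1} \hookrightarrow \rBUC([0,T];\rX_\gamma)$ from \autoref{lem:aux results sea ice para-hyper}(a) (with $T$-independent constant), on the Banach algebra structure of $\rW^{1,q}(\Omega)$ (valid since $q > 2$), and on the uniform bound $\tu_i(t) \in V$ from \autoref{lem:final sol contained open set V sea ice para-hyper}.

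The lower-order, non-quasilinear terms are routine. The zero-order contribution $\omega(\hv_{\ice,1} - \hv_{\ice,2}) - \ccor(\hv_{\ice,1} - \hv_{\ice,2})^\perp$ is controlled by $T^{\nicefrac{1}{p}} \cdot \| \hu_1 - \hu_2 \|_{\bE_1}$ via \autoref{lem:aux results sea ice para-hyper}(c). For the oceanic drag, I split off a factor $\frac{1}{\rice \th_1} - \frac{1}{\rice \th_2} = \frac{\th_2 - \th_1}{\rice \th_1 \th_2}$, exploit the lower bound $\th_i > \kappa$ from \autoref{lem:final sol contained open set V sea ice para-hyper}, and bound the resulting $\rL^\infty(\rL^\infty)$-norm of $\th_1 - \th_2$ by $\| \hu_1 - \hu_2 \|_{\bE_1}$ through the trace embedding. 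The remainder $\frac{1}{\rice \th_2}(\tocn(\tv_{\ice,1}) - \tocn(\tv_{\ice,2}))$ is then a Lipschitz-type difference of a quadratic expression in $\tvice$, handled using \autoref{lem:aux results sea ice para-hyper}(b) and \autoref{ass:external forcing terms}(a) just as in the self-map estimate.

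The main obstacle is the quasilinear contribution $(\tbAH(\tu_1) - \tbAH(\tu_2)) \tv_{\ice,1}$ together with $(\tB(\tu_1) - \tB(\tu_2))\binom{\th_1}{\ta_1}$, since here the difference enters through three distinct channels: through the coefficients $a_{ij}^{klm}$ and $P$ (nonlinear, but smooth in $\eps$, $h$, $a$ thanks to the regularization by $\delta > 0$), through the factor $\del_j Y_k^{(1)} - \del_j Y_k^{(2)}$ in the transformed principal part and in $\tB$, and through the second-order derivatives inside $\del_m \teps_{jl}(\tv_{\ice,1})$. For the first channel I would apply the mean value theorem to the regularized coefficients, reducing their difference in $\rL^\infty(\rL^\infty)$ to $\| \teps(\tv_{\ice,1}) - \teps(\tv_{\ice,2}) \|_{\rL^\infty(\rL^\infty)} + \| \th_1 - \th_2 \|_{\rL^\infty(\rL^\infty)} + \| \ta_1 - \ta_2 \|_{\rL^\infty(\rL^\infty)}$, each of which is controlled by $\| \hu_1 - \hu_2 \|_{\bE_1}$ via the trace embedding (combined with \autoref{lem:props of the trafo Euler Lagrange sea ice para-hyper}(b) for the part coming from $\nablaH Y^{(i)} - \Id_2$). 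For the second channel, \autoref{lem:difference est of Id and nablaH Y}(b) directly yields an $\rL^\infty(\rW^{1,q})$-estimate with the prefactor $C T^{\nicefrac{1}{p'}} \cdot \| \hu_1 - \hu_2 \|_{\bE_1}$. For the third channel, the second derivatives $\del_m \teps_{jl}(\tv_{\ice,1})$ are bounded in $\rL^p(\rL^q)$ by $R + C_T^*$, so the product structure closes via H\"older and the Banach algebra property of $\rW^{1,q}$.

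Combining these pieces with the analogous estimate for $(\tbAH(\tu_2) - \bAH(u_0))(\hv_{\ice,1} - \hv_{\ice,2})$ and $(\tB(\tu_2) - B_1(u_0))\binom{\hh_1 - \hh_2}{\ha_1 - \ha_2}$, which are treated exactly like the corresponding self-map terms in \autoref{prop:self map est F_1 sea ice para-hyper} (after noting that $\| \tu_2 - u_0 \|_{\rBUC([0,T];\rX_\gamma)} \le C R + \| u_0^* - u_0 \|_{\rBUC([0,T];\rX_\gamma)}$, which tends to $0$ as $R, T \to 0$ by \autoref{rem:ref sol sea ice para-hyper}(c)), I obtain a constant $L_{F_1}(R,T) > 0$ of the form $C(T^\beta + R + \| u_0^* - u_0 \|_{\rBUC([0,T];\rX_\gamma)} + C_T^*)$ for some $\beta > 0$. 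Since every summand tends to $0$ as $R \to 0$ and $T \to 0$, we achieve $L_{F_1}(R,T) \to 0$ in this regime, which establishes the claimed Lipschitz bound.
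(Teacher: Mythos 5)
Your proposal is correct and follows essentially the same route as the paper, which proves this proposition precisely by repeating the self-map argument of \autoref{prop:self map est F_1 sea ice para-hyper} with the estimates of \autoref{lem:difference est of Id and nablaH Y}(a) replaced by the difference estimates in part (b), together with the displayed decomposition of $F_1(\hu_1)-F_1(\hu_2)$. The extra details you supply (mean value theorem for the regularized coefficients, the split $\frac{1}{\rice\th_1}-\frac{1}{\rice\th_2}=\frac{\th_2-\th_1}{\rice\th_1\th_2}$, and tracking the small factors $T^{\beta}$, $R$, $C_T^*$, $\|u_0^*-u_0\|_{\rBUC([0,T];\rX_\gamma)}$ to get $L_{F_1}(R,T)\to 0$) are consistent with the paper's intended argument.
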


Next, we state the Lipschitz estimates of the terms $F_2$ and $F_3$.
They can also be derived in a similar manner as \autoref{prop:self map est F_2 and F_3 sea ice para-hyper}, see also \cite[Lemma~6.4.9]{Bra:24}.

\begin{prop}\label{prop:Lipschitz est F_2 and F_3 sea ice para-hyper}
Let $p,q \in (1,\infty)$ fulfill \eqref{eq:cond p and q sea ice para-hyper}, and consider $\tu_i \coloneqq \hu_i + u_0^*$, $i = 1,2$, with $\hu_i \in \prescript{}{0}{\bE_1}$ and $\| \hu_i \|_{\bE_1} \le R$, and $u_0^*$ represents the reference solution from \autoref{prop:ref sol sea ice para-hyper}.
Moreover, assume that $\Vatm$, $\Vocn$, $\nablaH H$ and $\fgr$ satisfy \autoref{ass:external forcing terms}.
Then there are $L_{F_2}(R,T) > 0$ and $L_{F_3}(R,T) > 0$ with $L_{F_j}(R,T) \to 0$, $j=2,3$, as $R \to 0$ and $T \to 0$, $ \| F_2(\hu_1) - F_2(\hu_2) \|_{\rL^p(0,T;\rW^{1,q}(\Omega))} \le L_{F_2}(R,T) \cdot \| \hu_1 - \hu_2 \|_{\bE_1}$ as well as $\| F_3(\hu_1) - F_3(\hu_2) \|_{\rL^p(0,T;\rW^{1,q}(\Omega))} \le L_{F_3}(R,T) \cdot \| \hu_1 - \hu_2 \|_{\bE_1}$.
\end{prop}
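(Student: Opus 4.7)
The plan is to mimic the self-map estimate in Proposition~\ref{prop:self map est F_2 and F_3 sea ice para-hyper}, but systematically tracking a single factor containing a difference $\hu_1 - \hu_2$ or $Y^{(1)} - Y^{(2)}$ in each term, while bounding the remaining factors using the uniform bounds available on balls of radius $R$ in $\prescript{}{0}{\bE_1}$. The Banach algebra structure of $\rW^{1,q}(\Omega)$ for $q > 2$ together with \autoref{lem:props of the trafo Euler Lagrange sea ice para-hyper} and \autoref{lem:difference est of Id and nablaH Y}(b) are the main tools; the trace embedding from \autoref{lem:aux results sea ice para-hyper}(a) provides the $T$-independent control of $\hu_i$ in $\rBUC([0,T];\rX_\gamma)$.

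Concretely, starting from the explicit expression for $F_2(\hu_1) - F_2(\hu_2)$, the linear terms $\omega(\hh_1 - \hh_2)$ are immediate via \autoref{lem:aux results sea ice para-hyper}(c), picking up a factor $T^{\nicefrac{1}{p}}$. For the thermodynamic term $\Sh(\th_1,\ta_1) - \Sh(\th_2,\ta_2)$ I would use $\fgr \in \rC_\rb^1([0,\infty))$ and the fact that $\th_i \ge \kappa$ and $\ta_i \in (0,1)$ by \autoref{lem:final sol contained open set V sea ice para-hyper} to write the difference as an integral of the derivative of $\Sh$ along the segment between $(\th_1,\ta_1)$ and $(\th_2,\ta_2)$; combining with the Banach algebra property of $\rW^{1,q}(\Omega)$ gives a bound of the form $C T^{\nicefrac{1}{p}} \| \hu_1 - \hu_2 \|_{\bE_1}$. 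The bracket $[\th_2 \sum (\del_j Y_k^{(2)}) \del_k(\hv_{\ice,1} - \hv_{\ice,2})_j - h_0 \divH(\hv_{\ice,1} - \hv_{\ice,2})]$ is handled exactly as the analogous self-map term: split $h_0 = (h_0 - h_0^*) + (h_0^* - \th_2) + \th_2$ and write $\delta_{jk} = (\delta_{jk} - \del_j Y_k^{(2)}) + \del_j Y_k^{(2)}$, so that each summand contains either a small factor ($\| u_0 - u_0^* \|_{\rBUC([0,T];\rX_\gamma)} \to 0$, a norm of $\hu_2$ bounded by $R$, or $\| \Id_2 - \nablaH Y^{(2)} \|_{\rL^\infty(0,T;\rW^{1,q}(\Omega))} \le C T^{\nicefrac{1}{p'}}$) multiplied by $\| \hv_{\ice,1} - \hv_{\ice,2} \|_{\rL^p(0,T;\rW^{2,q}(\Omega))} \le \| \hu_1 - \hu_2 \|_{\bE_1}$.

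The main obstacle is the first term $\th_1(\del_j Y_k^{(1)}) - \th_2(\del_j Y_k^{(2)})$, because here both the thickness and the inverse transformation differ. I would telescope as
\begin{equation*}
    \th_1 (\del_j Y_k^{(1)}) - \th_2(\del_j Y_k^{(2)}) = (\hh_1 - \hh_2)(\del_j Y_k^{(1)}) + \th_2 \del_j(Y_k^{(1)} - Y_k^{(2)}),
\end{equation*}
then estimate the first summand by $\| \hh_1 - \hh_2 \|_{\rL^\infty(0,T;\rW^{1,q}(\Omega))} \| \nablaH Y^{(1)} \|_{\rL^\infty(0,T;\rW^{1,q}(\Omega))} \| \del_k(\tv_{\ice,1})_j \|_{\rL^p(0,T;\rW^{1,q}(\Omega))}$, which is bounded by $C(R + C_T^*)\| \hu_1 - \hu_2 \|_{\bE_1}$ after pulling out a factor $T^{\nicefrac{1}{p}}$ via \autoref{lem:aux results sea ice para-hyper}(c) from the first factor and using $\prescript{}{0}{\bE_1} \hookrightarrow \rBUC([0,T];\rX_\gamma)$ with $T$-independent constant, and the second summand by \autoref{lem:difference est of Id and nablaH Y}(b), yielding the decisive $C T^{\nicefrac{1}{p'}}(R + C_T^*) \| \hu_1 - \hu_2 \|_{\bE_1}$. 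Summing all contributions gives $L_{F_2}(R,T)$ as a sum of terms each proportional to a positive power of $T$ or to $R + C_T^* + \| u_0 - u_0^* \|_{\rBUC([0,T];\rX_\gamma)}$, hence $L_{F_2}(R,T) \to 0$ as $R \to 0$ and $T \to 0$ by \autoref{rem:ref sol sea ice para-hyper}.

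The estimate of $F_3(\hu_1) - F_3(\hu_2)$ in the same norm is structurally identical: the only change is replacing $h$ by $a$ and $\Sh$ by $\Sa$. For $\Sa$, one splits into the four branches according to the sign of $\fgr(0)$ and of $\Sh$; on each branch $\Sa$ is a smooth function of $(\th,\ta)$ on the closed region $\{h \ge \kappa, a \in [0,1]\}$ (with the bound $\th \ge \kappa$ ensuring $\nicefrac{1}{2\th}$ is bounded), so the same Lipschitz argument via the mean value theorem and the Banach algebra structure of $\rW^{1,q}(\Omega)$ applies. This produces $L_{F_3}(R,T)$ with the same decay properties, completing the proof.
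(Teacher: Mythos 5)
Your proposal is correct and follows essentially the route the paper intends: it mimics the self-map estimates of \autoref{prop:self map est F_2 and F_3 sea ice para-hyper} term by term on the displayed difference formula, replacing \autoref{lem:difference est of Id and nablaH Y}(a) by part (b) where the two diffeomorphisms differ, with the natural telescoping $\th_1\del_j Y_k^{(1)}-\th_2\del_j Y_k^{(2)}=(\hh_1-\hh_2)\del_j Y_k^{(1)}+\th_2\,\del_j(Y_k^{(1)}-Y_k^{(2)})$ and the same splitting of $h_0$ and $\delta_{jk}$ as in the self-map case. Only cosmetic quibbles remain (the factor extracted from $\hh_1-\hh_2$ should be $T^{\nicefrac{1}{p'}}$ via \autoref{lem:aux results sea ice para-hyper}(c) rather than $T^{\nicefrac{1}{p}}$, and the piecewise-defined $\Sa$ should be treated as a globally Lipschitz function of $(\th,\ta)$ rather than branchwise, since the segment between two states may cross the switching set), neither of which affects the argument.
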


\subsection{Proof of \autoref{thm:local strong wp Lagrangian coords} and \autoref{thm:local strong wp sea ice para-hyper}}\label{ssec:proof of main results}
\ 

This final subsection presents the proofs of the main theorem in Lagrangian and Eulerian coordinates.

\begin{proof}[Proof of \autoref{thm:local strong wp Lagrangian coords}]
We recall $\cK_T^R$ and $\Phi_T^R$ from \eqref{eq:set and map fixed point arg sea ice para-hyper}.
For $\hu \in \cK_T^R$, the maximal regularity from \autoref{cor:max reg sea ice para-hyper} and the self-map estimates from \autoref{prop:self map est F_1 sea ice para-hyper} and \autoref{prop:self map est F_2 and F_3 sea ice para-hyper} imply that
\begin{equation*}
    \| \Phi_T^R(\hu) \|_{\bE_1} \le \Cmr \cdot \| (F_1(\hu),F_2(\hu),F_3(\hu)) \|_{\bE_0} \le \Cmr \left(\frac{R}{6 \Cmr} + \frac{R}{6 \Cmr} + \frac{R}{6 \Cmr}\right) \le \frac{R}{2}
\end{equation*}
for $R > 0$ and $T > 0$ sufficiently small by virtue of the $T$-independence of $\Cmr > 0$.
Furthermore, from \autoref{prop:Lipschitz est F_1 sea ice para-hyper} and \autoref{prop:Lipschitz est F_2 and F_3 sea ice para-hyper}, we deduce the existence of a Lipschitz constant $L(R,T) > 0$ such that $L(R,T) \to 0$ as $R \to 0$ and $T \to 0$, and
\begin{equation*}
    \| \Phi_T^R(\hu_1) - \Phi_T^R(\hu_2) \|_{\bE_1} \le \Cmr L(R,T) \cdot \| \hu_1 - \hu_2 \|_{\bE_1} \le \frac{1}{2} \cdot \| \hu_1 - \hu_2 \|_{\bE_1}
\end{equation*}
for $\hu_1$, $\hu_2 \in \cK_T^R$ as well as $R > 0$ and $T > 0$ sufficiently small.
In total, $\Phi_T^R$ is a self-map and contraction on $\cK_T^R$, yielding the existence of a unique fixed point $u_* \in \prescript{}{0}{\bE_1}$ of \eqref{eq:linearized eq for fixed point sea ice para-hyper}.
In other words, \eqref{eq:transformed parabolic-hyperbolic regularized model} admits a unique solution $u_* \coloneqq \hu + u_0^*$.
Together with \autoref{lem:aux results sea ice para-hyper}(a), we obtain
\begin{equation*}
    \tu \in \bE_1 = \rW^{1,p}(0,T;\rX_0) \cap \rL^p(0,T;\rX_1) \hookrightarrow \rBUC([0,T];\rX_\gamma).
\end{equation*}
Finally, it is a consequence of \autoref{lem:final sol contained open set V sea ice para-hyper} that $\tu(t) \in V$ for all $t \in [0,T]$, completing the proof.
\end{proof}

The last step is to deduce \autoref{thm:local strong wp sea ice para-hyper} from \autoref{thm:local strong wp Lagrangian coords}.

\begin{proof}[Proof of \autoref{thm:local strong wp sea ice para-hyper}]
Because of $T \le T_1$, we argue that $X(t,\cdot)$ is in particular invertible on $\oOmega$ with inverse $Y(t,\cdot) = [X(t,\cdot)]^{-1}$.
From the representation of~$X$ in \eqref{eq:diffeo X Euler Lagrange sea ice para-hyper} as well as the regularity of $\tvice$ in \autoref{thm:local strong wp Lagrangian coords}, it follows that $X \in \rW^{2,p}(0,T;\rL^q(\Omega)^2) \cap \rW^{1,p}(0,T;\rW^{2,q}(\Omega)^2)$.
This is also valid for the inverse $Y$.
As a result, the variables in Eulerian coordinates $u = (\vice,h,a)$ can be recovered from those in Lagrangian coordinates by $\vice(t,\xH) \coloneqq \tvice(t,Y(t,\xH))$, $h(t,\xH) \coloneqq \th(t,Y(t,\xH))$ as well as $a(t,\yH) \coloneqq \ta(t,Y(t,\xH))$.
The proof is completed upon observing that the change of coordinates does not affect the regularity properties, and it has no influence on the property $u \in \rC([0,T];V)$ either, while the uniqueness is implied by the uniqueness of the fixed point joint with the uniqueness of the transform.
\end{proof}

We conclude this section by proving the continuous dependence on the initial data as asserted in \autoref{cor:cont dep init data} as well as the blow-up criterion from \autoref{cor:blow-up crit}.

\begin{proof}[Proof of \autoref{cor:cont dep init data}]
This result can be obtained by a slight modification of the fixed point argument.
Indeed, for $u_0^*$ denoting the reference solution associated to initial value $u_0$ and resulting from \autoref{prop:ref sol sea ice para-hyper} as well as $T > 0$ and $R > 0$, we set $\cK_{T,u_i}^R \coloneqq \left\{\overline{u} \in \bE_1 : \left. \overline{u}\right|_{t=0} = u_i \tand \| \overline{u} - u_0^* \|_{\bE_1} \le r\right\}$, $i=1,2$.
Moreover, we define $\Phi_{T,u_i}^R \colon \cK_{T,u_i}^R \to \bE_1$, $\Phi_{T,u_i}^R(\overline{u}) = \hu$ to be the solution to the linearized problem as in \eqref{eq:linearized eq for fixed point sea ice para-hyper}, with right-hand sides $F_1(\overline{u})$, $F_2(\overline{u})$ and $F_3(\overline{u})$ as introduced in \eqref{eq:right-hand sides sea ice para-hyper}, and with initial values $u_i = (v_{\ice,i},h_i,a_i)$.
Following the proof of \cite[Theorem~2.1]{KPW:10}, and adapting the arguments from the nonlinear estimates in \autoref{ssec:ests of nonlin terms}, for $R > 0$, $T > 0$ and $r > 0$ sufficiently small, we find that
\begin{equation*}
    \| \Phi_{T,u_1}^R (\overline{u}_1) - \Phi_{T,u_2}^R (\overline{u}_2) \|_{\bE_1} \le \frac{1}{2} \cdot \| \overline{u}_1 - \overline{u}_2 \|_{\bE_1} + C \cdot \| u_1 - u_2 \|_{\rX_\gamma}
\end{equation*}
for all $u_1$, $u_2 \in \obB_{\rX_\gamma}(u_0,r)$ as well as $\overline{u}_1 \in \cK_{T,u_1}^R$ and $\overline{u}_2 \in \cK_{T,u_2}^R$.
Applying the preceding estimate to the solutions $u(\cdot,u_1)$, $u(\cdot,u_2)$, whose existence can be obtained in a similar manner as in the proof of \autoref{thm:local strong wp Lagrangian coords} upon slightly modifying the arguments, we deduce the assertion of the corollary.
\end{proof}

The second result follows similarly as \cite[Corollary~5.1.2]{PS:16}.
For convenience, we provide a short proof.

\begin{proof}[Proof of \autoref{cor:blow-up crit}]
For $u_0 \in V$, define $t_+(u_0) \coloneqq \sup\{t > 0 : \eqref{eq:transformed parabolic-hyperbolic regularized model} \text{ has a solution on } [0,t]\}$, and assume that $t_+(u_0) < \infty$, $\dist_{\rX_\gamma}(\tu(t),\del V) \ge \eta$ for some $\eta > 0$, and the solution $\tu(t)$ converges to some~$u_1 \coloneqq \tu(t_+(u_0)) \in V$ as $t \to t_+(u_0)$.
The set $\tu([0,t_+(u_0)]) \subset V$ is compact in $V$.
Observing that the maximal $\rL^p$-regularity of $A(u')$ as established in \autoref{cor:max reg sea ice para-hyper} holds for all $u' \in V \subset \rX_\gamma$ for $p$ and~$q$ satisfying \eqref{eq:cond p and q sea ice para-hyper}, we deduce from \autoref{thm:local strong wp Lagrangian coords} and its proof the existence of a uniform $\beta > 0$ such that the problem \eqref{eq:transformed parabolic-hyperbolic regularized model} with initial data $u(0) = \tu(s)$, $s \in [0,t_+(u_0)]$ admits a unique solution $\tu'$ in $\bE_1$ on the time interval $(0,\beta)$.
For fixed $s_0 \in (t_+(u_0)-\beta,t_+(u_0))$, the solution $\tu'(\tau)$ coincides with $\tu(s_0+\tau)$.
In other words, the solution $\tu$ is extended beyond $t_+(u_0)$, contradicting the definition of the latter.
\end{proof}

\section{Concluding remarks}\label{sec:outlook and open problems}

In this final section, we discuss possible extensions of the results presented in the previous sections, sketch possible difficulties and mention remaining problems.

In order to fully exploit the parabolic regularization, it would be possible to introduce time weights of the shape $t^{1-\mu}$ for $\mu \in (\nicefrac{1}{p},1]$ and to consider $\rL_\mu^p(0,T;\rX_0)$ instead of $\rL^p(0,T;\rX_0)$, where $f \in \rL_\mu^p(0,T;\rX_0)$ if and only if $t^{1-\mu} f \in \rL^p(0,T;\rX_0)$.
The spaces $\rW_\mu^{1,p}(0,T;\rX_0)$ and $\rL_\mu^p(0,T;\rX_1)$ are defined analogously.
The underlying data space then is $\bE_{0,\mu} \coloneqq \rL_\mu^p(0,T;\rX_0)$, while the associated time trace space becomes $\rX_{\gamma,\mu} \coloneqq (\rX_0,\rX_1)_{\mu-\nicefrac{1}{p},p}$, and the corresponding maximal regularity space reads as
\begin{equation*}
    \bE_{1,\mu} \coloneqq \rW_\mu^{1,p}(0,T;\rX_0) \cap \rL_\mu^p(0,T;\rX_1).
\end{equation*}
An implementation of time weights would require an adjustment of the nonlinear estimates.
For the theory of critical spaces in the context of evolution equations and weighted spaces, we also refer to the work of Pr\"uss, Simonett and Wilke \cite{PSW:18}.

The bounded $\Hinfty$-calculus of the operator matrix $A(u_1)$ from \eqref{eq:op matrix sea ice para-hyper} as proved in \autoref{thm:Hinfty sea ice para-hyper} paves the way for interesting applications such as the theory of Banach scales due to Amann \cite[Chapter~5]{Ama:95} or the consideration of the problem in the stochastic setting via stochastic maximal regularity, see also \cite{vNVW:12a} for more details on this property.

The investigation of global-in-time results as e.\ g.\ obtained in \cite[Theorem~2.3]{BDHH:22}, see also \cite[Theorem~3.6.6]{Bra:24}, is left to future study for the present problem.
In fact, there are several difficulties here.
First, the linearized operator matrix $A(u_*)$ at a constant equilibrium solution of the shape $u_* = (0,h_*,a_*) \in V$ seems to be ``more singular'' than in the fully parabolic situation.
Indeed, zero is no longer a semi-simple eigenvalue, and the lack of invertibility cannot be overcome as for instance in \cite[Section~7.2]{Bra:24} by restricting to functions with spatial average zero.
Another difficulty arises in the transfer of global-in-time results from the Lagrangian to the Eulerian setting.
For this, it is necessary to have a global-in-time Lagrangian change of coordinates at hand.
In \cite{DHMT:20}, Danchin, Hieber, Mucha and Tolksdorf address this aspect in the context of free boundary value problems in the half space by using Da Prato-Grisvard theory, allowing for maximal $\rL^1$-regularity in the critical (homogeneous) Besov space. 
For a different approach to such problems, we also refer to the work of Ogawa and Shimizu \cite{OS:24}.

One can also ask whether the results established in \cite{BH:23}, \cite{BBH:24a}, \cite{BBH:24b} remain valid in the present situation of the parabolic-hyperbolic problem.
We also leave this to future study.

\medskip 

{\bf Acknowledgements.}
The author would like to thank Tim Binz, Karoline Disser and Matthias Hieber for fruitful discussions.
Moreover, he acknowledges the support by the German National Academy of Sciences Leopoldina through the Leopoldina Fellowship Program with grant number~LPDS 2024-07 and the support by DFG project FOR~5528. 
Most of the results in this paper are contained in modified form in Chapter~6 of the PhD thesis \cite{Bra:24} of the author.

\medskip

{\bf Data Availability Statement.}
The manuscript has no associated data.

\end{document}